\newtheorem{theorem}{Theorem}
\newtheorem{remark}{Remark}
\newtheorem{proposition}{Proposition}
\newtheorem{corollary}{Corollary}
\newtheorem{question}{Problem}
\title{Representations of flat virtual braids which do not preserve the forbidden relations}
\author{V.~Bardakov \and B.~Chuzhinov \and I.~Emel'yanenkov \and  M.~Ivanov \and E.~Markhinina \and  T.~Nasybullov  \and S.~Panov  \and N.~Singh \and S.~Vasyutkin \and V.~Yakhin  \and A.~Vesnin} 
\date{~}
\begin{document}
\maketitle
\begin{abstract}
In the paper we construct a representation $\theta:FVB_n\to{\rm Aut}(F_{2n})$ of the flat virtual braid group $FVB_n$ on $n$ strands by automorphisms of the free group $F_{2n}$ with $2n$ generators which does not preserve the forbidden relations in the flat virtual braid group. This representation gives a positive answer to the problem formulated by V. Bardakov in the list of unsolved problems in virtual knot theory and combinatorial knot theory by R. Fenn, D. Ilyutko, L.~Kauffman and V. Manturov. Using this representation we construct a new group invariant for flat welded links. 

Also we find the set of normal generators of the groups $VP_n\cap H_n$ in $VB_n$, $FVP_n\cap FH_n$ in $FVB_n$, $GVP_n\cap GH_n$ in $GVB_n$, which play an important role in the study of the kernel of the representation $\theta$.

 ~\\
 \textit{Keywords:} flat virtual braids, braid-like groups, representations by automorphisms, knot invariants.
 
 ~\\
 \textit{Mathematics Subject Classification 2010:} 20F36, 20F29, 57M27, 57M25.
\end{abstract}

\section{Introduction}
The braid group $B_n$ on $n\geq2$ strands is the group with $n-1$ generators $\sigma_1,\sigma_2,\dots,\sigma_{n-1}$ and the following defining relations
\begin{align}
\label{def2}\sigma _i\sigma _j &= \sigma _j\sigma _i, && |i-j|\geq   2,\\
\label{def3}\sigma_i\sigma _{i+1}\sigma _i &= \sigma _{i+1}\sigma _i\sigma _{i+1},&& i = 1, 2, \dots, n-2. \end{align}
These groups were introduced by E.~Artin as a tool for working with classical knots and links. The famous Alexander theorem says that every classical link is equivalent to the closure of some braid, and the well-known Markov theorem describes braids which have equivalent closures.  

In 1999 L. Kauffman introduced the virtual knot theory which generalizes the classical knot theory \cite{Kau}. The virtual braid group $VB_n$ on $n\geq 2$ strands is the group which is obtained from the braid group $B_n$ by adding new generators $\rho_1,\rho_2,\dots,\rho_{n-1}$ and additional defining relations
\begin{align}
\label{def4}\rho_i^2&=1, &&i = 1, 2, \dots,n-1, \\
\label{def5}\rho _i\rho _j &= \rho _j\rho _i, && |i-j|\geq2,\\
\label{def6}\rho_i\rho _{i+1}\rho _i &= \rho _{i+1}\rho_i\rho _{i+1}, &&i = 1, 2, \dots. n-2,\\
\label{def7}\sigma _i\rho _j &= \rho _j\sigma _i, &&|i-j|\geq   2,\\
\label{def8}\rho_i\rho _{i+1}\sigma _i &= \sigma _{i+1}\rho_i\rho _{i+1}, && i = 1, 2, \dots, n-2.
\end{align}
These groups were introduced by L.~Kauffman as a tool for working with virtual knots and links. The analogues of the Alexander and the Markov theorems for virtual braids and links were formulated and proved by S.~Kamada in \cite{Kam}. Another form of the Markov theorem for virtual braids and links was introduced by Kauffman and Lambropoulou in \cite{KauLam}.

It is easy to verify that the elements $\rho_1,\rho_2,\dots,\rho_{n-1}$ generate the full symmetric group $S_n$ in $VB_n$. Also it is known that the elements $\sigma_1,\sigma_2,\dots,\sigma_{n-1}$ generate
the braid group $B_n$ in $VB_n$. In paper \cite{GuPoVi} it is proved that the relations
\begin{align}
\label{forbidden}\rho_i\sigma_{i+1}\sigma_i&=\sigma_{i+1}\sigma_i\rho_{i+1},&i=1,2,\dots,n-2,\\
\label{f2}\rho_{i+1}\sigma_i\sigma_{i+1}&=\sigma_i\sigma_{i+1}\rho_i,&i=1,2,\dots,n-2
\end{align}
do not hold in the group $VB_n$. These relations are called forbidden.The welded braid group $WB_n$ on $n$ strands is the quotient of the group $VB_n$ by the forbidden relation (\ref{forbidden}). If we add to the group $WB_n$ the second forbidden relations (\ref{f2}), then we get the unrestricted virtual braid group $UVB_n$ \cite{BarBelDom}. Welded and unrestricted virtual braid groups are used for working with welded links \cite{FRiRu} and fused links \cite{AuBeMeWa, Nas}, respectively.

The flat virtual braid group $FVB_n$ introduced in \cite{Kau2} is the quotient of the group $VB_n$ by the relations
\begin{align}
\label{def1}\sigma_i^2&=1, && i = 1, 2, \dots,n-1.
\end{align}
These groups are used for working with flat virtual knots and links, which were introduced in \cite{Kau}, and which model curves on surfaces. The notion of flat virtual links is the same to the notion of virtual strings introduced by V.~Turaev in \cite{Tur1}. In the papers \cite{Ves2, Ves1} invariants for flat virtual knots and links were used for constructing invariants for virtual knots and links. Note that if we look to the forbidden relations (\ref{forbidden}), (\ref{f2}) in the flat virtual braid group $FVB_n$, then we see that these relations are equivalent, hence, speaking about forbidden relations in  $FVB_n$ we will speak only about relations (\ref{forbidden}). The quotient of $FVB_n$ by relations (\ref{forbidden}) is called the flat welded braid group and is denoted by $FWB_n$. 

The Gauss virtual braid group $GVB_n$ is the quotient of the group $FVB_n$ by the relations
\begin{align*}
\sigma_i\rho_i=\rho_i\sigma_i, && i = 1, 2, \dots,n-1.
\end{align*}
These groups are used for studying Gauss virtual links, which are called ``homotopy classes of Gauss words'' by Turaev in \cite{Tur2}, and called ``free knots'' by Manturov in \cite{Man}. 

All together the groups $B_n, VB_n, WB_n, UVB_n, FVB_n, FWB_n, GVB_n$ are called the braid-like groups. Representations of braid-like groups play an important role in the study of these groups. Using the representations one can prove that various algorithmic problems (the word problem, the conjugacy problem etc) can be solved in these groups. Also using representations of braid-like groups one can construct invariants for the corresponding knot theories \cite{BarNas1, BarNas2}.

One of the first representations of braid-like groups is the Artin representation 
$$\varphi_A:B_n\to{\rm Aut}(F_n)$$ 
of the braid group $B_n$ by automorphisms of the free group $F_n$ with the free generators $x_1,x_2,\dots,x_n$. This representation maps the generator $\sigma_i$ of $B_n$ to the following automorphism of $F_n$
$$\varphi_A(\sigma_i):\begin{cases}
x_i\mapsto x_ix_{i+1}x_i^{-1},\\
x_{i+1}\mapsto x_i
\end{cases}$$
(hereinafter we assume that all generators which are not explicitly written in the automorphism, are fixed). The Artin representation is faithful. During the years a lot of representations of braid-like groups were investigated. For example there are several known representations of the virtual braid groups $VB_n$: the Silver-Williams representation $VB_n\to {\rm Aut}(F_{n}*\mathbb{Z}^{n+1})$ (see \cite{SilWil}), the Boden-Dies-Gaudreau-Gerlings-Harper-Nicas representation $VB_n\to {\rm Aut}(F_{n}*\mathbb{Z}^2)$ (see \cite{BDGGHN}),  the Kamada representation $VB_n\to {\rm Aut}(F_{n}*\mathbb{Z}^{2n})$ (see \cite{BN}), the representations $VB_n\to {\rm Aut}(F_n*\mathbb{Z}^{2n+1})$, $VB_n\to {\rm Aut}(F_n*\mathbb{Z}^n)$ of Bardakov-Mikhalchishina-Neshchadim (see \cite{BarMikNes, BarMikNes2}).

The following problem is formulated by V.~Bardakov in the list of unsolved problems in virtual knot theory and combinatorial knot theory \cite[\S8.3, Problem~4]{problems}.

~\\
\textbf{Problem.} Is there a representation $\varphi:FVB_n\to{\rm Aut}(F_m)$ for some $m$ which does not preserve the forbidden relations (\ref{forbidden})? 
~\\

In the present paper we answer this question positively. The main result of the paper is the following theorem. 

~\\
\textbf{Theorem~\ref{goodrepr}.} {\it Let  $\theta:\{\sigma_1,\sigma_2,\dots,\sigma_{n-1},\rho_1,\rho_2,\dots,\rho_{n-1}\}\to{\rm Aut}(F_{2n})$ be the map given by the following formulas
\begin{align*}
\theta(\sigma_i):
\begin{cases}
x_i \mapsto x_{i+1}y_{i+1},\\
x_{i+1} \mapsto x_iy_{i+1}^{-1},\\
\end{cases}&&
\theta(\rho_i):
\begin{cases}
x_i \mapsto x_{i+1},\\
x_{i+1} \mapsto x_i,\\
y_i \mapsto y_{i+1},\\
y_{i+1} \mapsto y_i.\\
\end{cases}
\end{align*}
Then $\theta$ induces the representation $\theta:FVB_n\to{\rm Aut}(F_{2n})$. Moreover, this representation does not preserve the forbidden relations (\ref{forbidden}).}
~\\

The paper is organized as follows. In Section~\ref{rabpur} we give the necessary preliminaries about the  virtual pure braid group $VP_n$, the Rabenda group $H_n$ and their images in the groups $FVB_n$, $GVB_n$. In Section~\ref{hitheta} we study the representation $\theta:FVB_n\to{\rm Aut}(F_{2n})$ introduced in Theorem~\ref{goodrepr}: in Section~\ref{newwwrepp} we prove that $\theta$ is indeed a representation, and in Section~\ref{kerrr} we study the kernel of this representation, in particular, we localize the kernel between two explicit subgroups of $FVB_n$. In Section~\ref{related} we study some representations related with the representation $\theta$: Section~\ref{linrepr} deals with linear representations of $FVB_n$, Section~\ref{gausrepr} studies the representation which is induced by $\theta$ to the group $GVB_n$, and Section~\ref{otherguys} is about some representations which are equivalent to $\theta$. In Section~\ref{intersectionss} we study the groups $VP_n\cap H_n$, $FVP_n\cap FH_n$, $GVP_n\cap GH_n$ which play an important role in the study of the kernel of $\theta$. Finally, in Section~\ref{invvvv} using the representation $\theta$ we introduce a group invariant for flat welded links.

\subsection*{Acknowledgements}
The work is supported by Mathematical Center in Akademgorodok under agreement No 075-2019-1675 with the Ministry of Science and Higher Education of the Russian Federation. A large number of authors is connected with the fact that the main results of the paper were obtained during the work on The First Workshop at the Mathematical Center in Akademgorodok (http://mca.nsu.ru/workshopen/). The results were obtained during the student program on knots and braid groups advised by V.~Bardakov, T.~Nasybullov and A.~Vesnin.

\section{Preliminaries}\label{rabpur}
In this section we give preliminaries about specific subgroups of the groups $VB_n$, $FVB_n$, $GVB_n$.

Denote by $\iota_1: VB_n\to S_n$ the homomorphism from the virtual braid group $VB_n$ to the symmetric group $S_n$, which maps the generators $\sigma_i,\rho_i$ of the virtual braid group $VB_n$ to the transposition $(i, i+1)$ for $i=1,2,\dots,n-1$. The kernel of this homomorphism is denoted by $VP_n$ and is called the virtual pure braid group on $n$ strands. The homomorphism $\iota_1: VB_n\to S_n$ induces homomorphisms $\iota_1:FVB_n\to S_n$, $\iota_1:GVB_n\to S_n$ which act in the same way mapping the generators $\sigma_i,\rho_i$ to the transposition $(i, i+1)$ for $i=1,2,\dots,n-1$. The kernel of the homomorphism $\iota_1:FVB_n\to S_n$ is denoted by $FVP_n$ and is called the flat virtual pure braid group on $n$ strands, the kernel of the homomorphism $\iota_1:GVB_n\to S_n$ is denoted by $GVP_n$ and is called the Gauss virtual pure braid group on $n$ strands.

For $i=1,2,\dots,n-1$ denote by $\lambda_{i,i+1}, \lambda_{i+1,i}$ the following elements from the virtual braid group
\begin{align}\notag\lambda_{i,i+1}=\rho_i\sigma_i^{-1},&&\lambda_{i+1,i}=\rho_i\lambda_{i,i+1}\rho_i=\sigma_i^{-1}\rho_i.
\end{align}
Also for $1\leq i<j-1\leq n-1$ denote by $\lambda_{i,j}, \lambda_{j,i}$ the following elements from the virtual braid group
\begin{align}\notag\lambda_{i,j}&=\rho_{j-1}\rho_{j-2}\dots\rho_{i+1}\lambda_{i,i+1}\rho_{i+1}\dots\rho_{j-2}\rho_{j-1},\\
\label{lambda}\lambda_{j,i}&=\rho_{j-1}\rho_{j-2}\dots\rho_{i+1}\lambda_{i+1,i}\rho_{i+1}\dots\rho_{j-2}\rho_{j-1}.
\end{align}
It is clear that the elements $\lambda_{i,j}$ and $\lambda_{j,i}$ for $i<j$ belong to the virtual pure braid group $VP_n$. The elements given by the same formulas can be considered in the groups $FVP_n$, $GVP_n$. The following proposition collects the results from \cite[Theorem~1]{Bar} and \cite[Propositions 5.1, 5.6]{BarBelDom}.
\begin{proposition}\label{vpngen}The groups $VP_n$, $FVP_n$, $GVP_n$ can be presented in the following way.
	\begin{enumerate}
		\item The group $VP_n$ admits a presentation with the generators $\lambda_{i,j}, 1 \le i \not= j \le n$, and the defining relations
		\begin{align*}
		\lambda_{i,j}\lambda_{k,l} &= \lambda_{k,l}\lambda_{i,j}, \\
		\lambda_{k,i}\lambda_{k,j}\lambda_{i,j} &= \lambda_{i,j}\lambda_{k,j}\lambda_{k,i},
		\end{align*}
		where distinct letters denote distinct indices.
		\item The group $FVP_n$ admits a presentation with the generators $\lambda_{i,j}, 1 \le i < j \le n$, and the defining relations
		\begin{align*}
		\lambda_{i,j}\lambda_{k,l} &= \lambda_{k,l}\lambda_{i,j}, \\
		\lambda_{k,i}\lambda_{k,j}\lambda_{i,j} &= \lambda_{i,j}\lambda_{k,j}\lambda_{k,i},
		\end{align*}
		where distinct letters denote distinct indices.
		\item The group $GVP_n$ admits a presentation with the generators $\lambda_{i,j}, 1 \le i < j \le n$, and the defining relations
		\begin{align*}
		\lambda_{i,j}^2&=1,\\
		\lambda_{i,j}\lambda_{k,l} &= \lambda_{k,l}\lambda_{i,j}, \\
		\lambda_{k,i}\lambda_{k,j}\lambda_{i,j} &= \lambda_{i,j}\lambda_{k,j}\lambda_{k,i},
		\end{align*}
		where distinct letters denote distinct indices.
	\end{enumerate}
\end{proposition}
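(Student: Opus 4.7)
The plan is to establish part (1) first and then derive parts (2) and (3) by quotient and redundancy arguments.

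For (1), I would exploit the split short exact sequence
$$1\longrightarrow VP_n\longrightarrow VB_n\stackrel{\iota_1}{\longrightarrow} S_n\longrightarrow 1,$$
whose splitting is given by $(i,i+1)\mapsto\rho_i$, so that $VB_n\cong VP_n\rtimes S_n$. Applying Reidemeister--Schreier rewriting with the Schreier transversal consisting of the canonical reduced products of the $\rho_i$'s inside $VB_n$ yields a presentation of $VP_n$ whose Schreier generators are of the form $t\sigma_i^{\pm 1}\overline{t\sigma_i^{\pm 1}}^{\,-1}$ for $t$ in the transversal (the ones coming from $\rho_i$ being trivial). Using (\ref{lambda}), these generators can be reorganized as the $\lambda_{i,j},\lambda_{j,i}$ for $1\le i\ne j\le n$. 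The listed relations are then verified directly in $VB_n$: commutation relations follow from (\ref{def2}), (\ref{def5}) and (\ref{def7}), while the cubic relation $\lambda_{k,i}\lambda_{k,j}\lambda_{i,j}=\lambda_{i,j}\lambda_{k,j}\lambda_{k,i}$ is a consequence of (\ref{def3}) and (\ref{def8}). Completeness of the listed relations is the heart of the argument: I would define the abstract group $\widetilde{VP}_n$ by the stated presentation, equip it with the natural $S_n$-action by index-permutation of the $\lambda_{i,j}$'s, form the semidirect product $\widetilde{VP}_n\rtimes S_n$, and check that it satisfies all defining relations (\ref{def2})--(\ref{def8}) of $VB_n$. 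This produces a two-sided inverse to the obvious surjection $\widetilde{VP}_n\twoheadrightarrow VP_n$ extracted from Reidemeister--Schreier.

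For (2), the additional relations $\sigma_i^2=1$ defining $FVB_n$ yield
$$\lambda_{i,i+1}\lambda_{i+1,i}=(\rho_i\sigma_i^{-1})(\sigma_i^{-1}\rho_i)=\rho_i\sigma_i^{-2}\rho_i=\rho_i^2=1,$$
and conjugating this identity by $\rho_{i+1}\cdots\rho_{j-1}$ via (\ref{lambda}) shows $\lambda_{j,i}=\lambda_{i,j}^{-1}$ for all $1\le i<j\le n$. This makes the generators $\lambda_{j,i}$ redundant in the presentation of $FVP_n$ inherited from (1); eliminating them by Tietze transformations leaves exactly the presentation in (2). For (3), the additional $GVB_n$-relation $\sigma_i\rho_i=\rho_i\sigma_i$ forces $\lambda_{i,i+1}=\rho_i\sigma_i^{-1}=\sigma_i^{-1}\rho_i=\lambda_{i+1,i}$, which combined with (2) gives $\lambda_{i,j}^2=1$ for all $i<j$. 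Adding these square relations to the presentation in (2) yields (3); absence of further relations is confirmed by repeating the semidirect product argument in this quotient.

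The main obstacle is the completeness step in (1). Verifying that the abstract semidirect product $\widetilde{VP}_n\rtimes S_n$ satisfies every defining relation of $VB_n$ requires matching the braid relation (\ref{def3}) and the mixed relation (\ref{def8}) against the cubic relation among the $\lambda_{i,j}$'s after disentangling the nested $S_n$-conjugations hidden in the definition of $\lambda_{i,j}$ for non-adjacent pairs. It is here that the interaction between (\ref{def6}) and (\ref{def8}) plays the decisive role, and a careful bookkeeping of how the $\rho_{j-1}\cdots\rho_{i+1}$ conjugators interact with the generic cubic relation is the technical crux. Once this is in place, the arguments for (2) and (3) become routine applications of Tietze transformations.
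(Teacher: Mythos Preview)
The paper does not prove this proposition at all: it is stated as a collection of results cited from \cite[Theorem~1]{Bar} and \cite[Propositions~5.1,~5.6]{BarBelDom}, with no argument given in the text. Your proposal therefore supplies what the paper omits, and your strategy---Reidemeister--Schreier on the split extension $1\to VP_n\to VB_n\to S_n\to 1$, followed by the reverse check via the abstract semidirect product $\widetilde{VP}_n\rtimes S_n$---is precisely the method used in the cited source \cite{Bar} for part~(1). Your reductions for (2) and (3), via $\lambda_{j,i}=\lambda_{i,j}^{-1}$ and $\lambda_{i,j}^2=1$ respectively, match the arguments in \cite{BarBelDom}. The one point worth flagging is that in part~(2), after substituting $\lambda_{j,i}=\lambda_{i,j}^{-1}$ into the full list of cubic relations from (1) (which ranges over all orderings of $i,j,k$), you must verify that the resulting relations are all consequences of the single ordered form $\lambda_{k,i}\lambda_{k,j}\lambda_{i,j}=\lambda_{i,j}\lambda_{k,j}\lambda_{k,i}$ with $k<i<j$ (or whichever convention); this is routine but not literally automatic, and is carried out in \cite{BarBelDom}.
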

The subgroup $S_n=\langle \rho_1,\rho_2,\dots,\rho_{n-1}\rangle$ of $VB_n$ acts on the group $VP_n$. The action of $S_n$ on the generators $\lambda_{i,j}$ of $VP_n$ is described in \cite[Lemma~1]{Bar} as follows.
\begin{proposition}\label{conjugation}
Let $\rho \in S_n=\langle \rho_1,\rho_2,\dots,\rho_{n-1}\rangle$ and $\lambda_{i,j}~(1 \le i \not= j \le n)$ be the generators of $VP_n$. Then $\rho \lambda_{i,j}\rho^{-1}=\lambda_{\rho(i),\rho(j)}$, where $\rho(i),\rho(j)$ denote the images of $i,j$ under the permutation $\rho$, respectively.
\end{proposition}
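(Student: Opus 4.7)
The plan is to reduce to the transposition generators of $S_n$, verify the formula for consecutive indices by a small case analysis, and then extend to arbitrary $(i,j)$ by induction on $|j-i|$ using the recursive shape of~\eqref{lambda}.

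First, the desired identity is stable under composition: if $\rho,\rho'\in S_n$ both satisfy $\rho\lambda_{i,j}\rho^{-1}=\lambda_{\rho(i),\rho(j)}$ for every admissible pair, then so does $\rho\rho'$, by direct substitution $(\rho\rho')\lambda_{i,j}(\rho\rho')^{-1}=\rho\lambda_{\rho'(i),\rho'(j)}\rho^{-1}=\lambda_{\rho(\rho'(i)),\rho(\rho'(j))}$. Hence the collection of such $\rho$ is a subgroup of $S_n$, and since $S_n=\langle\rho_1,\ldots,\rho_{n-1}\rangle$ it suffices to handle $\rho=\rho_k$ for each $k=1,\ldots,n-1$.

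Second, for the base case I would prove $\rho_k\lambda_{a,a+1}\rho_k=\lambda_{\rho_k(a),\rho_k(a+1)}$ and $\rho_k\lambda_{a+1,a}\rho_k=\lambda_{\rho_k(a+1),\rho_k(a)}$ by splitting on the position of $k$ relative to $a$. When $k=a$, the identity is the definition $\lambda_{a+1,a}=\rho_a\lambda_{a,a+1}\rho_a$ combined with $\rho_a^2=1$. When $k=a+1$, it is the defining identity $\lambda_{a,a+2}=\rho_{a+1}\lambda_{a,a+1}\rho_{a+1}$ read off from~\eqref{lambda} (and analogously for $\lambda_{a+2,a}$). When $|k-a|\geq 2$, the commutation relations~\eqref{def5} and~\eqref{def7} give $[\rho_k,\rho_a]=[\rho_k,\sigma_a]=1$, matching the fact that $\rho_k$ fixes both $a$ and $a+1$. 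The only genuinely non-trivial sub-case is $k=a-1$: after one application of the $\rho$-braid relation~\eqref{def6}, the equality $\rho_{a-1}\rho_a\sigma_a^{-1}\rho_{a-1}=\rho_a\rho_{a-1}\sigma_{a-1}^{-1}\rho_a$ reduces precisely to the mixed relation~\eqref{def8}.

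Third, for $|j-i|>1$ I would induct on $|j-i|$, using $\lambda_{i,j}=\rho_{j-1}\lambda_{i,j-1}\rho_{j-1}$ for $i<j$ and the analogous identity for $\lambda_{j,i}$, both immediate from~\eqref{lambda}. Conjugating by $\rho_k$: if $|k-(j-1)|\geq 2$ then $\rho_k$ commutes with $\rho_{j-1}$, the induction hypothesis applies to $\lambda_{i,j-1}$, and the equalities $\rho_k(j-1)=j-1$, $\rho_k(j)=j$ recover the required target. The cases $k=j-1$ and $k=j$ are essentially tautological from the recursive definition, while $k=j-2$ requires the $\rho$-braid relation~\eqref{def6} to push $\rho_{j-2}$ through $\rho_{j-1}$; in the boundary instance $i=j-2$ the mixed relation~\eqref{def8} is invoked once more to land on $\lambda_{j-1,j}$.

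The main obstacle I anticipate is the careful bookkeeping in the induction step: for each position of $k$ relative to the interval between $i$ and $j$, one must verify that the permutation arithmetic under $\rho_k$ matches the recursive construction of $\lambda_{\rho_k(i),\rho_k(j)}$ in~\eqref{lambda}. The sole algebraically non-trivial input is the mixed relation~\eqref{def8}; everything else is pure commutation and the $\rho$-braid relation.
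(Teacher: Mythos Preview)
The paper does not give a proof of this proposition; it simply quotes the result from \cite[Lemma~1]{Bar}. Your direct argument is correct and follows the natural route (almost certainly the same one taken in~\cite{Bar}): reduce to the Coxeter generators $\rho_k$ by the subgroup observation, verify the conjugation formula on the ``short'' generators $\lambda_{a,a+1}$ and $\lambda_{a+1,a}$ by a case analysis on the position of $k$, and then propagate to arbitrary $\lambda_{i,j}$ via the recursion~\eqref{lambda}. You have correctly isolated the one place where something beyond commutation and the $\rho$-braid relation~\eqref{def6} is needed, namely the mixed relation~\eqref{def8} in the sub-cases $k=a-1$ (base) and $k=j-2$ with $i=j-2$ (induction boundary). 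The bookkeeping you flag as the main obstacle is routine; the induction for $i>j$ runs identically using the second line of~\eqref{lambda}, so nothing further is required there.
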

The group $FVP_n$ for $n=3$ has another nice presentation which is proved in \cite[Remark 5.2]{BarBelDom}.
\begin{proposition}\label{FVP31}
Let $a,b,c$ be the following elements from $FVP_3$
	\begin{equation}\label{fvp3}
\begin{cases}
a = \lambda_{2,3}\lambda_{1,3}=\rho_2\sigma_2\rho_2\rho_1\sigma_1\rho_2, \\
b = \lambda_{2,3}=\rho_2\sigma_2,\\
c = \lambda_{2,3}\lambda_{1,2}^{-1}=\rho_2\sigma_2\sigma_1\rho_1.\\
\end{cases}
\end{equation}
Then $FVP_3$ has the following presentation
$$FVP_3 = \langle a, b, c~|~[a,c] = 1 \rangle  = \langle a, c~|~[a,c] = 1 \rangle * \langle b \rangle=\mathbb{Z}^2*\mathbb{Z}.$$
\end{proposition}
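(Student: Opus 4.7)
The plan is to apply a sequence of Tietze transformations to the standard presentation of $FVP_3$ furnished by Proposition~\ref{vpngen}(2).

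First I would specialize that presentation to $n=3$. The group has three generators $\lambda_{1,2},\lambda_{1,3},\lambda_{2,3}$. The ``disjoint indices'' commutativity relation $\lambda_{i,j}\lambda_{k,l}=\lambda_{k,l}\lambda_{i,j}$ requires four distinct indices among $i,j,k,l$, so it is vacuous when $n=3$. The triangle relation $\lambda_{k,i}\lambda_{k,j}\lambda_{i,j}=\lambda_{i,j}\lambda_{k,j}\lambda_{k,i}$ a priori has six instances, one for each assignment of $\{1,2,3\}$ to the ordered triple $(k,i,j)$. However, since $\sigma_i^2=\rho_i^2=1$ in $FVB_n$, one has $\lambda_{i,i+1}\lambda_{i+1,i}=\rho_i\sigma_i\sigma_i\rho_i=1$, and conjugation by $\rho_{i+1}\cdots\rho_{j-1}$ (which is an involution) propagates this to $\lambda_{j,i}=\lambda_{i,j}^{-1}$ for every $i<j$. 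Substituting this identity, I expect all six triangle relations to collapse to a single relation, which I will take in the form
$$R:\;\;\lambda_{1,2}\lambda_{1,3}\lambda_{2,3}=\lambda_{2,3}\lambda_{1,3}\lambda_{1,2}.$$

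Next I would carry out the Tietze change of generators suggested by (\ref{fvp3}). The inverse change is readily computed: $\lambda_{2,3}=b$, $\lambda_{1,3}=b^{-1}a$, $\lambda_{1,2}=c^{-1}b$. Hence $\{a,b,c\}$ generates the same group as $\{\lambda_{1,2},\lambda_{1,3},\lambda_{2,3}\}$. Substituting these expressions into $R$ yields
$$(c^{-1}b)(b^{-1}a)(b)=(b)(b^{-1}a)(c^{-1}b),$$
which after the obvious cancellations reduces to $c^{-1}a=ac^{-1}$, equivalently $[a,c]=1$.

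Finally, the presentation $\langle a,b,c\mid [a,c]=1\rangle$ splits as a free product $\langle a,c\mid [a,c]=1\rangle * \langle b\rangle\cong\mathbb{Z}^2*\mathbb{Z}$, because $b$ does not appear in the single defining relation.

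The one genuinely delicate point is verifying that the six a priori distinct triangle relations really do collapse to $R$ after substituting $\lambda_{j,i}=\lambda_{i,j}^{-1}$; this needs to be checked case by case, but each case amounts to a short rearrangement. All remaining steps are routine bookkeeping.
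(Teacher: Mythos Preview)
Your argument is correct. The paper does not supply its own proof of this proposition; it simply quotes the result from \cite[Remark~5.2]{BarBelDom}. So you have filled in a proof where the paper gives none, and your route via Tietze transformations from Proposition~\ref{vpngen}(2) is the natural one.

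One small remark on the ``delicate point'': under the reading of Proposition~\ref{vpngen}(2) where the generators are only the $\lambda_{i,j}$ with $i<j$, the triangle relation $\lambda_{k,i}\lambda_{k,j}\lambda_{i,j}=\lambda_{i,j}\lambda_{k,j}\lambda_{k,i}$ is naturally indexed by triples $k<i<j$, so for $n=3$ there is literally a single instance $(k,i,j)=(1,2,3)$, namely your $R$, and no case-check is needed. If instead one interprets the relation over all six orderings of $(k,i,j)$ with the convention $\lambda_{j,i}=\lambda_{i,j}^{-1}$ (which you correctly derive from $\sigma_i^2=\rho_i^2=1$), then your claim that all six collapse to $R$ is also easily verified: three of them are inverses of the other three, and among those three, left/right multiplication by the appropriate $\lambda$ reduces each to $R$. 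Either way the outcome is the same, and the substitution $\lambda_{2,3}=b$, $\lambda_{1,3}=b^{-1}a$, $\lambda_{1,2}=c^{-1}b$ into $R$ gives $[a,c]=1$ exactly as you compute.
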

Note that the elements $\lambda_{1,2}, \lambda_{1,3}, \lambda_{2,3}$ can be expressed via the elements $a,b,c$ from Proposition~\ref{FVP31} in the following way
\begin{align}\label{labc}
\lambda_{1,3}=b^{-1}a,&&
\lambda_{2,3}=b,&&
\lambda_{1,2}=c^{-1}b.
\end{align}

Denote by $\iota_2: VB_n\to S_n$ the homomorphism from the virtual braid group $VB_n$ to the symmetric group $S_n$, which maps the generator $\sigma_i$ to the unit element of $S_n$, and maps the generator $\rho_i$ to the transposition $(i, i+1)$ for $i=1,2,\dots,n-1$. The kernel of this homomorphism is denoted by $H_n$ and is called the Rabenda group on $n$ strands. The Rabenda group was introduced in \cite{Rab}, some of its properties were studied in \cite{BarBel}.  The homomorphism $\iota_2: VB_n\to S_n$ induces homomorphisms $\iota_2:FVB_n\to S_n$, $\iota_2:GVB_n\to S_n$ which act in the same way by mapping the generator $\sigma_i$ to the unit element of $S_n$, and the generator $\rho_i$ to the transposition $(i, i+1)$ for $i=1,2,\dots,n-1$. The kernel of the homomorphism $\iota_2:FVB_n\to S_n$ is denoted by $FH_n$, the kernel of the homomorphism $\iota_2:GVB_n\to S_n$ is denoted by $GH_n$.

For $i=1,2,\dots,n-1$ denote by $x_{i,i+1}, x_{i+1,i}$ the following elements from $VB_n$
\begin{align*}
x_{i,i+1}=\sigma_i,&&\
x_{i+1,i}=\rho_i\sigma_i\rho_i.
\end{align*}
Also for $1\leq i<j-1\leq n-1$ denote by $x_{i,j}, x_{j,i}$ the following elements from the virtual braid group
\begin{align*}
x_{i,j}&=\rho_{j-1}\ldots \rho_{i+1}\sigma_i\rho_{i+1}\ldots \rho_{j-1},\\
x_{j,i}&=\rho_{j-1}\ldots \rho_{i+1}\rho_i\sigma_i\rho_i\rho_{i+1}\ldots \rho_{j-1}.
\end{align*}
It is clear that the elements $x_{i,j}$ and $x_{j,i}$ for $i<j$ belong to the group $H_n$. Using the same formulas one can define the elements in the groups $FH_n$, $GH_n$. The following result says how the groups $H_n$, $FH_n$, $GH_n$ can be presented. The proof of this result can be found in \cite[Proposition~17]{BarBel}.
\begin{proposition}\label{rabendagen}The groups $H_n$, $FH_n$, $GH_n$ can be presented in the following way.
	\begin{enumerate}
		\item The group $H_n$ admits a presentation with the generators $x_{i,j}$, $1\le i\ne j\le n$, and the defining relations
		\begin{align*}
		x_{i,j}x_{k,l}&=x_{k,l}x_{i,j},\\
		x_{i,k}x_{k,j}x_{i,k}&=x_{k,j}x_{i,k}x_{k,j},
		\end{align*}
		where distinct letters denote disting indices.
		\item 	The group $FH_n$ admits a presentation with the generators $x_{i,j}$, $1\le i\neq j\le n$, and the defining relations
		\begin{align*}
		x_{i,j}^2&=1,\\
		x_{i,j}x_{k,l}&=x_{k,l}x_{i,j},\\
		x_{i,k}x_{k,j}x_{i,k}&=x_{k,j}x_{i,k}x_{k,j},
		\end{align*}
		where distinct letters denote disting indices.
		\item 	The group $GH_n$ admits a presentation with the generators $x_{i,j}$, $1\le i< j\le n$, and the defining relations
		\begin{align*}
		x_{i,j}^2&=1,\\
		x_{i,j}x_{k,l}&=x_{k,l}x_{i,j},\\
		x_{i,k}x_{k,j}x_{i,k}&=x_{k,j}x_{i,k}x_{k,j},
		\end{align*}
		where distinct letters denote disting indices.
	\end{enumerate}
\end{proposition}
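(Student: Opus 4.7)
The plan is to prove all three presentations simultaneously by applying the Reidemeister--Schreier method to the split short exact sequences
\[
1\to H_n\to VB_n\xrightarrow{\iota_2} S_n\to 1,\qquad 1\to FH_n\to FVB_n\xrightarrow{\iota_2} S_n\to 1,\qquad 1\to GH_n\to GVB_n\xrightarrow{\iota_2} S_n\to 1.
\]
Each sequence is split: the map $s:S_n\to VB_n$ sending the Coxeter generator $(i,i+1)$ to $\rho_i$ is well defined because the relations $\rho_i^2=1$, $\rho_i\rho_j=\rho_j\rho_i$ for $|i-j|\geq2$, and $\rho_i\rho_{i+1}\rho_i=\rho_{i+1}\rho_i\rho_{i+1}$ already hold in $VB_n$. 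Hence $VB_n\cong H_n\rtimes S_n$, and likewise for $FVB_n$ and $GVB_n$.

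First I would fix a Schreier transversal $T=\{t_\pi:\pi\in S_n\}$, where $t_\pi$ is a reduced expression for $\pi$ in the $\rho_i$'s lifted through $s$. The Schreier generators of $H_n$ are then the elements $t_\pi\sigma_i t_{\pi\cdot(i,i+1)}^{-1}$; using the semidirect-product action $\rho\,\sigma_i\,\rho^{-1}$ and the explicit formulas defining $x_{i,j}$ and $x_{j,i}$, one checks directly that each such Schreier generator equals some $x_{k,l}$ (and, conversely, every $x_{k,l}$ arises). This identifies the generating set claimed in the proposition.

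Next I would rewrite each defining relation of $VB_n$ as a word in the $x_{i,j}$'s. The purely-$\rho$ relations lie in the image of $s$ and hence rewrite trivially; the mixed relations $\sigma_i\rho_j=\rho_j\sigma_i$ ($|i-j|\geq2$) and $\rho_i\rho_{i+1}\sigma_i=\sigma_{i+1}\rho_i\rho_{i+1}$ are precisely the content of the semidirect-product action $\rho x_{i,j}\rho^{-1}=x_{\rho(i),\rho(j)}$ and produce no new consequences after rewriting. The commutativity $\sigma_i\sigma_j=\sigma_j\sigma_i$ ($|i-j|\geq2$) rewrites to $x_{i,i+1}x_{j,j+1}=x_{j,j+1}x_{i,i+1}$; conjugating by $S_n$ yields every relation $x_{i,j}x_{k,l}=x_{k,l}x_{i,j}$ with four distinct indices. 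The braid relation $\sigma_i\sigma_{i+1}\sigma_i=\sigma_{i+1}\sigma_i\sigma_{i+1}$ rewrites to $x_{i,i+1}x_{i+1,i+2}x_{i,i+1}=x_{i+1,i+2}x_{i,i+1}x_{i+1,i+2}$, and its $S_n$-conjugates produce all relations $x_{i,k}x_{k,j}x_{i,k}=x_{k,j}x_{i,k}x_{k,j}$ on triples of distinct indices. This establishes the presentation of $H_n$. For $FH_n$, adjoining the extra relation $\sigma_i^2=1$ and conjugating by $S_n$ gives exactly $x_{i,j}^2=1$. For $GH_n$, adjoining $\sigma_i\rho_i=\rho_i\sigma_i$ is equivalent to $\rho_i\sigma_i\rho_i=\sigma_i$, i.e.\ $x_{i+1,i}=x_{i,i+1}$; conjugating by $S_n$ yields $x_{j,i}=x_{i,j}$ for all $i<j$, so the generating set collapses to $\{x_{i,j}:1\leq i<j\leq n\}$, and the three families of relations translate verbatim under this identification.

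The main technical obstacle is the bookkeeping in the Reidemeister--Schreier rewrite: one must check that for \emph{every} choice of transversal element $t_\pi$ and defining relator of $VB_n$ the resulting rewritten word lies in the normal closure of the listed relations, so that no hidden relation sneaks in. This amounts to carefully iterating the conjugation formula $\rho\, x_{i,j}\,\rho^{-1}=x_{\rho(i),\rho(j)}$, and the verification is carried out in detail in \cite[Proposition~17]{BarBel}; the additional relations $\sigma_i^2=1$ and $\sigma_i\rho_i=\rho_i\sigma_i$ survive the rewrite cleanly, so no further argument beyond the base case is required to deduce the presentations of $FH_n$ and $GH_n$.
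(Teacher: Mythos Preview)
The paper does not prove this proposition at all: it merely states the result and refers to \cite[Proposition~17]{BarBel} for the proof. Your sketch via the Reidemeister--Schreier method applied to the split extensions $VB_n=H_n\rtimes S_n$, $FVB_n=FH_n\rtimes S_n$, $GVB_n=GH_n\rtimes S_n$ is exactly the standard argument, and you yourself defer the detailed rewriting verification to the same reference; so your proposal is entirely in line with the paper's treatment, only more explicit.
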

The subgroup $S_n=\langle \rho_1,\rho_2,\dots,\rho_{n-1}\rangle$ of $VB_n$ acts on the group $H_n$. The action of $S_n$ on the generators $x_{i,j}$ of $H_n$ is described in \cite[Lemma~16]{BarBel} as follows.
\begin{proposition}\label{conjugation1}
	Let $\rho \in S_n=\langle \rho_1,\rho_2,\dots,\rho_{n-1}\rangle$ and $x_{i,j}~(1 \le i \not= j \le n)$ be the generators of $H_n$. Then $\rho x_{i,j}\rho^{-1}=x_{\rho(i),\rho(j)}$, where $\rho(i),\rho(j)$ denote the images of $i,j$ under the permutation $\rho$, respectively.
\end{proposition}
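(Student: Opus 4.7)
The plan is to reduce the assertion to the case where $\rho=\rho_k$ is a single Coxeter generator. Since $S_n=\langle\rho_1,\dots,\rho_{n-1}\rangle$, conjugation is multiplicative in $\rho$, and $\rho_k^{-1}=\rho_k$ by (\ref{def4}), it suffices to prove
\[
\rho_k x_{i,j}\rho_k=x_{\rho_k(i),\rho_k(j)}
\]
for every $k\in\{1,\dots,n-1\}$ and every pair $1\le i\ne j\le n$; the general case then follows by induction on the length of $\rho$ as a word in the generators.

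For the verification, recall that for $i<j$,
\[
x_{i,j}=\tau_{i,j}\,\sigma_i\,\tau_{i,j}^{-1},\qquad x_{j,i}=\tau_{i,j}\,\rho_i\sigma_i\rho_i\,\tau_{i,j}^{-1},
\]
where $\tau_{i,j}=\rho_{j-1}\rho_{j-2}\cdots\rho_{i+1}$ with $\tau_{i,i+1}=1$. I would split the verification into cases on how $\{k,k+1\}$ meets $\{i,j\}$. For the \emph{disjoint} case $\{k,k+1\}\cap\{i,j\}=\emptyset$, I would first establish the preparatory identity
\[
\rho_k\tau_{i,j}=\tau_{i,j}\rho_{k'},
\]
with $k'=k+1$ when $k\in\{i+1,\dots,j-2\}$ and $k'=k$ otherwise; this is a short rewrite using (\ref{def5}) and, in the first regime, a single application of the $\rho$-braid relation (\ref{def6}). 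Since in every disjoint subcase $|k'-i|\ge 2$, the element $\rho_{k'}$ commutes with $\sigma_i$ by (\ref{def7}), which gives $\rho_k x_{i,j}\rho_k=\tau_{i,j}\rho_{k'}\sigma_i\rho_{k'}\tau_{i,j}^{-1}=x_{i,j}=x_{\rho_k(i),\rho_k(j)}$. The four \emph{overlap} cases $k\in\{i-1,i,j-1,j\}$ must be done by explicit rewriting. The prototype is $k=i$ with $j>i+1$: moving the outer $\rho_i$'s inward past every $\rho_l$ with $l\ge i+2$ via (\ref{def5}) yields
\[
\rho_i x_{i,j}\rho_i=\rho_{j-1}\cdots\rho_{i+2}(\rho_i\rho_{i+1})\sigma_i(\rho_{i+1}\rho_i)\rho_{i+2}\cdots\rho_{j-1},
\]
and one application of the mixed relation (\ref{def8}) $\rho_i\rho_{i+1}\sigma_i=\sigma_{i+1}\rho_i\rho_{i+1}$ followed by the collapses $\rho_{i+1}^2=\rho_i^2=1$ collapses the bracketed middle to $\sigma_{i+1}$, producing $\tau_{i+1,j}\sigma_{i+1}\tau_{i+1,j}^{-1}=x_{i+1,j}=x_{\rho_i(i),\rho_i(j)}$. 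The remaining overlap cases $k=i-1$, $k=j-1$, $k=j$ are symmetric, each using one application of (\ref{def8}) (or its equivalent form $\sigma_i\rho_{i+1}\rho_i=\rho_{i+1}\rho_i\sigma_{i+1}$) in the midst of far-commutativity moves. The corresponding identities for $x_{j,i}$ follow from the same manipulations applied to the extra pair of $\rho_i$'s bracketing the central $\sigma_i$; the adjacent case $j=i+1$ is immediate from the definition, since $\rho_i\sigma_i\rho_i=x_{i+1,i}$.

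The main obstacle is purely organizational: one must enumerate the disjoint regime and the four overlap subcases, and in each overlap subcase orchestrate several far-commutativity moves around exactly one application of (\ref{def6}) or (\ref{def8}). No ideas beyond the defining relations of $VB_n$ are required, which is why the very same argument yields the analogous conjugation formulas for the generators of $FH_n$ in $FVB_n$ and of $GH_n$ in $GVB_n$.
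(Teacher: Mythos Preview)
The paper does not actually supply a proof of this proposition; it is quoted verbatim from \cite[Lemma~16]{BarBel}. Your direct verification---reducing to a single Coxeter generator $\rho_k$ and then splitting into the disjoint case $\{k,k+1\}\cap\{i,j\}=\varnothing$ versus the four overlap cases---is the standard argument and is correct. Your commutation identity $\rho_k\tau_{i,j}=\tau_{i,j}\rho_{k'}$ (with $k'=k+1$ in the interior regime, via one use of~(\ref{def6})) is exactly right, and the prototype overlap computation for $k=i$ is carried out cleanly.

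Two minor remarks. First, the overlap cases $k=j$ and $k=j-1$ are in fact simpler than you suggest: when $k=j$, the flanking $\rho_j$'s are absorbed directly into the definition of $x_{i,j+1}$, and when $k=j-1$, the outer $\rho_{j-1}$'s cancel against the leading factors of $\tau_{i,j}$ by~(\ref{def4}); neither requires the mixed relation~(\ref{def8}). Only $k=i$ and $k=i-1$ genuinely need~(\ref{def8}). Second, for the inductive step you should note that conjugation by a product unwinds from the inside out, so that the resulting indices compose in the correct order to match $\rho(i),\rho(j)$; since every $\rho_k$ is an involution this works regardless of left/right conventions, but it is worth a sentence.
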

Propositions~\ref{vpngen},~\ref{conjugation},~\ref{rabendagen},~\ref{conjugation1} imply the decompositions of the groups $VB_n, FVB_n, GVB_n$ into the semidirect products 
\begin{align*}
VB_n&=VP_n\rtimes S_n,&&  FVB_n=FVP_n\rtimes S_n,&& GVB_n=GVP_n\rtimes S_n,\\
VB_n&=H_n\rtimes S_n, &&FVB_n=FH_n\rtimes S_n,&&GVB_n=GH_n\rtimes S_n.
\end{align*} 

For $1\leq i< j\leq n$ denote by $a_{i,j}$ the transposition $(i,j)$ in the symmetric group $S_n$. The following result is the direct consequence of \cite[Proposition~2.1]{BirKoLee}.

\begin{proposition}\label{permtrans}The group $S_n$ admits a presentation with generators $a_{t,s}$, $1\leq s<t\leq n$ and the defining relations
	\begin{align*}
	a_{t,s}^2&=1,&&1\leq s<t\leq n,\\
	a_{t,s}a_{r,q}&=a_{r,q}a_{t,s},&&(t-r)(t-q)(s-r)(s-q)>0,\\ 
	a_{t,s}a_{s,r}&=a_{t,r}a_{t,s}=a_{s,r}a_{t,r},&&n\geq t>s>r\geq 1.
	\end{align*}
\end{proposition}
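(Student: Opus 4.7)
The plan is to derive this presentation as a direct consequence of the Birman--Ko--Lee presentation of the braid group $B_n$. Proposition~2.1 of \cite{BirKoLee} asserts that $B_n$ admits a presentation with generators $a_{t,s}$ for $1\le s<t\le n$ (the BKL, or band, generators), whose defining relations are exactly the last two families listed in the statement, namely the commuting relation under the condition $(t-r)(t-q)(s-r)(s-q)>0$ and the triangle relation $a_{t,s}a_{s,r}=a_{t,r}a_{t,s}=a_{s,r}a_{t,r}$; the squaring relations $a_{t,s}^2=1$ are of course absent in $B_n$. Here $a_{t,s}$ is defined by $a_{t,s}=(\sigma_{t-1}\sigma_{t-2}\cdots\sigma_{s+1})\,\sigma_s\,(\sigma_{t-1}\sigma_{t-2}\cdots\sigma_{s+1})^{-1}$, so in particular $a_{s+1,s}=\sigma_s$, and under the natural surjection $B_n\to S_n$ sending $\sigma_i\mapsto (i,i+1)$ the element $a_{t,s}$ is sent to the transposition $(s,t)$.

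The plan is then to use that $S_n$ is the quotient of $B_n$ by the normal closure of $\{\sigma_i^2 : 1\le i\le n-1\}$. Adjoining the relations $\sigma_i^2=1$ to the BKL presentation therefore yields a presentation of $S_n$ with generators $a_{t,s}$ and defining relations consisting of the commuting and triangle relations together with $a_{i+1,i}^2=1$ for $1\le i\le n-1$. To match the statement I must replace the latter family by the stronger looking family $a_{t,s}^2=1$ for all $1\le s<t\le n$, and verify that this does not change the quotient. Setting $w=\sigma_{t-1}\sigma_{t-2}\cdots\sigma_{s+1}$, the identity $a_{t,s}=w\,\sigma_s\,w^{-1}$ gives $a_{t,s}^2=w\,\sigma_s^2\,w^{-1}$, so the normal closure in $B_n$ of $\{a_{t,s}^2 : 1\le s<t\le n\}$ coincides with the normal closure of $\{\sigma_i^2 : 1\le i\le n-1\}$. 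The claimed presentation of $S_n$ follows.

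The main (mild) obstacle is the final replacement step: one must be careful that it is legitimate to enlarge the defining set of relations without altering the group. This is handled by the conjugation formula above, which shows that each new relation $a_{t,s}^2=1$ is a consequence of the BKL relations together with the old relations $\sigma_i^2=1$, while conversely the relations $\sigma_i^2=1$ are already special cases of the new ones (take $t=i+1$, $s=i$). Everything else is a routine verification that the commuting and triangle relations hold among transpositions in $S_n$, and that transpositions $(s,t)$ with $1\le s<t\le n$ generate $S_n$.
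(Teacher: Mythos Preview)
Your argument is correct and is exactly the derivation the paper has in mind: the paper does not give a proof but simply records the result as ``a direct consequence of \cite[Proposition~2.1]{BirKoLee}'', and your proposal spells out precisely that consequence---quotienting the BKL presentation of $B_n$ by the normal closure of $\{\sigma_i^2\}$ and noting that this normal closure equals that of $\{a_{t,s}^2\}$ via the conjugation $a_{t,s}=w\sigma_s w^{-1}$.
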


\section{Representation of $FVB_n$ by automorphisms of $F_{2n}$}\label{hitheta}
In this section we introduce the representation $\theta:FVB_n\to{\rm Aut}(F_{2n})$ which does not preserve the forbidden relations and study some of its properties. All the actions in the paper are supposed to be right, i.~e. if $X$ is a set, and $f,g:X\to X$ are bijections of $X$, then $(fg)$ denotes the map $fg:X\to X$ which acts on an element $x\in X$ by the rule $(fg)(x)=g(f(x))$.
\subsection{Construction of $\theta$}\label{newwwrepp}
Let us consider the free group $F_{2n}$ with the free generators $x_1,x_2,\dots,x_n,y_1,y_2,\dots,y_n$. The following theorem provides a representation $\theta:FVB_n\to {\rm Aut}(F_{2n})$ which does not preserve the forbiden relations (\ref{forbidden}).
\begin{theorem}\label{goodrepr} Let  $\theta:\{\sigma_1,\sigma_2,\dots,\sigma_{n-1},\rho_1,\rho_2,\dots,\rho_{n-1}\}\to{\rm Aut}(F_{2n})$ be the map given by the following formulas
\begin{align}\label{mainrepr}
	\theta(\sigma_i):
	\begin{cases}
	x_i \mapsto x_{i+1}y_{i+1},\\
	x_{i+1} \mapsto x_iy_{i+1}^{-1},\\
	\end{cases}&&
	\theta(\rho_i):
	\begin{cases}
	x_i \mapsto x_{i+1},\\
	x_{i+1} \mapsto x_i,\\
	y_i \mapsto y_{i+1},\\
	y_{i+1} \mapsto y_i.\\
	\end{cases}
\end{align}
Then $\theta$ induces the representation $\theta:FVB_n\to{\rm Aut}(F_{2n})$. Moreover, this representation does not preserve the forbidden relations (\ref{forbidden}).
\end{theorem}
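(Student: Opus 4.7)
The plan is to establish Theorem~\ref{goodrepr} in three stages: check that each $\theta(\sigma_i)$ and $\theta(\rho_i)$ really is an automorphism of $F_{2n}$, verify that every defining relation of $FVB_n$ is respected, and finally exhibit a concrete failure of the forbidden relation (\ref{forbidden}). The automorphism claim for $\theta(\rho_i)$ is clear because it simply permutes the free generators. For $\theta(\sigma_i)$, I would compute $\theta(\sigma_i)^2$ directly: since $\theta(\sigma_i)$ fixes $y_{i+1}$ by the stated convention, one finds $\theta(\sigma_i)^2(x_i) = \theta(\sigma_i)(x_{i+1}y_{i+1}) = x_iy_{i+1}^{-1}\cdot y_{i+1} = x_i$, and similarly for $x_{i+1}$, so $\theta(\sigma_i)$ is an involution and therefore an automorphism. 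This computation simultaneously verifies relation (\ref{def1}).

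The remaining defining relations of $FVB_n$ split naturally. All far-commutativity relations ((\ref{def2}), (\ref{def5}), (\ref{def7}) with $|i-j|\geq 2$) follow at once, since $\theta(\sigma_i)$ and $\theta(\rho_i)$ act nontrivially only on the generators with indices in $\{i,i+1\}$; hence two such automorphisms with disjoint support commute. The $\rho$-relations (\ref{def4}) and (\ref{def6}) just assert that the $\theta(\rho_i)$'s generate a copy of $S_n$ acting diagonally on the pairs $(x_j,y_j)$, which is transparent from the definition. The genuinely nontrivial relations to check are the braid relation (\ref{def3}) for the $\sigma$'s and the mixed braid relation (\ref{def8}); both can be dispatched by a finite direct calculation, evaluating each side on $x_i, x_{i+1}, x_{i+2}$ and $y_i, y_{i+1}, y_{i+2}$ using the right-action convention $(fg)(x) = g(f(x))$ and checking equality word by word.

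For the forbidden relation, I would take $i=1$ and test both $\theta(\rho_1\sigma_2\sigma_1)$ and $\theta(\sigma_2\sigma_1\rho_2)$ on the generator $y_1$. Under the right-action convention, the left-hand word first applies $\theta(\rho_1)$ (sending $y_1\mapsto y_2$) and then two $\sigma$-automorphisms which fix every $y_j$, producing $y_2$; the right-hand word first applies the two $\sigma$-automorphisms (which fix $y_1$) and then $\theta(\rho_2)$ (which also fixes $y_1$), producing $y_1$. Since $y_1\neq y_2$ in the free group, the forbidden relation is not preserved. The main obstacle in the whole argument is bookkeeping in (\ref{def3}) and (\ref{def8}): the image $\theta(\sigma_i)$ couples $x_i$ with $y_{i+1}$ and $x_{i+1}$ with $y_{i+1}^{-1}$, so the three consecutive applications in the braid relation mix $y$-factors across indices and one must check the signs line up. Fortunately, because every $\theta(\sigma_j)$ fixes all $y$-generators, these $y$-factors propagate passively through the computation and the required cancellations do work out.
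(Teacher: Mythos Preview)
Your proposal is correct and follows essentially the same route as the paper's own proof: both verify the defining relations of $FVB_n$ by direct computation (with the far-commutativity relations handled via disjoint support, and the braid-type relations (\ref{def3}), (\ref{def6}), (\ref{def8}) checked explicitly on generators), and both detect the failure of the forbidden relation by evaluating on a $y$-generator, using exactly the observation that the $\theta(\sigma_j)$ fix all $y_k$. Your explicit preliminary step of confirming that $\theta(\sigma_i)$ is an automorphism via $\theta(\sigma_i)^2=\mathrm{id}$ is a small addition the paper leaves implicit, but otherwise the arguments coincide.
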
 
\begin{proof}In order to show that the map $\theta$ given by (\ref{mainrepr}) induces the representation of the flat virtual braid group $FVB_n$, it is sufficient to show that $\theta$ preserves all defining relations (\ref{def2})-(\ref{def8}), (\ref{def1}) of the flat virtual braid group $FVB_n$.
	
For $i=1,2,\dots,n-1$ denote by $s_i,r_i\in {\rm Aut}(F_{2n})$ respectively the images of the elements $\sigma_i,\rho_i\in FVB_n$ under the map $\theta$. The fact that $\theta$ preserves relations (\ref{def4}), (\ref{def1}) means that the equalities $r_i^2=id$, $s_i^2=id$ hold for all $i=1,2,\dots,n-1$. These equalities are obvious. The fact that $\theta$ preserves relations (\ref{def2}), (\ref{def5}), (\ref{def7}) means that $s_is_j=s_js_i$, $r_ir_j=r_jr_i$ and $s_ir_j=r_js_i$ for all indices $i,j$ such that $|i-j|\geq2$. These equalities are easy to check. Indeed, automorphisms $s_i$, $r_i$ act identically on all generators of $F_{2n}$ except for $x_i, x_{i+1}, y_i, y_{i+1}$. Then for $|i-j|\ge2$  the automorphism $s_i$ acts identically on  $x_j, x_{j+1}, y_j, y_{j+1}$, and $s_j$ acts identically on  $x_i, x_{i+1}, y_i, y_{i+1}$. Therefore $s_is_j=s_js_i$. Similarly the equalities $r_ir_j=r_jr_i$ and $s_ir_j=r_js_i$ hold for $|i-j|\ge2$.

The fact that $\theta$ preserves relations (\ref{def3}), (\ref{def6}), (\ref{def8}) means that for all $i=1,2,\dots,n-2$ the equalities $s_is_{i+1}s_i=s_{i+1}s_is_{i+1}$, $r_ir_{i+1}r_i=r_{i+1}r_ir_{i+1}$, $s_ir_{i+1}r_i=r_{i+1}r_is_{i+1}$ hold. Let us prove the equality  $s_is_{i+1}s_i=s_{i+1}s_is_{i+1}$ in details. By the definition of the elements $s_1,s_2,\dots,s_{n-1}$ the automorphisms  $s_is_{i+1}s_i$, $s_{i+1}s_is_{i+1}$ have the following forms.
\begin{align*}
s_is_{i+1}s_i &:
\begin{cases}
x_i\overset{s_i}{\longmapsto} x_{i+1}y_{i+1}\overset{s_{i+1}}{\longmapsto} x_{i+2}y_{i+2}y_{i+1}\overset{s_{i}}{\longmapsto}x_{i+2}y_{i+2}y_{i+1},\\
x_{i+1}\overset{s_{i}}{\longmapsto} x_{i}y_{i+1}^{-1}\overset{s_{i+1}}{\longmapsto} x_{i}y_{i+1}^{-1}\overset{s_{i}}{\longmapsto} x_{i+1},\\
x_{i+2}\overset{s_{i}}{\longmapsto} x_{i+2}\overset{s_{i+1}}{\longmapsto} x_{i+1}y_{i+2}^{-1}\overset{s_{i}}{\longmapsto} x_iy_{i+1}^{-1}y_{i+2}^{-1},\\
\end{cases} \\
s_{i+1}s_is_{i+1} &:
\begin{cases}
x_i\overset{s_{i+1}}{\longmapsto}x_i\overset{s_{i}}{\longmapsto} x_{i+1}y_{i+1}\overset{s_{i+1}}{\longmapsto} x_{i+2}y_{i+2}y_{i+1},\\
x_{i+1}\overset{s_{i+1}}{\longmapsto} x_{i+2}y_{i+2}\overset{s_{i}}{\longmapsto} x_{i+2}y_{i+2}\overset{s_{i+1}}{\longmapsto} x_{i+1},\\
x_{i+2}\overset{s_{i+1}}{\longmapsto} x_{i+1}y_{i+2}^{-1}\overset{s_{i}}{\longmapsto} x_iy_{i+1}^{-1}y_{i+2}^{-1}\overset{s_{i+1}}{\longmapsto} x_iy_{i+1}^{-1}y_{i+2}^{-1},\\
\end{cases} 
\end{align*}
therefore, $s_is_{i+1}s_i=s_{i+1}s_is_{i+1}$ for all $i=1,2,\dots,n-2$. The equalities $r_ir_{i+1}r_i=r_{i+1}r_ir_{i+1}$, $s_ir_{i+1}r_i=r_{i+1}r_is_{i+1}$ follow from the following formulas which by the analogy to the equality $s_is_{i+1}s_i=s_{i+1}s_is_{i+1}$ follow from the direct calculations.
\begin{align*}
r_ir_{i+1}r_i &:
\begin{cases}
x_i\mapsto x_{i+2},\\
x_{i+1}\mapsto x_{i+1},\\
x_{i+2}\mapsto x_i,\\
y_{i} \mapsto y_{i+2},\\
y_{i+1} \mapsto y_{i+1},\\
y_{i+2} \mapsto y_{i},\\
\end{cases} &\qquad
r_{i+1}r_ir_{i+1} &:
\begin{cases}
x_i\mapsto x_{i+2},\\
x_{i+1}\mapsto x_{i+1},\\
x_{i+2}\mapsto x_i,\\
y_{i} \mapsto y_{i+2},\\
y_{i+1} \mapsto y_{i+1},\\
y_{i+2} \mapsto y_{i},\\
\end{cases} \\ 
r_{i+1}r_is_{i+1} &:
\begin{cases}
x_i\mapsto x_{i+2}y_{i+2},\\
x_{i+1}\mapsto x_{i+1}y_{i+2}^{-1},\\
x_{i+2}\mapsto x_i,\\
y_{i} \mapsto y_{i+1},\\
y_{i+1} \mapsto y_{i+2},\\
y_{i+2} \mapsto y_{i},\\
\end{cases} &\qquad
s_ir_{i+1}r_i &:
\begin{cases}
x_i\mapsto x_{i+2}y_{i+2},\\
x_{i+1}\mapsto x_{i+1}y_{i+2}^{-1},\\
x_{i+2}\mapsto x_i,\\
y_{i}\mapsto y_{i+1},\\
y_{i+1}\mapsto y_{i+2},\\
y_{i+2}\mapsto y_{i}.\\
\end{cases} 
\end{align*}
Therefore  $\theta$ preserves all defining relations (\ref{def2})-(\ref{def8}), (\ref{def1}) of $FVB_n$, and hence $\theta$ induces the representation $\theta:FVB_n\to{\rm Aut}(F_{2n})$. Finally, $r_is_{i+1}s_i(y_i)=y_{i+1}\neq y_i=s_{i+1}s_ir_{i+1}(y_i)$, therefore $\theta$ does not preserve the forbidden relations.    
\end{proof}

Note that the representation $\theta:FVB_n\to {\rm Aut}(F_{2n})$ introduced in Theorem~\ref{goodrepr} gives a positive answer to the problem \cite[\S8.3, Problem~4]{problems}.

\subsection{Properties of ${\rm ker}(\theta)$}\label{kerrr}
The following proposition says, that if $n=2$, then the representation $\theta$ is faithful. 
\begin{proposition}
The representation $\theta:FVB_2\to {\rm Aut}(F_4)$ is faithful.
\end{proposition}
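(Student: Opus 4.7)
For $n=2$ the index constraints in relations (\ref{def2})--(\ref{def8}) are vacuous, so the defining presentation of $FVB_2$ collapses to $\langle\sigma_1,\rho_1\mid\sigma_1^2,\rho_1^2\rangle$, i.e.\ $FVB_2$ is the infinite dihedral group $\mathbb{Z}/2*\mathbb{Z}/2$. Setting $t=\sigma_1\rho_1$, every element of $FVB_2$ has a unique normal form either $t^k$ or $t^k\sigma_1$ for some $k\in\mathbb{Z}$. The plan is therefore to show that the automorphisms $\theta(t^k)$ and $\theta(t^k\sigma_1)$ $(k\in\mathbb{Z})$ are pairwise distinct, which is exactly faithfulness of $\theta$ on $FVB_2$.

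The main tool is to track the image of the single generator $x_1$. A direct computation gives $\theta(t)(x_1)=x_1y_1$, and $\theta(t)$ swaps $y_1\leftrightarrow y_2$. Because $\theta(t)$ acts on the subgroup $\langle y_1,y_2\rangle$ as this simple swap, a routine induction on $|k|$ yields $\theta(t^k)(x_1)=x_1\,\omega_k$, where for $k\ge 0$ the tail $\omega_k$ is the alternating word $y_1y_2y_1y_2\cdots$ of length $k$ starting with $y_1$, and for $k<0$ it is the alternating word $y_2^{-1}y_1^{-1}y_2^{-1}\cdots$ of length $|k|$. Since $s_1=\theta(\sigma_1)$ fixes $y_1$ and $y_2$, I also obtain $\theta(t^k\sigma_1)(x_1)=s_1(x_1\,\omega_k)=x_2y_2\,\omega_k$ for every $k$.

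To finish, I would check that all these reduced words in $F_4$ are pairwise distinct. The words $x_1\,\omega_k$ are already reduced, and the alternating $y$-tail $\omega_k$, together with the sign of its letters, uniquely determines $k$. The words $x_2y_2\,\omega_k$ may undergo a single cancellation between $y_2$ and a leading $y_2^{-1}$ when $k<0$, but this never reaches $x_2$, so they all begin with the letter $x_2$ and again $\omega_k$ is recovered from the tail. Since an $x_1$-word and an $x_2$-word can never coincide in $F_4$, the two families are separated, and this proves that $\theta$ is injective on $FVB_2$.

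I do not anticipate any genuine obstacle: the whole argument hinges on the observation that $\theta(t)$ acts on $\langle y_1,y_2\rangle$ by the swap $y_1\leftrightarrow y_2$, so each further application of $t$ appends exactly one predictable letter to a growing alternating $y$-word, and the length of that tail records the exponent $k$ uniquely.
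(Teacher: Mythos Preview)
Your argument is correct and follows essentially the same route as the paper: identify $FVB_2$ with the infinite dihedral group, pick a normal form built from powers of $\sigma_1\rho_1$, and track the image of $x_1$ (the paper also looks at $y_1$ to impose a parity condition before splitting into two cases, whereas you handle all powers at once via the alternating $y$-tail, but this is a cosmetic difference). Your treatment of the single cancellation in $x_2y_2\,\omega_k$ for $k<0$ is the only delicate point, and you have it right.
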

\begin{proof}
The group $FVB_2=\langle \sigma_1,\rho_1~|~\sigma_1^2=\rho_1^2=1\rangle$ is the infinite dihedral group. Therefore, every element $A$ of this group can be uniquely written in the form $A=\rho_1^{\varepsilon}(\sigma_1\rho_1)^n$ for some $\varepsilon\in\{0,1\}$, $n\in \mathbb{Z}$. Denote by $s_1=\theta(\sigma_1)$, $r_1=\theta(\rho_1)$, and let $A=\rho_1^{\varepsilon}(\sigma_1\rho_1)^n$ be an element from ${\rm ker}(\theta)$. Since $A\in {\rm ker}(\theta)$ we have the equality $y_1=\theta(A)(y_1)=r_1^{\varepsilon}(s_1r_1)^{n}(y_1)$. Since $r_1$ permutes $y_1$ and $y_2$, and $s_1$ fixes $y_1,y_2$, from the last equality follows that $r_1^{\varepsilon+n}(y_1)=y_1$. It means that either $\varepsilon=0$ and $n$ is even, or $\varepsilon=1$ and $n$ is odd. Let us consider these two cases separately.

Let $\varepsilon=0$ and $n=2k$ be even. The automorphism $(s_1r_1)^2$ has the following form
$$
(r_1s_1)^2:\begin{cases}
x_1\overset{r_1}{\longmapsto}x_2\overset{s_1}{\longmapsto}x_1y_2^{-1}\overset{r_1}{\longmapsto}x_2y_1^{-1}\overset{s_1}{\longmapsto}x_1y_2^{-1}y_1^{-1},\\
x_2\overset{r_1}{\longmapsto}x_1\overset{s_1}{\longmapsto}x_2y_2\overset{r_1}{\longmapsto}x_1y_1\overset{s_1}{\longmapsto}x_2y_2y_1.
\end{cases}
$$
Therefore, the automorphism $\theta(A)=(r_1s_1)^{2k}=((r_1s_1)^2)^{k}$ has the form
\begin{equation}\label{simpleoddpower}
(r_1s_1)^{2k}:\begin{cases}
x_1\mapsto x_1(y_1y_2)^{-k},\\
x_2\mapsto x_2(y_2y_1)^k.
\end{cases}
\end{equation}
It is clear that this automorphism is identity if and only if $k=0$, i.~e. in the case when $A=1$. 

Let $\varepsilon=1$ and $n=2k+1$ be odd. In this case $\theta(A)=r_1s_1r_1 (r_1s_1)^{2k}$. 
From equality (\ref{simpleoddpower}) follows that this automorphism maps the element $x_1$ to the element $x_2(y_2y_1)^ky_1^{-1}$, which is never equal to $x_1$
$$
r_1s_1r_1 (r_1s_1)^{2k}:
x_1\overset{r_1}{\longmapsto}x_2\overset{s_1}{\longmapsto}x_1y_2^{-1}\overset{r_1}{\longmapsto}x_2y_1^{-1}\xmapsto{(r_1s_1)^{2k}}x_2(y_2y_1)^ky_1^{-1}.
$$
Therefore we proved, that the element $A=\rho_1^{\varepsilon}(\sigma_1\rho_1)^n$ belongs to the kernel of $\theta$ if and only if $\varepsilon=0$ and $k=0$, i.~e. in the case when $A=1$. It means that the representation $\theta:FVB_2\to {\rm Aut}(F_4)$ is faithful.
\end{proof}

The following proposition says that the representation  $\theta:FVB_n\to {\rm Aut}(F_{2n})$ introduced in Theorem~\ref{goodrepr} is not faithful, i.~e. the kernel ${\rm ker}(\theta)$ is not trivial.
\begin{proposition}\label{nontrivker}If $n\geq 3$, then ${\rm ker}(\theta)\neq1$.
\end{proposition}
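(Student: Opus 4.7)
The plan is to exhibit an explicit nontrivial element of $\ker\theta$ for every $n\geq 3$. The candidate is
\[
w = (x_{1,3}\,x_{2,3})^3 = (\rho_2\sigma_1\rho_2\,\sigma_2)^3 \in FVB_n,
\]
which involves only the generators with indices in $\{1,2,3\}$ and is therefore well-defined for all $n\geq 3$.

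To verify $\theta(w)=\mathrm{id}$: since $x_{1,3}, x_{2,3}\in FH_n=\ker\iota_2$, both $\theta(x_{1,3})$ and $\theta(x_{2,3})$ fix every $y_j$. A direct calculation (analogous to those in the proof of Theorem~\ref{goodrepr}) gives
\[
\theta(x_{1,3}):\ x_1\mapsto x_3y_3,\ x_3\mapsto x_1y_3^{-1};\qquad \theta(x_{2,3})=s_2:\ x_2\mapsto x_3y_3,\ x_3\mapsto x_2y_3^{-1},
\]
with all remaining generators fixed. Composing, $\theta(x_{1,3}x_{2,3})$ restricts on $\{x_1,x_2,x_3\}$ to the decorated $3$-cycle $x_1\mapsto x_2$, $x_2\mapsto x_3y_3$, $x_3\mapsto x_1y_3^{-1}$, and cubing it makes the $y_3$-factors cancel pairwise and returns the identity.

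To verify $w\ne 1$ in $FVB_n$: since $\iota_1(x_{1,3}x_{2,3})$ is a $3$-cycle in $S_n$, one has $\iota_1(w)=1$, so $w\in FVP_n$. Using $\sigma_i=\rho_i\lambda_{i,i+1}$ and iteratively applying the conjugation formula $\rho\,\lambda_{i,j}=\lambda_{\rho(i),\rho(j)}\,\rho$ from Proposition~\ref{conjugation} --- that is, pushing the permutation factor $\pi=\rho_2\rho_1$ (a $3$-cycle in $S_3$) to the right through the pure part and exploiting $\pi^3=1$ --- one obtains
\[
w = \lambda_{2,3}\,\lambda_{1,3}^{-2}\,\lambda_{1,2}^{2}\,\lambda_{2,3}\in FVP_n.
\]
By Proposition~\ref{vpngen}(2) all defining relations of $FVP_n$ abelianise to $0=0$, so the abelianisation of $FVP_n$ is the free abelian group on $\{\overline{\lambda_{i,j}}\}_{1\leq i<j\leq n}$; the image of $w$ there is $2\overline{\lambda_{2,3}}+2\overline{\lambda_{1,2}}-2\overline{\lambda_{1,3}}\ne 0$, so $w\ne 1$.

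The main obstacle is the book-keeping in rewriting $w$ in terms of the $\lambda$-generators: one must carry out three iterations of ``push $\pi$ through pure factors'' while tracking how the $3$-cycle $\pi$ permutes the index pairs. The remaining two steps --- computing $\theta(x_{1,3})$ and $\theta(x_{2,3})$, and the final abelianisation check --- are routine.
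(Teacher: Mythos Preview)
Your argument is correct in substance, and in fact your element coincides with the paper's: using the mixed relation $\rho_1\rho_2\sigma_1=\sigma_2\rho_1\rho_2$ one checks $x_{1,3}=\rho_2\sigma_1\rho_2=\rho_1\sigma_2\rho_1$, so
\[
w=(x_{1,3}x_{2,3})^3=(\rho_1\sigma_2\rho_1\cdot\sigma_2)^3=(\rho_1\sigma_2)^6,
\]
which is exactly the element $A$ the paper uses. Your verification that $\theta(w)=\mathrm{id}$ is the same computation as the paper's, just organised via $x_{1,3},x_{2,3}$ rather than via $(r_1s_2)^3$.

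Where your proof genuinely differs is in showing $w\neq 1$. The paper invokes the explicit isomorphism $FVP_3\cong\mathbb{Z}^2*\mathbb{Z}$ from Proposition~\ref{FVP31} and rewrites $A=(a^{-1}bc^{-1}b^2)^2$ there; this requires appealing to the inclusion $FVP_3\leq FVB_n$. Your route through the abelianisation of $FVP_n$ is more elementary and works uniformly for all $n\geq 3$ without any such inclusion, which is a small gain.

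One correction: your displayed $\lambda$-word is not quite right. Carrying out the ``push $\pi=\rho_2\rho_1$ to the right'' procedure carefully (with $\pi(1)=3$, $\pi(2)=1$, $\pi(3)=2$) gives
\[
w=\lambda_{1,3}^{-1}\lambda_{1,2}\lambda_{2,3}\,\lambda_{1,3}^{-1}\lambda_{1,2}\lambda_{2,3},
\]
not $\lambda_{2,3}\lambda_{1,3}^{-2}\lambda_{1,2}^{2}\lambda_{2,3}$; these are distinct elements of $FVP_3$ (compare their normal forms in $\mathbb{Z}^2*\mathbb{Z}$). This does not affect your conclusion, since both words have the same image $2\overline{\lambda_{1,2}}-2\overline{\lambda_{1,3}}+2\overline{\lambda_{2,3}}\neq 0$ in the abelianisation, and only that image is needed.
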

\begin{proof}Consider the element $A=(\rho_1\sigma_2)^6$ in $FVB_n$, and let us prove that this element is the non-trivial element from $FVB_n$ which belongs to the kernel of $\theta$. Denote by $r_1=\theta(\rho_1)$, $s_2=\theta(\sigma_2)$. Then from the direct calculations it follows that the automorphisms $(r_1s_2)^3$ has the following form
$$(r_1s_2)^3:\begin{cases}
x_1\overset{r_1}{\longmapsto}x_2\overset{s_2}{\longmapsto}x_3y_3\overset{r_1}{\longmapsto}x_3y_3\overset{s_2}{\longmapsto}x_2\overset{r_1}{\longmapsto}x_1\overset{s_2}{\longmapsto}x_1,\\
x_2\overset{r_1}{\longmapsto}x_1\overset{s_2}{\longmapsto}x_1\overset{r_1}{\longmapsto}x_2\overset{s_2}{\longmapsto}x_3y_3\overset{r_1}{\longmapsto}x_3y_3\overset{s_2}{\longmapsto}x_2,\\
x_3\overset{r_1}{\longmapsto}x_3\overset{s_2}{\longmapsto}x_2y_3^{-1}\overset{r_1}{\longmapsto}x_1y_3^{-1}\overset{s_2}{\longmapsto}x_1y_3^{-1}\overset{r_1}{\longmapsto}x_2y_3^{-1}\overset{s_2}{\longmapsto}x_3,\\
y_1 \overset{r_1}{\longmapsto} y_2 \overset{s_2}{\longmapsto}y_2 \overset{r_1}{\longmapsto} y_1 \overset{s_2}{\longmapsto}y_1 \overset{r_1}{\longmapsto}y_2 \overset{s_2}{\longmapsto}y_2,\\
y_2 \overset{r_1}{\longmapsto} y_1 \overset{s_2}{\longmapsto}y_1 \overset{r_1}{\longmapsto} y_2 \overset{s_2}{\longmapsto}y_2 \overset{r_1}{\longmapsto}y_1 \overset{s_2}{\longmapsto}y_1,\\
y_3 \overset{r_1}{\longmapsto} y_3 \overset{s_2}{\longmapsto}y_3 \overset{r_1}{\longmapsto} y_3 \overset{s_2}{\longmapsto}y_3 \overset{r_1}{\longmapsto}y_3 \overset{s_2}{\longmapsto}y_3.
\end{cases}
$$
From this formula it is clear that $\theta(A)=((r_1s_2)^3)^2=id$, i.~e. $A$ belongs to ${\rm ker}(\theta)$.

Let us prove that the element $A=(\rho_1\sigma_2)^6$ is not trivial. Since there is an ascending series of groups
$$FVB_n\geq FVB_{n-1}\geq\dots\geq FVB_3\geq FVP_3,$$
in order to prove that the element $A$ is not trivial, it is enough to prove that $A$ is not trivial in $FVP_3$. For doing this let us rewrite the element $A$ in terms of the generators $a,b,c$ given in Proposition~\ref{FVP31}. Using Proposition~\ref{conjugation} and formulas (\ref{lambda}), (\ref{labc}) the element $A$ can be rewritten in the following form
\begin{align*}
A&=(\rho_1\sigma_2\rho_1\sigma_2\rho_1\sigma_2)^2\\
&=(\rho_1\rho_2(\rho_2\sigma_2)\rho_1\rho_2(\rho_2\sigma_2)\rho_1\rho_2(\rho_2\sigma_2))^2\\
&=(\rho_1\rho_2\lambda_{2,3}\rho_1\rho_2\lambda_{2,3}\rho_1\rho_2\lambda_{2,3})^2\\
&=(\rho_1\rho_2\rho_1\rho_2\lambda_{1,2}\lambda_{2,3}\rho_1\rho_2\lambda_{2,3})^2\\
&=(\rho_1\rho_2\rho_1\rho_2\rho_1\rho_2\lambda_{1,3}^{-1}\lambda_{1,2}\lambda_{2,3})^2\\
&=(\lambda_{1,3}^{-1}\lambda_{1,2}\lambda_{2,3})^2\\
&=(a^{-1}bc^{-1}b^2)^2.
\end{align*}
It is clear that this element is not trivial in the group $FVP_3 = \langle a, b, c ~|~ [a,c] = 1 \rangle$.
\end{proof}
\begin{remark}\label{bigkernel}{\rm Proposition~\ref{nontrivker} says that if $n\geq3$, then the element $(\rho_1\sigma_2)^6$ is a non-trivial element in ${\rm ker}(\theta)$. By the analogy to Proposition~\ref{nontrivker} it is easy to show that the element $(\rho_i\sigma_{i+1})^6$ is also a non-trivial element in ${\rm ker}(\theta)$ for each $i=1,2,\dots,n-2$. This observation implies that the normal closure of the subgroup $\langle(\rho_i\sigma_{i+1})^6~|~i=1,2,\dots,n-2\rangle$ in $FVB_n$ belongs to the kernel ${\rm ker}(\theta)$.}
\end{remark}
Remark~\ref{bigkernel} says that the kernel of the representation $\theta$ is quite big. Despite this fact, the kernel ${\rm ker}(\theta)$ has trivial intersections with some natural subgroups of $VB_n$. For example, by Proposition~\ref{FVP31} the group $FVP_3$ has the form
$$FVP_3=\langle a,b,c~|~[a,c]=1\rangle,$$
where $a =\rho_2\sigma_2\rho_2\rho_1\sigma_1\rho_2$, $b = \rho_2\sigma_2$,  $c =\rho_2\sigma_2\sigma_1\rho_1$. This group has an abelian subgroup $H$ generated by the elements $a,c$. Let us show that ${\rm ker}(\theta)\cap H=1$. Denote by $\theta(a)=\alpha$, $\theta(c)=\beta$. From direct calculations and the definition of the representation $\theta:FVB_n\to{\rm Aut}(F_{2n})$ it follows that the automorphisms $\alpha,\beta$ act on the generators of $F_{2n}$ by the following rules
\begin{align*}
 \alpha : 
\begin{cases}
x_1 \rightarrow x_1 y_3^{-1},\\
x_2 \rightarrow x_2 y_1^{-1},\\
x_3 \rightarrow x_3 y_3 y_1,\\
y_1 \rightarrow y_3,\\
y_2 \rightarrow y_1,\\
y_3 \rightarrow y_2,
\end{cases}&&
 \beta :
\begin{cases}
x_1 \rightarrow x_1 y_1,\\
x_2 \rightarrow x_2 y_1^{-1} y_3^{-1},\\
x_3 \rightarrow x_3 y_3,\\
y_1 \rightarrow y_2,\\
y_2 \rightarrow y_3,\\
y_3 \rightarrow y_1.
\end{cases}
\end{align*}
From these equalities and direct calculations it follows that for an arbitrary integer $r$ the automorphism $(\alpha\beta)^r$ acts on the generators of $F_{2n}$ by the following rule
$$
(\alpha \beta)^r :
\begin{cases}
x_1 \rightarrow x_1,\\
x_2 \rightarrow x_2 (y_2 y_3 y_1)^{-r},\\
x_3 \rightarrow x_3 (y_3 y_1 y_2)^r,\\
y_1 \rightarrow y_1,\\
y_2 \rightarrow y_2,\\
y_3 \rightarrow y_3.
\end{cases}
$$
Let $A=a^rc^s=(ac)^rc^{s-r}$ be an element from ${\rm ker}(\theta)\cap H$. Since $A$ belongs to ${\rm ker}(\theta)$, the automorphism $\theta(A)=(\alpha\beta)^r\beta^{s-r}$ has to fix the generator $x_1$. This fact implies that  $s-r = 0$ since otherwise we have $(\alpha \beta)^r \beta^{s-r} (x_1)= \beta^{s-r} (x_1)\neq x_1$. If $s-r = 0$, and $r \neq 0$, then $(\alpha \beta)^r (x_2)=x_2(y_2y_3y_1)^{-r} \neq x_2$, what contradicts the fact that $A$ belongs to ${\rm ker}(\theta)$. Therefore $s=r =0$ and ${\rm ker}(\theta)\cap H = 1$.  

In the general situation we have the following restriction on the kernel ${\rm ker}(\theta)$ of the representation $\theta:FVB_n\to{\rm Aut}(F_{2n})$.
\begin{proposition}\label{kerisinintern}If $n\geq 3$, then ${\rm ker}(\theta)\leq FVP_n\cap FH_n$.
\end{proposition}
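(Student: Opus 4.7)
The plan is to exhibit two natural ``forgetful'' quotients of the target $\mathrm{Aut}(F_{2n})$ through which $\theta$ factors, and to show that these factorizations realize exactly the homomorphisms $\iota_1,\iota_2:FVB_n\to S_n$ whose kernels are $FVP_n$ and $FH_n$. Once this is done, any $A\in\ker(\theta)$ automatically satisfies $\iota_1(A)=\iota_2(A)=1$, so it lies in $FVP_n\cap FH_n$.

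First, to recover $\iota_2$, I would look at the action on the $y$-generators alone. From the formulas (\ref{mainrepr}) the automorphism $s_i=\theta(\sigma_i)$ fixes each $y_j$, and $r_i=\theta(\rho_i)$ permutes the generators $y_1,\dots,y_n$ by the transposition $(i,i+1)$. Consequently the free subgroup $Y=\langle y_1,\dots,y_n\rangle\leq F_{2n}$ is setwise preserved by every element of $\theta(FVB_n)$, and restriction to $Y$ defines a group homomorphism $\theta(FVB_n)\to\mathrm{Aut}(Y)$ whose image is precisely the symmetric group $S_n$ permuting the $y_j$'s. Composing with $\theta$ yields a homomorphism $FVB_n\to S_n$ sending $\sigma_i\mapsto 1$ and $\rho_i\mapsto(i,i+1)$, which is exactly $\iota_2$. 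Hence $\theta(A)=\mathrm{id}$ forces $\iota_2(A)=1$, i.e.\ $A\in FH_n$.

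Second, to recover $\iota_1$, I would pass to the quotient $F_{2n}/N$, where $N$ is the normal closure of $\{y_1,\dots,y_n\}$ in $F_{2n}$. Since every $s_i$ and $r_i$ sends each $y_j$ to a product of $y$'s, both preserve $N$ and therefore induce automorphisms of the free group $F_{2n}/N\cong F_n$ on $\bar x_1,\dots,\bar x_n$. Reading the formulas modulo $N$, the element $s_i$ induces the swap $\bar x_i\leftrightarrow\bar x_{i+1}$ (the $y_{i+1}^{\pm 1}$ factors vanish) and $r_i$ induces the same swap. Thus the composite $FVB_n\xrightarrow{\theta}\mathrm{Aut}(F_{2n})\to\mathrm{Aut}(F_n)$ lands in the permutation subgroup $S_n\leq\mathrm{Aut}(F_n)$ and coincides with $\iota_1$. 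If $\theta(A)=\mathrm{id}$ then this composite kills $A$, so $\iota_1(A)=1$ and $A\in FVP_n$.

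Combining the two conclusions gives $\ker(\theta)\leq FVP_n\cap FH_n$. There is essentially no obstacle: the only thing to verify carefully is that the explicit formulas (\ref{mainrepr}) really do leave $Y$ (for the $\iota_2$ argument) and the normal closure $N$ (for the $\iota_1$ argument) invariant, and that the induced action on the relevant quotient agrees with the symmetric group action defining $\iota_1$ and $\iota_2$; both checks are immediate inspections of the defining formulas.
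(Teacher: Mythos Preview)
Your proposal is correct and follows essentially the same route as the paper's proof: the paper also recovers $\iota_2$ by restricting the $\theta$-action to the set $\{y_1,\dots,y_n\}$ (obtaining a map to $S_n$ that sends $\sigma_i\mapsto 1$, $\rho_i\mapsto(i,i+1)$), and recovers $\iota_1$ by passing to the quotient $F_{2n}/N$ with $N$ the normal closure of $\{y_1,\dots,y_n\}$. The only cosmetic difference is that the paper treats $Y$ as the set $\{y_1,\dots,y_n\}$ and maps to $\operatorname{Sym}(Y)$ directly, while you view $Y$ as the free subgroup and restrict to $\operatorname{Aut}(Y)$; since the image lies in the permutation subgroup this amounts to the same thing.
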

\begin{proof}Let $A$ be an arbitrary element from ${\rm ker}(\theta)$, and let us prove that $A\in FVP_n\cap FH_n$. Recall that the representation $\theta$ acts on the generators $\sigma_1,\sigma_2,\dots,\sigma_{n-1},\rho_1,\rho_2,\dots,\rho_{n-1}$ of $FVB_n$ by the following rules
	\begin{align}\label{mainrepr1}
	\theta(\sigma_i):
	\begin{cases}
	x_i \mapsto x_{i+1}y_{i+1},\\
	x_{i+1} \mapsto x_iy_{i+1}^{-1},\\
	\end{cases}&&
	\theta(\rho_i):
	\begin{cases}
	x_i \mapsto x_{i+1},\\
	x_{i+1} \mapsto x_i,\\
	y_i \mapsto y_{i+1},\\
	y_{i+1} \mapsto y_i.\\
	\end{cases}
	\end{align}
	
Let us prove that $A$ belongs to $FH_n$. Denote by $Y=\{y_1,y_2,\dots,y_n\}$	the subset in $F_{2n}$.  From formulas (\ref{mainrepr1}) it is clear that the set $Y$ is $\theta(FVB_n)$-invariant. Denote by $\varphi:VB_n\to {\rm Sym}(Y)=S_n$ the representation induced by $\theta$. From formulas (\ref{mainrepr1}) it follows that for $i=1,2,\dots,n-1$ the map $\varphi$ sends the generator $\sigma_i$ to the trivial permutation of $Y=\{y_1,y_2,\dots,y_n\}$, and sends the generator $\rho_i$ to the transposition $(y_i,~y_{i+1})$ of $Y=\{y_1,y_2,\dots,y_n\}$. Comparing $\varphi$ with $\iota_2$ we see that $\varphi=\iota_2$. Since $A$ is an element from  ${\rm ker}(\theta)$, and the representation  $\varphi:VB_n\to {\rm Sym}(Y)=S_n$ is induced by $\theta$, it is clear that $A$ is an element from ${\rm ker}(\varphi)={\rm ker}(\iota_2)=FH_n$.

Let us prove that $A$ belongs to $FVP_n$. Denote by $G$ the subgroup of $F_{2n}$ which is the normal closure of the subgroup $\langle y_1,y_2,\dots,y_n\rangle$ in $F_{2n}$. From formulas (\ref{mainrepr1}) it is clear that the subgroup $G$ is $\theta(FVB_n)$-invariant. Denote by $\psi:VB_n\to {\rm Aut}(F_{2n}/G)={\rm Aut}(F_{n})$ the representation induced by $\theta$, where  $F_{2n}/G=F_n=\langle x_1,x_2,\dots,x_n\rangle$. From formulas (\ref{mainrepr1}) it follows that for $i=1,2,\dots,n-1$ the map $\psi$ acts on the generators of $FVB_n$ by the following formulas
\begin{align*}
\psi(\sigma_i):
\begin{cases}
x_i \mapsto x_{i+1},\\
x_{i+1} \mapsto x_i,\\
\end{cases}&&
\psi(\rho_i):
\begin{cases}
x_i \mapsto x_{i+1},\\
x_{i+1} \mapsto x_i.
\end{cases}
\end{align*}
These maps act on the set $\{x_1,x_2,\dots,x_n\}$, sending the generators  $\sigma_i, \rho_i$ to the transposition $(x_i,x_{i+1})$ of $X=\{x_1,x_2,\dots,x_n\}$. Comparing $\psi$ with $\iota_1$ we see that $\psi=\iota_1$. Since $A$ is an element from  ${\rm ker}(\theta)$, and the representation  $\psi:VB_n\to {\rm Aut}(F_n)$ is induced by $\theta$, it is clear that $A$ is an element from ${\rm ker}(\psi)={\rm ker}(\iota_1)=FVP_n$.
\end{proof}
Propositions~\ref{nontrivker}, \ref{kerisinintern} imply the following corollary, which localize the kernel of the representation~$\theta$.
\begin{corollary}\label{corker}For $n\geq 3$ denote by $N$ the normal closure of the subgroup $$\langle(\rho_i\sigma_{i+1})^6~|~i=1,2,\dots,n-2\rangle$$ 
	in $FVB_n$. Then $N\leq {\rm ker}(\theta)\leq FVP_n\cap FH_n$.
\end{corollary}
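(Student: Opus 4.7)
The corollary has two inclusions, and both follow almost immediately from the results that precede it, so the plan is essentially to assemble the pieces.

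For the upper bound ${\rm ker}(\theta)\leq FVP_n\cap FH_n$, I would simply invoke Proposition~\ref{kerisinintern}, which states exactly this containment for $n\geq 3$. No further work is required here.

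For the lower bound $N\leq{\rm ker}(\theta)$, I would argue as follows. By Remark~\ref{bigkernel} (which generalizes the computation of Proposition~\ref{nontrivker} from $i=1$ to arbitrary $i=1,2,\dots,n-2$ by the same kind of direct verification), every element of the form $(\rho_i\sigma_{i+1})^6$ lies in ${\rm ker}(\theta)$. Hence the subgroup $\langle(\rho_i\sigma_{i+1})^6~|~i=1,2,\dots,n-2\rangle$ is contained in ${\rm ker}(\theta)$. Now ${\rm ker}(\theta)$ is a normal subgroup of $FVB_n$ (it is the kernel of a homomorphism from $FVB_n$), so it is closed under conjugation by arbitrary elements of $FVB_n$. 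Therefore the normal closure $N$ of this subgroup in $FVB_n$ is also contained in ${\rm ker}(\theta)$.

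Combining the two inclusions yields $N\leq{\rm ker}(\theta)\leq FVP_n\cap FH_n$, as required. There is no genuine obstacle in this argument; the real content has already been established in Proposition~\ref{nontrivker} (the direct automorphism computation showing $(\rho_1\sigma_2)^6\in{\rm ker}(\theta)$ together with the non-triviality argument via $FVP_3$), in Remark~\ref{bigkernel} (the observation that the same argument applies to every index $i$), and in Proposition~\ref{kerisinintern} (the localization of ${\rm ker}(\theta)$ inside $FVP_n\cap FH_n$ via the two induced quotient representations $\varphi=\iota_2$ and $\psi=\iota_1$). The corollary is just the formal packaging of these three statements.
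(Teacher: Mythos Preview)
Your proposal is correct and matches the paper's approach exactly: the paper simply states that the corollary follows from Proposition~\ref{nontrivker} (together with Remark~\ref{bigkernel}) and Proposition~\ref{kerisinintern}, with no additional argument given. Your observation that ${\rm ker}(\theta)$ is normal (hence contains the normal closure $N$) is the only glue needed, and you have supplied it.
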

\begin{question}{\rm Find the kernel ${\rm ker}(\theta)$.} 
\end{question}
\section{Representations related with $\theta$}\label{related}
In this section we study several representations which are related with the representation $\theta$ given in Section~\ref{hitheta}.
\subsection{Linear representations of $FVB_n$}\label{linrepr}
The group $FVB_n$ of flat virtual braids is known to be linear \cite[Corollary 5.4]{BarBelDom}. In this section we construct linear representations of $FVB_n$ related with the representation $\theta:FVB_n\to {\rm Aut}(F_{2n})$. We use classical notation. Symbols ${\rm I}_n$ and $O_{n\times m}$ denote the identity $n\times n$ matrix and the $n\times m$ matrix with zero entries, respectively. If  $A$ is an $n\times n$ matrix and $B$ is an $m\times m$ matrix, then the symbol  $A\oplus B$ denotes the direct sum of the matrices $A$ and $B$, i.~e. the block-diagonal $(m+n)\times (m+n)$ matrix
$$
\newcommand{\tempa}{\multicolumn{1}{c|}{A}}
\newcommand{\tempb}{\multicolumn{1}{|c}{B}}
\begin{pmatrix}
\tempa&O_{n\times m}\\\cline{1-2}
O_{m\times n}&\tempb
\end{pmatrix}
.
$$
 
 There exists a classical procedure of how to construct specific mappings from braid-like groups to some matrix groups using rerepresentations of these braid-like groups by automorphisms of free groups using Fox's derivatives. Sometimes the resulting mappings from braid-like groups to matrix groups are representations. In particular,   using this procedure the Burau representation \cite[Section 3, Example 3]{Bir} and the Gassner representation \cite[Section 3, Example 4]{Bir} can be obtained. Let us recall this procedure. First, let us recall the  definition and the main properties of Fox's derivatives \cite[Chapter 3]{Bir} or \cite[Chapter 7]{CF}.
 
  Let $F_n$ be the free group of rank $n$ with the free generators $x_1, x_2, \ldots, x_n$. Let us consider the group ring $\mathbb{Z}F_n$ of
 the group $F_n$ over  the ring $\mathbb{Z}$ of integers. For $j=1, 2, \dots, n$ let us define the mapping
 $$
 \frac{\partial }{\partial x_j} : \mathbb{Z}F_n\longrightarrow \mathbb{Z}F_n.
 $$
By the definition the mapping $\frac{\partial }{\partial x_j}$ is defined on $F_n$ by the following three equalities
 \begin{alignat*}2
\frac{\partial x_i}{\partial x_j}&=
 \begin{cases}
 1, & \text{ if } i = j , \\
 0, & \text{ if } i\not= j,
 \end{cases}&&
 \\
\frac{\partial x_i^{-1}}{\partial x_j}&=
 \begin{cases}
 -x_i^{-1}, & \text{ if } i = j , \\
 0,         & \text{ if } i\not= j,
 \end{cases}&&\\
\frac{\partial (w v)}{\partial x_j} &= \frac{\partial w}{\partial x_j} (v)^{\tau } +
 w \frac{\partial v}{\partial x_j},&&w, v \in F_n,
 \end{alignat*}
 where $\tau : \mathbb{Z}F_n \longrightarrow
 \mathbb{Z}$ is the augmentation operation which
 sends all elements from $F_n$ to $1$. By the definition  the mapping $\frac{\partial }{\partial x_j}$ is extended from $F_n$ to $\mathbb{Z}F_n$ by the linearity in the following way
 $$
\frac{\partial }{\partial x_j} \left( \sum a_g g\right) =
 \sum a_g \frac{\partial g}{\partial x_j},~~~g\in F_n, a_g \in
 \mathbb{Z}.
 $$
 
 Let $\varphi:F_n\to G$ be a homomorphism from the group $F_n$ to some group $G$. For any element $x$ from $F_n$ denote by $x^{\varphi}$ the image of the element $x$ under the homomorphism $\varphi$. The homomorphism  $\varphi:F_n\to G$ can be extended to the homomorphism $\varphi:\mathbb{Z}F_n\to \mathbb{Z}G$ of group rings. Let $A_{\varphi}$ be a subgroup of ${\rm Aut}(F_n)$ which satisfies the condition $x^{\varphi} = (x^{\alpha})^{\varphi}$ for every $x\in F_n$, $\alpha \in A_{\varphi}$. If $\alpha \in
 A_{\varphi}$, then denote by $[\alpha]$  the $n \times n$
 matrix
\begin{equation}\label{hometheend}
 [\alpha] = \left[ \left( \frac{\partial (x_i^{\alpha })}{\partial
 	x_j} \right)^{\varphi} \right]_{i,j=1}^n
\end{equation}
 with entries in
 $\mathbb{Z}G$. The mapping $\alpha\mapsto [\alpha]$ defines the Magnus mapping
 $$
 \rho : A_{\varphi} \longrightarrow {\rm GL}_n
 (\mathbb{Z}G).
 $$
If $\psi:H\to{\rm Aut}(F_n)$ is the homomorphism of groups, then the superposition $\psi\rho$ is the mapping from $H$ to ${\rm GL}_n(\mathbb{Z}G)$
$$\psi\rho:H\to {\rm GL}_n(\mathbb{Z}G).$$
We say that this mapping $\psi\rho$ is obtained using Magnus approach \cite[Section~3.2]{Bir}. Summing up, in order to construct the mapping $\psi\rho:H\to {\rm GL}_n(\mathbb{Z}G)$ using Magnus approach one needs a homomorphism $\psi:H\to{\rm Aut}(F_n)$ and a homomorphism $\varphi:F_n\to G$.
 
Let us use the Magnus approach in order to construct the linear representations of the flat virtual braid group $FVB_n$. Denote by $G$ the free abelian group of rank $n$
$$G=\langle p_1,p_2,\dots,p_n,q_1,q_2,\dots,q_n~|~[p_i,p_j]=[q_i,q_j]=[p_i,q_j]=1~\text{ for}~i,j=1,2,\dots,n\rangle.$$
Let the map $\varphi:F_{2n}\to G$ is given by the equalities $\varphi(x_i)=p_i$, $\varphi(y_i)=q_i$ for $i=1,2,\dots,n$, where $x_1,x_2,\dots,x_n,y_1,y_2,\dots,y_n$ are the generators of $F_{2n}$. As a homomorphism $\psi$ given in Magnus approach let us take the representations $\theta:FVB_n\to{\rm Aut}(F_{2n})$ given in Section~\ref{hitheta}, so, now we have the representations $\theta:FVB_n\to{\rm Aut}(F_{2n})$ and the homomorphism $\varphi:F_{2n}\to G$, i.~e. we have everything in order to apply the Magnus approach and obtain the mapping 
$$
\Theta=\theta\rho : FVB_n \longrightarrow {\rm GL}_{2n}
(\mathbb{Z}G)={\rm GL}_{2n}
(\mathbb{Z}[p_1^{\pm1}, p_2^{\pm1},\dots,p_n^{\pm1},q_1^{\pm1}, q_2^{\pm1},\dots,q_n^{\pm1}]).
$$
Using the direct calculations in formula (\ref{hometheend}) we conclude that the mapping $\Theta$ acts on the generators $\sigma_1,\sigma_2,\dots,\sigma_{n-1},\rho_1,\rho_2,\dots,\rho_{n-1}$ in the following way
\begin{align*}
\Theta(\sigma_i):\begin{cases}
e_{i} \mapsto e_{i+1} + p_{i+1} f_{i+1},  \\
e_{i+1} \mapsto e_i - p_i q_{i+1}^{-1} f_{i+1}, 
\end{cases}&&\Theta(\rho_i):\begin{cases}
e_{i} \mapsto e_{i+1},   \\
e_{i+1} \mapsto e_i ,  \\
f_{i} \mapsto f_{i+1},   \\
f_{i+1} \mapsto f_i.
\end{cases}
\end{align*}
In order to find conditions under which the mapping $\Theta$ gives a representations, we have to check when the mapping $\Theta$ preserves the relations  (\ref{def2})-(\ref{def8}), (\ref{def1}) of the flat virtual braid group $FVB_n$. Using direct calculations it is easy to see that the mapping $\Theta$  preserves relations  (\ref{def2})-(\ref{def8}), (\ref{def1}) if and only if $p_1=p_2=\dots=p_n$ (we denote all of these elements by $p$), $q_1=q_2=\dots=q_n=1$. This fact implies the following theorem.

\begin{theorem}\label{lingoodrepr}Let $M$ be the free $\mathbb{Z}[p^{\pm1}]$-module with the free basis $e_1,e_2,\dots,e_n,f_1,f_2,\dots,f_n$. Then the map  $\Theta:\{\sigma_1,\sigma_2,\dots,\sigma_{n-1},\rho_1,\rho_2,\dots,\rho_{n-1}\}\to{\rm Aut}(M)$ given by the following formulas
\begin{align*}
\Theta(\sigma_i):\begin{cases}
e_{i} \mapsto e_{i+1} + p f_{i+1},  \\
e_{i+1} \mapsto e_i - p f_{i+1}, 
\end{cases}&&\Theta(\rho_i):\begin{cases}
e_{i} \mapsto e_{i+1},   \\
e_{i+1} \mapsto e_i ,  \\
f_{i} \mapsto f_{i+1},   \\
f_{i+1} \mapsto f_i.
\end{cases}
\end{align*}
induces the linear representation $\Theta:FVB_n\to{\rm Aut}(M)$. Moreover, this representation does not preserve the forbidden relations (\ref{forbidden}).
\end{theorem}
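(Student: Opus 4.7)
The plan is to check that $\Theta$, as defined on generators, respects all the defining relations (\ref{def2})--(\ref{def8}) and (\ref{def1}) of $FVB_n$, and then to exhibit a basis vector on which the two sides of a forbidden relation disagree. Set $S_i := \Theta(\sigma_i)$ and $R_i := \Theta(\rho_i)$. Each of these operators is the identity on every basis vector whose index lies outside $\{i, i+1\}$; in addition $S_i$ fixes $f_i$ and $f_{i+1}$, while $R_i$ simply swaps the pairs $(e_i, e_{i+1})$ and $(f_i, f_{i+1})$. From this it is immediate that $R_i^2 = \mathrm{id}$, and a one-line computation gives $S_i^2(e_i) = S_i(e_{i+1} + p f_{i+1}) = (e_i - p f_{i+1}) + p f_{i+1} = e_i$, establishing $S_i^2 = \mathrm{id}$. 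The far-commutation relations (\ref{def2}), (\ref{def5}), (\ref{def7}) follow at once from disjointness of supports, exactly as in the proof of Theorem~\ref{goodrepr}.

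What remains are the three length-three braid-type relations. Relation (\ref{def6}) is the usual braid relation for transpositions, and both sides realize the double transposition $e_i \leftrightarrow e_{i+2}$, $f_i \leftrightarrow f_{i+2}$, so there is nothing to check. For (\ref{def3}) and (\ref{def8}) I will compute both compositions on the six basis vectors $e_j, f_j$ with $j \in \{i, i+1, i+2\}$. Since each $S_k$ acts trivially on all $f$-vectors, only the three $e$-images require computation, and each is a short $\mathbb{Z}[p^{\pm 1}]$-linear combination of $e_i, e_{i+1}, e_{i+2}, f_{i+1}, f_{i+2}$. These calculations run in structural parallel to the ones already carried out for $\theta$ in the proof of Theorem~\ref{goodrepr}, with the free-group letters $y_j^{\pm 1}$ replaced by the scalar corrections $\pm p f_j$; crucially, because the scalar $p$ is independent of the index and the $y$-contributions have been specialized so that all the $q_j$ equal $1$, the correction terms in the two orderings cancel identically, and this is precisely what (as the paragraph preceding the theorem noted) forces these specializations to be \emph{necessary} for $\Theta$ to descend to $FVB_n$.

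To show $\Theta$ does not preserve the forbidden relation (\ref{forbidden}), I will evaluate both compositions on $f_i$. Under the right-action convention one computes
\[
(R_i S_{i+1} S_i)(f_i) = S_i\bigl(S_{i+1}(R_i(f_i))\bigr) = f_{i+1}, \qquad
(S_{i+1} S_i R_{i+1})(f_i) = R_{i+1}\bigl(S_i(S_{i+1}(f_i))\bigr) = f_i,
\]
since every $S_k$ fixes each $f$-vector, $R_i$ swaps $f_i$ and $f_{i+1}$, and $R_{i+1}$ fixes $f_i$. These results disagree, so $\Theta(\rho_i \sigma_{i+1} \sigma_i) \neq \Theta(\sigma_{i+1} \sigma_i \rho_{i+1})$. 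The only delicate bookkeeping in the whole argument is the braid relation (\ref{def3}) applied to $e_{i+1}$, where the correction coefficients $\pm p f_j$ arising from the different orderings must be tracked carefully; this is the single place where one genuinely uses that $p$ is constant in the index and that the $q$-parameters have been normalized to $1$.
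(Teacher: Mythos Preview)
Your proposal is correct and follows essentially the same route as the paper: the paper derives $\Theta$ via the Magnus/Fox-derivative construction from $\theta$ and then asserts that ``using direct calculations'' the defining relations hold precisely under the specializations $p_1=\dots=p_n=p$, $q_1=\dots=q_n=1$, which is exactly the verification you sketch. Your check of the forbidden relation on $f_i$ is the linear analogue of the paper's check on $y_i$ in the proof of Theorem~\ref{goodrepr}.
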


Often it is more convenient to work with matrices instead of working with actions on of the automorphisms on the basis elements. The following statement is the reformulation of Theorem~\ref{lingoodrepr} in terms of matrices. The matrices are written in the basis $e_1,f_1,e_2,f_2,\dots,e_n,f_n$ instead of $e_1,e_2,\dots,e_n,f_1,f_2,\dots,f_n$.
\begin{corollary}
	The map  $\Theta:\{\sigma_1,\sigma_2,\dots,\sigma_{n-1},\rho_1,\rho_2,\dots,\rho_{n-1}\}\to{\rm GL}_{2n}(\mathbb{Z}[p^{\pm1}])$ given by the following formulas
	\begin{align*}
	\Theta(\sigma_i)&={\rm I}_{2(i-1)}\oplus\begin{pmatrix}0&0&1&~~p\\
	0&1&0&~~0\\
	1&0&0&-p\\
	0&0&0&~~1
	\end{pmatrix}\oplus {\rm I}_{2(n-i-1)},\\
	\Theta(\rho_i)&={\rm I}_{2(i-1)}\oplus\begin{pmatrix}0&0&1&0\\
	0&0&0&1\\
	1&0&0&0\\
	0&1&0&0
	\end{pmatrix}\oplus {\rm I}_{2(n-i-1)}
	\end{align*}
	induces the linear representation $\Theta:FVB_n\to{\rm GL}_{2n}(\mathbb{Z}[p^{\pm1}])$. Moreover, this representation does not preserve the forbidden relations (\ref{forbidden}).
\end{corollary}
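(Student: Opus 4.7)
The plan is to observe that the corollary is nothing more than Theorem~\ref{lingoodrepr} rewritten in matrix form with respect to a specific ordering of a basis of $M$, so essentially all that is needed is careful bookkeeping of coefficients together with the block structure coming from the locality of the action. No new group-theoretic input is required beyond Theorem~\ref{lingoodrepr}.

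First I would fix the ordered basis $(e_1,f_1,e_2,f_2,\dots,e_n,f_n)$ of $M$ and, consistently with the paper's right-action convention $(fg)(x)=g(f(x))$, adopt the convention that the matrix of an endomorphism has, as its $k$-th row, the coordinates of the image of the $k$-th basis vector; under this convention, composition of automorphisms corresponds to the usual product of matrices acting on row vectors from the right. This is the only place where a choice has to be made, and the entries in the stated matrices force exactly this convention.

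Next I would use the formulas of Theorem~\ref{lingoodrepr} to observe that both $\Theta(\sigma_i)$ and $\Theta(\rho_i)$ fix every basis vector with index outside $\{i,i+1\}$: $\Theta(\sigma_i)$ fixes $e_j$ for $j\neq i,i+1$ together with every $f_j$, while $\Theta(\rho_i)$ fixes both $e_j$ and $f_j$ for $j\neq i,i+1$. With the chosen ordering, the basis vectors indexed by $\{1,\dots,i-1\}$ contribute a $2(i-1)\times 2(i-1)$ identity block, the four basis vectors $e_i,f_i,e_{i+1},f_{i+1}$ contribute a $4\times 4$ block, and those indexed by $\{i+2,\dots,n\}$ contribute a $2(n-i-1)\times 2(n-i-1)$ identity block; this gives the direct-sum form $\mathrm{I}_{2(i-1)}\oplus B\oplus \mathrm{I}_{2(n-i-1)}$ asserted in the statement.

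Finally I would read off the two $4\times 4$ middle blocks. For $\Theta(\sigma_i)$, the image of $e_i$ has coordinates $(0,0,1,p)$ in $(e_i,f_i,e_{i+1},f_{i+1})$, of $f_i$ has $(0,1,0,0)$, of $e_{i+1}$ has $(1,0,0,-p)$, and of $f_{i+1}$ has $(0,0,0,1)$, matching the stated block; a parallel, easier calculation with images $e_{i+1},f_{i+1},e_i,f_i$ gives the stated block for $\Theta(\rho_i)$. Therefore the matrices of the corollary are exactly the automorphisms of Theorem~\ref{lingoodrepr} expressed in the chosen basis, so they define a representation $FVB_n\to\mathrm{GL}_{2n}(\mathbb{Z}[p^{\pm 1}])$, and the non-preservation of the forbidden relations~(\ref{forbidden}) is inherited verbatim from Theorem~\ref{lingoodrepr}. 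The only ``obstacle'' is keeping the row/column convention straight so that the four displayed entries of each block match; once that is fixed the verification is entirely mechanical.
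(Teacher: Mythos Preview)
Your proposal is correct and matches the paper's approach exactly: the paper presents this corollary as a direct reformulation of Theorem~\ref{lingoodrepr} in matrix form with respect to the reordered basis $e_1,f_1,e_2,f_2,\dots,e_n,f_n$, giving no separate proof. Your careful handling of the row-vector convention and the block decomposition is precisely the mechanical verification that this reformulation requires.
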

From the definition of $\Theta$ ($\Theta=\theta\rho$) it is clear that the kernel of $\theta$ is a subset in the kernel of $\Theta$, in particular,  the normal closure of the subgroup $\langle(\rho_i\sigma_{i+1})^6~|~i=1,2,\dots,n-2\rangle$ in $FVB_n$ belongs to the kernel of $\Theta$. Let us try to find a linear representation of the group $FVB_n$ such that the elements $(\rho_i\sigma_{i+1})^6$ donot belong to the kernel of this representation for $i=1,2,\dots,n-2$.  In the following theorem we construct a linear representation
 $$\Delta:FVB_n\to {\rm GL}_{3n}(\mathbb{Z})$$
such that the elements $(\rho_i\sigma_{i+1})^6$ donot belong to the kernel ${\rm ker}(\Delta)$.
\begin{theorem}\label{lingoodrepr2}Let $A$ be the free $\mathbb{Z}$-module with the basis $e_1,e_2,\dots,e_n,f_1,f_2,\dots,f_n,g_1,g_2,\dots,g_n$. Then the map  $\Delta:\{\sigma_1,\sigma_2,\dots,\sigma_{n-1},\rho_1,\rho_2,\dots,\rho_{n-1}\}\to{\rm Aut}(A)={\rm GL}_{3n}(\mathbb{Z})$ given by the following formulas
	\begin{align*}
	\Delta(\sigma_i):
	\begin{cases}
	e_i \mapsto e_{i+1}+g_{i},\\
	e_{i+1} \mapsto e_i-g_{i+1},\\
	g_i\mapsto g_{i+1},\\
	g_{i+1}\mapsto g_i,
	\end{cases}&&
	\Delta(\rho_i):
	\begin{cases}
	e_i \mapsto e_{i+1},\\
	e_{i+1} \mapsto e_i,\\
	f_i \mapsto f_{i+1},\\
	f_{i+1} \mapsto f_i,\\
	g_i\mapsto g_{i+1},\\
	g_{i+1}\mapsto g_i
	\end{cases}
	\end{align*}
	induces the linear representation $\Delta:FVB_n\to{\rm GL}_{3n}(\mathbb{Z})$. Moreover, this representation does not preserve the forbidden relations (\ref{forbidden}), and the elements $(\rho_i\sigma_{i+1})^6~(i=1,2,\dots,n-2)$ donot belong to the kernel ${\rm ker}(\Delta)$.
\end{theorem}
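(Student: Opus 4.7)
The plan is to follow the template of Theorems~\ref{goodrepr} and~\ref{lingoodrepr}: verify that $\Delta$ respects every defining relation of $FVB_n$, then produce explicit basis vectors to witness both non-preservation of the forbidden relation and the non-triviality of $\Delta((\rho_i\sigma_{i+1})^6)$.

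Before doing any calculation, I would extract a structural simplification. In the ordered basis $(e_1,\dots,e_n,f_1,\dots,f_n,g_1,\dots,g_n)$ each generator has block form
\[
\Delta(w)=\begin{pmatrix}P(w) & 0 & B(w)\\ 0 & Q(w) & 0\\ 0 & 0 & P(w)\end{pmatrix},
\]
where $P(w)$ is the permutation matrix of $\iota_1(w)\in S_n$, $Q(w)$ is the permutation matrix of $\iota_2(w)\in S_n$, and $B(w)$ is a correction block arising from the $\pm g$-terms in $\Delta(\sigma_i)$. Indeed, on the $f$-block $\sigma_i$ is trivial and $\rho_i$ swaps $f_i,f_{i+1}$, which is precisely the permutation $\iota_2$; on the $g$-block both $\sigma_i$ and $\rho_i$ swap $g_i,g_{i+1}$, which is precisely the permutation $\iota_1$. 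Consequently, every defining relation is automatically respected on the $f$- and $g$-blocks (they are honest permutation representations factoring through $\iota_2$ and $\iota_1$), and it remains to check the relations on the $e$-block, where $\sigma_i$ acts as the transposition of $e_i,e_{i+1}$ twisted by a $\pm g$-correction. Commutations with $|i-j|\geq 2$ are immediate from disjoint supports, $\sigma_i^2=\mathrm{id}$ is a two-step calculation (for instance, $e_i\mapsto e_{i+1}+g_i\mapsto (e_i-g_{i+1})+g_{i+1}=e_i$), and the braid-type relations $\sigma_i\sigma_{i+1}\sigma_i=\sigma_{i+1}\sigma_i\sigma_{i+1}$ and $\rho_i\rho_{i+1}\sigma_i=\sigma_{i+1}\rho_i\rho_{i+1}$ reduce to direct checks on $e_i,e_{i+1},e_{i+2}$ in the style of the proof of Theorem~\ref{goodrepr}.

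To show that the forbidden relation is not preserved, I would evaluate both sides on $f_i$. Since $\Delta(\sigma_j)$ fixes every $f_k$, we get $\Delta(\rho_i\sigma_{i+1}\sigma_i)(f_i)=\Delta(\rho_i)(f_i)=f_{i+1}$, whereas $\Delta(\sigma_{i+1}\sigma_i\rho_{i+1})(f_i)=\Delta(\rho_{i+1})(f_i)=f_i$. This is the exact analogue of the $y_i$-witness used for $\theta$.

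For the last assertion, by symmetry of the generators it suffices to treat $i=1$. Note that $\iota_1(\rho_1\sigma_2)=(1,3,2)$ is a $3$-cycle, so $\Delta((\rho_1\sigma_2)^3)$ is the identity on the $g$-block; this keeps the bookkeeping short. Iterating $\Delta(\rho_1\sigma_2)$ on $e_1$ six times and tracking only the accumulating $g$-correction, I expect to obtain $\Delta((\rho_1\sigma_2)^6)(e_1)=e_1+2g_3-2g_2$, which is manifestly different from $e_1$, so $(\rho_1\sigma_2)^6\notin\ker\Delta$. The main obstacle is simply the careful bookkeeping of the six iterations of the $g$-correction; no new idea beyond the block decomposition and the order-$3$ behaviour on the $g$-coordinates is required.
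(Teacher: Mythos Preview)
Your proof is correct and follows essentially the same route as the paper: the paper likewise defers the relation checks to direct calculations in the style of Theorem~\ref{goodrepr}, uses exactly your witness $f_i$ for the forbidden relation (obtaining $\Delta(\rho_i\sigma_{i+1}\sigma_i)(f_i)=f_{i+1}\neq f_i=\Delta(\sigma_{i+1}\sigma_i\rho_{i+1})(f_i)$), and computes $(\Delta(\rho_i\sigma_{i+1}))^6(e_i)=e_i-2g_{i+1}+2g_{i+2}$, which matches your $e_1+2g_3-2g_2$. Your block decomposition into the $\iota_1$- and $\iota_2$-permutation blocks is a nice organizational touch that goes slightly beyond the paper's terse treatment, but the underlying argument is the same.
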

\begin{proof}In order to check that the map $$\Delta:\{\sigma_1,\sigma_2,\dots,\sigma_{n-1},\rho_1,\rho_2,\dots,\rho_{n-1}\}\to{\rm Aut}(A)={\rm GL}_{3n}(\mathbb{Z})$$
induces the representation $\Delta:FVB_n\to{\rm GL}_{3n}(\mathbb{Z})$ it is enough to verify that the map $\Delta$ preserves all defining relations  (\ref{def2})-(\ref{def8}), (\ref{def1}) of the flat virtual braid group $FVB_n$. This can be done by
direct calculations analougously to the proof of Theorem~\ref{goodrepr}, so, we will not do it here. The fact that $\Delta$ does not preserve the forbidden relations follows from the inequality 
 $$\Delta(\rho_i)\Delta(\sigma_{i+1})\Delta(\sigma_i)(f_i)=f_{i+1}\neq f_i=\Delta(\sigma_{i+1})\Delta(\sigma_i)\Delta(\rho_{i+1})(f_i).$$ 
 Finally, the fact that the elements $(\rho_i\sigma_{i+1})^6$ for $i=1,2,\dots,n-2$ donot belong to the kernel ${\rm ker}(\theta)$ follows from the equality  $(\Delta(\rho_i\sigma_{i+1}))^6(e_i)=e_i-2g_{i+1}+2g_{i+2}$.
\end{proof}
The following statement is the reformulation of Theorem~\ref{lingoodrepr2} in terms of matrices. The matrices are written in the basis $e_1,g_1,f_1,e_2,f_2,g_2,\dots,e_n,f_n,zg_n$ instead of $e_1,e_2,\dots,e_n$, $f_1,f_2,\dots,f_n$, $g_1,g_2,\dots,g_n$.
\begin{corollary}
	The map  $\Delta:\{\sigma_1,\sigma_2,\dots,\sigma_{n-1},\rho_1,\rho_2,\dots,\rho_{n-1}\}\to{\rm GL}_{3n}(\mathbb{Z})$ given by the following formulas
	\begin{align*}
	\Delta(\sigma_i)&={\rm I}_{3(i-1)}\oplus\begin{pmatrix}0&0&1&1&0&~~0\\
	0&1&0&0&0&~~0\\
	0&0&0&0&0&~~1\\
	1&0&0&0&0&-1\\
	0&0&0&0&1&~~0\\
	0&0&1&0&0&~~0
	\end{pmatrix}\oplus {\rm I}_{3(n-i-1)},\\
	\Delta(\rho_i)&={\rm I}_{3(i-1)}\oplus\begin{pmatrix}0&0&0&1&0&0\\
	0&0&0&0&1&0\\
	0&0&0&0&0&1\\
	1&0&0&0&0&0\\
	0&1&0&0&0&0\\
	0&0&1&0&0&0
	\end{pmatrix}\oplus {\rm I}_{3(n-i-1)}
	\end{align*}
induces the linear representation $\Delta:FVB_n\to{\rm GL}_{3n}(\mathbb{Z})$. Moreover, this representation does not preserve the forbidden relations (\ref{forbidden}), and the elements $(\rho_i\sigma_{i+1})^6~(i=1,2,\dots,n-2)$ donot belong to the kernel ${\rm ker}(\Delta)$.
\end{corollary}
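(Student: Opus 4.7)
The plan is to reduce everything to Theorem~\ref{lingoodrepr2}, which already establishes that the linear maps $\Delta(\sigma_i), \Delta(\rho_i)$ induce a representation $FVB_n\to {\rm Aut}(A)={\rm GL}_{3n}(\mathbb{Z})$ failing the forbidden relations and with $(\rho_i\sigma_{i+1})^6 \notin {\rm ker}(\Delta)$. All that remains is to verify that the $6\times 6$ block matrices displayed in the statement are the correct matrix encodings of those linear maps once a suitable ordering of the basis is fixed. Inspecting the displayed matrices, the natural block ordering is $(e_i, f_i, g_i, e_{i+1}, f_{i+1}, g_{i+1})$ — the triples associated with strand $i$ before those associated with strand $i+1$ (the ``$zg_n$'' appearing in the statement of the corollary should be read as $g_n$).

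First I would translate the formulas of Theorem~\ref{lingoodrepr2} row by row, using the right-action convention declared at the start of Section~\ref{hitheta}: the $k$-th row of the matrix is the coordinate vector of the image of the $k$-th basis element. For instance, $\Delta(\sigma_i)$ sends $e_i\mapsto e_{i+1}+g_i$, giving first row $(0,0,1,1,0,0)$; fixes $f_i$, giving second row $(0,1,0,0,0,0)$; sends $g_i\mapsto g_{i+1}$, giving third row $(0,0,0,0,0,1)$; and so on. A mechanical check shows the resulting matrix coincides with the one displayed, and likewise for $\Delta(\rho_i)$. The flanking identity blocks ${\rm I}_{3(i-1)}$ and ${\rm I}_{3(n-i-1)}$ merely record that $\Delta(\sigma_i),\Delta(\rho_i)$ fix the basis vectors $e_j,f_j,g_j$ for $j\notin\{i,i+1\}$.

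Once the matrix description has been identified with the action in Theorem~\ref{lingoodrepr2}, the three assertions of the corollary are immediate. The homomorphism property is exactly the content of that theorem. The non-preservation of the forbidden relation is witnessed by the same computation $\Delta(\rho_i)\Delta(\sigma_{i+1})\Delta(\sigma_i)(f_i)=f_{i+1}\neq f_i=\Delta(\sigma_{i+1})\Delta(\sigma_i)\Delta(\rho_{i+1})(f_i)$ already used there, and the relation $(\Delta(\rho_i\sigma_{i+1}))^6(e_i)=e_i-2g_{i+1}+2g_{i+2}\neq e_i$ shows $(\rho_i\sigma_{i+1})^6\notin {\rm ker}(\Delta)$.

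The only real obstacle is bookkeeping: choosing the block basis ordering so that the printed matrices match, and being consistent with the right-action convention so that images of basis vectors appear as rows rather than as columns. Once that convention is fixed, the corollary is a transcription of Theorem~\ref{lingoodrepr2} into matrix form, and no further mathematical content is needed.
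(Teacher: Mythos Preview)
Your proposal is correct and follows exactly the approach of the paper: the corollary is stated there with no separate proof, merely as ``the reformulation of Theorem~\ref{lingoodrepr2} in terms of matrices,'' and your argument carries out precisely that identification. In fact your basis ordering $(e_i,f_i,g_i)$ is the one that actually matches the displayed matrices; the paper's own description of the ordering (``$e_1,g_1,f_1,e_2,f_2,g_2,\dots,e_n,f_n,zg_n$'') appears to contain typos that you have correctly diagnosed.
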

\begin{question}{\rm Find the kernel ${\rm ker}(\Delta)$.} 
\end{question}
From Theorem~\ref{lingoodrepr2} it follows that there are elements which belong to the kernel of the representation $\Theta:FVB_n\to{\rm GL}_{2n}(\mathbb{Z}[p^{pm1}])$ and donot belong to the kernel of the  representation $\Delta:FVB_n\to{\rm GL}_{3n}(\mathbb{Z})$. In general the relations between the kernels ${\rm ker}(\Theta)$ and ${\rm ker}(\Delta)$ are not clear. In order to construct the linear representation which has all advantages of both representations  $\Theta$, $\Delta$ one can construct the representation $(\Theta\oplus\Delta):FVB_n\to {\rm GL}_{5n}(\mathbb{Z}[p^{\pm1}])$ which maps the braid $\alpha\in FVB_n$ to the matrix $\Theta(\alpha)\oplus\Delta(\alpha)$ from ${\rm GL}_{5n}(\mathbb{Z}[p^{\pm1}])$.

\subsection{Representation of $GVB_n$}\label{gausrepr}
Let us describe when the representation $\theta:FVB_n\to{\rm Aut}(F_{2n})$  introduced in Section~\ref{newwwrepp} induces  the representation of the group  $GVB_n$ of Gauss virtual braids. In order to do it let us understand under which conditions the map $\theta$ preserves the relations $\sigma_i\rho_i=\rho_i\sigma_i$ for $i=1,2,\dots,n-1$. Denote by $\theta(\sigma_i)=s_i$, $\theta(\rho_i)=r_i$ and let us calculate the automorphisms $r_is_i$, $s_ir_i$. From the direct calculations it follows that these automorphisms have the following forms
\begin{align*}
r_is_i:\begin{cases}
x_i\mapsto x_iy_{i+1}^{-1},\\
x_{i+1}\mapsto x_{i+1}y_{i+1},\\
y_i\mapsto y_{i+1},\\
y_{i+1}\mapsto y_i,
\end{cases}&&s_ir_i:\begin{cases}
x_i\mapsto x_iy_{i},\\
x_{i+1}\mapsto x_{i+1}y_{i}^{-1},\\
y_i\mapsto y_{i+1},\\
y_{i+1}\mapsto y_i.
\end{cases}
\end{align*}
From these formulas it follows that the representation $\theta:FVB_n\to{\rm Aut}(F_{2n})$ induces the representation of the Gauss virtual braid group if and only if $y_i=y_{i+1}^{-1}$ for $i=1,2,\dots,n-1$. If we denote by $y_1=z$, then for $i=2,3,\dots,n-1$ the equality $y_i=y_{i+1}^{-1}$ is equivalent to the equality $y_i=z^{(-1)^{i+1}}$. In these denotations the representation $\theta:FVB_n\to{\rm Aut}(F_{2n})$ induces the representation  $\theta_G:GVB_n\to{\rm Aut}(F_{n+1})$, where $F_{n+1}=\langle x_1,x_2,\dots,x_n,z\rangle$, which acts on the generators $\sigma_1,\sigma_2,\dots,\sigma_{n-1},\rho_1,\rho_2,\dots,\rho_{n-1}$ by the following rules
\begin{align}\label{gaussmainrepr}
\theta_G(\sigma_i):
\begin{cases}
x_i \mapsto x_{i+1}z^{(-1)^i},\\
x_{i+1} \mapsto x_iz^{(-1)^{i+1}},\\
\end{cases}&&
\theta_G(\rho_i):
\begin{cases}
x_i \mapsto x_{i+1},\\
x_{i+1} \mapsto x_i,\\
z\mapsto z^{-1}.
\end{cases}
\end{align}
Unfortunately, the representation $\theta_G:GVB_n\to{\rm Aut}(F_{n+1})$ preserves the forbidden relations~(\ref{forbidden}). Indeed, denote by $\theta_G(\sigma_i)=s_i$, $\theta_G(\rho_i)=r_i$ and let us calculate the automorphisms $r_is_{i+1}s_i$, $s_{i+1}s_ir_{i+1}$. From direct calculations it follows that 
\begin{align*}
r_is_{i+1}s_i&:\begin{cases}x_i\xmapsto{r_i}x_{i+1}\xmapsto{s_{i+1}}x_{i+2}z^{(-1)^{i+1}}\xmapsto{s_i}x_{i+2}z^{(-1)^{i+1}},\\
x_{i+1}\xmapsto{r_i}x_{i}\xmapsto{s_{i+1}}x_i\xmapsto{s_{i}}x_{i+1}z^{(-1)^i},\\
x_{i+2}\xmapsto{r_i}x_{i+2}\xmapsto{s_{i+1}} x_{i+1}z^{(-1)^{i+2}}\xmapsto{s_i}x_iz^{(-1)^{i+1}+(-1)^{i+2}}=x_i,\\
z\xmapsto{r_i}z^{-1}\xmapsto{s_{i+1}}z^{-1}\xmapsto{s_{i}}z^{-1},
\end{cases}\\
s_{i+1}s_ir_{i+1}&:\begin{cases}x_i\xmapsto{s_{i+1}}x_i\xmapsto{s_i}x_{i+1}z^{(-1)^{i}}\xmapsto{r_{i+1}}x_{i+2}z^{(-1)^{i+1}},\\
x_{i+1}\xmapsto{s_{i+1}}x_{i+2}z^{(-1)^{i+1}}\xmapsto{s_i}x_{i+2}z^{(-1)^{i+1}}\xmapsto{r_{i+1}}x_{i+1}z^{(-1)^{i+2}}=x_{i+1}z^{(-1)^{i}},\\
x_{i+2}\xmapsto{s_{i+1}}x_{i+1}z^{(-1)^{i+2}}\xmapsto{s_{i}}x_{i}z^{(-1)^{i+1}+(-1)^{i+2}}=x_i\xmapsto{r_{i+1}}x_i,\\
z\xmapsto{s_{i+1}}z\xmapsto{s_{i}}z\xmapsto{r_{i+1}}z^{-1}.
\end{cases}
\end{align*}
These equalities mean that the representation $\theta_G:GVB_n\to{\rm Aut}(F_{n+1})$ preserves the forbidden relations (\ref{forbidden}). Let us modify the representation $\theta_G:GVB_n\to{\rm Aut}(F_{n+1})$ to the representation which does not preserve the forbidden relations and still is very similar to the representation $\theta:FVB_n\to{\rm Aut}(F_{2n})$.

Let us consider the free group $F_{2n+1}$ with the free generators $x_1,x_2,\dots,x_n,y_1,y_2,\dots,y_n, z$. The following theorem is a kind of analogues to Theorem~\ref{goodrepr} for Gauss virtual braid groups.
\begin{theorem}\label{gaussmainrepr2th} Let the map $\theta_G^*:\{\sigma_1,\sigma_2,\dots,\sigma_{n-1},\rho_1,\rho_2,\dots,\rho_{n-1}\}\to{\rm Aut}(F_{2n+1})$ is given by the following formulas
\begin{align}\label{gaussmainrepr2}
\theta_G^*(\sigma_i):
\begin{cases}
x_i \mapsto x_{i+1}z^{(-1)^i},\\
x_{i+1} \mapsto x_iz^{(-1)^{i+1}},\\
\end{cases}&&
\theta_G^*(\rho_i):
\begin{cases}
x_i \mapsto x_{i+1},\\
x_{i+1} \mapsto x_i,\\
y_i\mapsto y_{i+1},\\
y_{i+1}\mapsto y_i,\\
z\mapsto z^{-1}.
\end{cases}
\end{align}
Then $\theta_G^*$ induces the representation $\theta_G^*:GVB_n\to{\rm Aut}(F_{2n+1})$. Moreover, this representation does not preserve the forbidden relations (\ref{forbidden}).
\end{theorem}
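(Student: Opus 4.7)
The plan is to exploit the fact that $F_{2n+1}$ decomposes as a free product whose two factors are both invariant under $\theta_G^*$. Let $A=\langle x_1,\dots,x_n,z\rangle\cong F_{n+1}$ and $B=\langle y_1,\dots,y_n\rangle\cong F_n$, so that $F_{2n+1}=A*B$. Inspection of formulas (\ref{gaussmainrepr2}) shows that each $\theta_G^*(\sigma_i)$ and each $\theta_G^*(\rho_i)$ sends $A$ into $A$ and $B$ into $B$. Consequently $\theta_G^*$ takes values inside the subgroup of ${\rm Aut}(F_{2n+1})$ consisting of automorphisms preserving this free product decomposition, and by the universal property of free products this subgroup is isomorphic to ${\rm Aut}(A)\times{\rm Aut}(B)$ via restriction (extending automorphisms factor-by-factor is inverse to restricting, and both maps are group homomorphisms). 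Therefore, to show that $\theta_G^*$ induces a representation of $GVB_n$, it suffices to show that the two restrictions $\theta_G^*|_A$ and $\theta_G^*|_B$ each induce representations of $GVB_n$.

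The restriction to $A$ sends $\sigma_i$ and $\rho_i$ to exactly the automorphisms displayed in (\ref{gaussmainrepr}), so it coincides with the map $\theta_G$ that was already shown in the paragraph preceding the theorem to induce a representation $\theta_G:GVB_n\to{\rm Aut}(F_{n+1})$. The restriction to $B$ sends every $\sigma_i$ to the identity automorphism of $B$ and every $\rho_i$ to the transposition interchanging $y_i$ and $y_{i+1}$; this is the composition of the homomorphism $\iota_2:GVB_n\to S_n$ with the standard permutation action of $S_n$ on $F_n$, and is therefore trivially a homomorphism. Combining these two pieces via the free product decomposition yields the desired representation $\theta_G^*:GVB_n\to{\rm Aut}(F_{2n+1})$.

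For the second claim I evaluate both sides of the forbidden relation (\ref{forbidden}) on the generator $y_i$. Since every $\theta_G^*(\sigma_j)$ fixes all of $y_1,\dots,y_n$, the action of $\theta_G^*(\rho_i\sigma_{i+1}\sigma_i)$ on $y_i$ reduces to that of $\theta_G^*(\rho_i)$, yielding $y_{i+1}$, while the action of $\theta_G^*(\sigma_{i+1}\sigma_i\rho_{i+1})$ on $y_i$ reduces to that of $\theta_G^*(\rho_{i+1})$, yielding $y_i$ because $\rho_{i+1}$ only interchanges $y_{i+1}$ and $y_{i+2}$. These two elements of $F_{2n+1}$ are distinct, so $\theta_G^*$ does not preserve the forbidden relations. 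The only genuine checkpoint in this approach is the invariance of $A$ and $B$ under each $\theta_G^*(\sigma_i)$ and $\theta_G^*(\rho_i)$, which is immediate from the defining formulas; beyond that, the argument is a bookkeeping reduction to the already-established representation $\theta_G$ together with the trivial permutation action of $S_n$ on $F_n$.
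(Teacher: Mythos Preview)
Your argument is correct and takes a genuinely different route from the paper. The paper's proof consists of the single sentence ``The proof completely repeats the proof of Theorem~\ref{goodrepr},'' i.e.\ one is expected to directly verify that the images $\theta_G^*(\sigma_i),\theta_G^*(\rho_i)$ satisfy every defining relation (\ref{def2})--(\ref{def8}), (\ref{def1}) of $FVB_n$ together with the Gauss relation $\sigma_i\rho_i=\rho_i\sigma_i$, and then check the failure of the forbidden relation by hand, exactly as in Theorem~\ref{goodrepr}.

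Your approach avoids this repetition by exploiting the free product decomposition $F_{2n+1}=A*B$ with $A=\langle x_1,\dots,x_n,z\rangle$ and $B=\langle y_1,\dots,y_n\rangle$. Since each $\theta_G^*(\sigma_i)$ and $\theta_G^*(\rho_i)$ is an involution preserving both factors, the representation question reduces to the two restrictions, which you identify respectively with $\theta_G$ (already established just before the theorem) and with $\iota_2$ followed by the permutation action of $S_n$ on $F_n$. This is more conceptual and shorter: no relation needs to be rechecked, everything is inherited from work already done in the paper. The paper's brute-force approach, by contrast, has the advantage of being self-contained and not requiring the reader to track down why $\theta_G$ was a representation of $GVB_n$. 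Your verification of the failure of the forbidden relation on $y_i$ is essentially the same computation the paper uses at the end of the proof of Theorem~\ref{goodrepr}.
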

\begin{proof}
The proof completely repeats the proof of Theorem~\ref{goodrepr}.
\end{proof}
The problem of defining the kernel ${\rm ker}(\theta_G^*)$ of the representation $\theta_G^*:GVB_n\to{\rm Aut}(F_{2n+1})$ given in Theorem~\ref{gaussmainrepr2th} is easier then the problem of defining the kernel ${\rm ker}(\theta)$ of the representation $\theta:FVB_n\to{\rm Aut}(F_{2n})$ given in Theorem~\ref{goodrepr}, and it is solved in the following theorem.
\begin{theorem}\label{Gauusker}${\rm Im}(\theta_G^*)=S_n\times S_n$, ${\rm ker}(\theta_G^*)=GVP_n\cap GH_n$.
\end{theorem}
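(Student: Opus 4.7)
The plan is to prove both assertions simultaneously by identifying $\theta_G^*$ with the natural quotient $GVB_n\twoheadrightarrow GVB_n/(GVP_n\cap GH_n)\cong S_n\times S_n$. Once one establishes ${\rm ker}(\theta_G^*)=GVP_n\cap GH_n$, both the kernel statement and the identification ${\rm Im}(\theta_G^*)\cong S_n\times S_n$ follow at once.

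The inclusion ${\rm ker}(\theta_G^*)\subseteq GVP_n\cap GH_n$ is obtained by repeating verbatim the argument of Proposition~\ref{kerisinintern}. The set $Y=\{y_1,\ldots,y_n\}\subset F_{2n+1}$ is stable under every $\theta_G^*(\sigma_i)$ (which fixes all $y_j$) and under every $\theta_G^*(\rho_i)$ (which transposes $y_i$ with $y_{i+1}$), so restriction to $Y$ yields a homomorphism $GVB_n\to{\rm Sym}(Y)=S_n$ which by inspection on generators coincides with $\iota_2$; hence ${\rm ker}(\theta_G^*)\subseteq{\rm ker}(\iota_2)=GH_n$. Similarly, the normal closure $N\triangleleft F_{2n+1}$ of $\{y_1,\ldots,y_n,z\}$ is $\theta_G^*(GVB_n)$-stable, and the induced action on $F_{2n+1}/N\cong F_n=\langle x_1,\ldots,x_n\rangle$ sends both $\sigma_i$ and $\rho_i$ to the transposition $(x_i,x_{i+1})$, hence coincides with $\iota_1$; so ${\rm ker}(\theta_G^*)\subseteq{\rm ker}(\iota_1)=GVP_n$.

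To identify the target, I use the combined map $\Phi=(\iota_1,\iota_2)\colon GVB_n\to S_n\times S_n$, whose kernel is $GVP_n\cap GH_n$ by definition. It is surjective: $\Phi(\sigma_i)=((i,i+1),1)$ and $\Phi(\sigma_i\rho_i)=(1,(i,i+1))$, and together these elements generate $S_n\times S_n$. Hence there is a natural isomorphism $GVB_n/(GVP_n\cap GH_n)\cong S_n\times S_n$, and if the reverse containment $GVP_n\cap GH_n\subseteq{\rm ker}(\theta_G^*)$ holds, then $\theta_G^*$ factors through this isomorphism as an injection of $S_n\times S_n$ into ${\rm Aut}(F_{2n+1})$, yielding both conclusions of the theorem.

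The main step remaining is the reverse inclusion $GVP_n\cap GH_n\subseteq{\rm ker}(\theta_G^*)$. I would approach it using the explicit set of normal generators of $GVP_n\cap GH_n$ in $GVB_n$ produced in Section~\ref{intersectionss}: because ${\rm ker}(\theta_G^*)$ is normal in $GVB_n$, it suffices to verify for each such normal generator that $\theta_G^*$ sends it to the identity automorphism of $F_{2n+1}$. The hardest part here is the bookkeeping of $z$-exponents: each $\theta_G^*(\rho_i)$ inverts $z$ while each $\theta_G^*(\sigma_i)$ multiplies $x_i$ or $x_{i+1}$ by $z^{\pm1}$, and one must confirm that for every chosen normal generator these contributions cancel out, leaving every generator of $F_{2n+1}$ fixed.
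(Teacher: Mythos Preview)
Your framework is sound: the containment ${\rm ker}(\theta_G^*)\subseteq GVP_n\cap GH_n$ is obtained exactly as in the paper, and your use of $\Phi=(\iota_1,\iota_2)$ to identify the image with $S_n\times S_n$ once the kernel is known is a clean alternative to the paper's direct computation with $\theta_G^*(\sigma_i)$ and $\theta_G^*(\eta_i)$. But the reverse inclusion $GVP_n\cap GH_n\subseteq{\rm ker}(\theta_G^*)$ is only sketched: you propose to check the normal generators from Section~\ref{intersectionss} one by one, and you explicitly flag the $z$-exponent bookkeeping as the hardest part without carrying it out. That verification is the entire content of the hard direction, so as written the argument is incomplete. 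There is also a logical-order issue, since Section~\ref{intersectionss} comes after this theorem in the paper (no circularity arises, as that section does not invoke the present result, but the dependency would need to be reorganised).

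The paper avoids both the forward reference and the case-by-case bookkeeping with a single structural observation. It computes $\theta_G^*(\lambda_{i,j})$ directly and factors it as $\alpha_{i,j}\beta_{i,j}$, a product of commuting automorphisms where $\beta_{i,j}$ is the transposition $(y_i,y_j)$ and $\alpha_{i,j}$ acts only on the $x_k$'s and $z$. The $\beta_{i,j}$ generate a copy of $S_n$, and the key point is that the assignment $\beta_{i,j}\mapsto\alpha_{i,j}$ extends to a homomorphism $\varphi$ out of that $S_n$ (one checks the relations of Proposition~\ref{permtrans} for the $\alpha_{i,j}$). Now for $A\in GVP_n\cap GH_n$ written as a word $w$ in the $\lambda_{i,j}$, commutativity gives $\theta_G^*(A)=w(\alpha_{i,j})\cdot w(\beta_{i,j})$; membership in $GH_n$ forces $w(\beta_{i,j})=\mathrm{id}$, and then $w(\alpha_{i,j})=\varphi\bigl(w(\beta_{i,j})\bigr)=\mathrm{id}$ automatically. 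This packages all of the $z$-exponent cancellations you were worried about into the single verification that $\varphi$ is well-defined, and it makes no appeal to an explicit list of normal generators of the intersection.
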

\begin{proof} First, let us prove that ${\rm Im}(\theta_G^*)=S_n\times S_n$. For $i=1,2,\dots,n-1$ denote by $\eta_i=\lambda_{i,i+1}=\sigma_i\rho_i$. It is clear that the group $GVB_n$ is generated by the elements $\sigma_1,\sigma_2,\dots,\sigma_{n-1},\eta_1,\eta_2,\dots,\eta_{n-1}$. From this fact it follows that ${\rm Im}(\theta_G^*)$ is generated by the elements $\theta_G^*(\sigma_1),\theta_G^*(\sigma_2),\dots,\theta_G^*(\sigma_{n-1}),\theta_G^*(\eta_1),\theta_G^*(\eta_2),\dots,\theta_G^*(\eta_{n-1})$. From the direct calculations it is easy to see that for $i=1,2,\dots,n-1$ the following formulas hold
	\begin{align}\label{easywin}
	\theta_G^*(\sigma_i):
	\begin{cases}
	x_i \mapsto x_{i+1}z^{(-1)^i},\\
	x_{i+1} \mapsto x_iz^{(-1)^{i+1}},\\
	\end{cases}&&
	\theta_G^*(\eta_i):
	\begin{cases}
	x_i \mapsto x_{i}z^{(-1)^{i+1}},\\
	x_{i+1} \mapsto x_{i+1}z^{(-1)^{i}},\\
	y_i\mapsto y_{i+1},\\
	y_{i+1}\mapsto y_i,\\
	z\mapsto z^{-1}.
	\end{cases}
	\end{align}
From formulas (\ref{easywin}) it is clear that the elements $\theta_G^*(\sigma_1),\theta_G^*(\sigma_2),\dots,\theta_G^*(\sigma_{n-1})$ generate the full permutation group $S_n$ in ${\rm Im}(\theta_G^*)$. Also from formulas (\ref{easywin}) it is clear that the elements $\theta_G^*(\eta_1),\theta_G^*(\eta_2),\dots,\theta_G^*(\eta_{n-1})$ generate the full permutation group $S_n$ in ${\rm Im}(\theta_G^*)$. Since the elements $\theta_G^*(\sigma_i)~(i=1,2,\dots,n-1)$ act non-identically on $x_1,x_2,\dots,x_n$, and identically on $y_1,y_2,\dots,y_n$, and the elements $\theta_G^*(\rho_i)~(i=1,2,\dots,n-1)$ act identically (modulo the normal closure of the element $z$) on $x_1,x_2,\dots,x_n$, and non-identically on $y_1,y_2,\dots,y_n$, the group $S_n$ generated by the elements $\theta_G^*(\sigma_1),\theta_G^*(\sigma_2),\dots,\theta_G^*(\sigma_{n-1})$, and the group $S_n$ generated by the elements $\theta_G^*(\eta_1),\theta_G^*(\eta_2),\dots,\theta_G^*(\eta_{n-1})$ have the trivial intersection. Finally using formulas (\ref{easywin}) and direct calculations it is easy to see that $\theta_G^*(\sigma_i)\theta_G^*(\rho_j)=\theta_G^*(\rho_j)\theta_G^*(\sigma_i)$ for arbitrary $i,j=1,2,\dots,n-1$. Hence, ${\rm Im}(\theta_G^*)$ is generated by two groups $S_n$ which commute with each other and which intersect trivially, therefore, ${\rm Im}(\theta_G^*)=S_n\times S_n$.

Let us prove that ${\rm ker}(\theta_G^*)=GVP_n\cap GH_n$. The inclusion ${\rm ker}(\theta_G^*)\leq GVP_n\cap GH_n$ can be proved in the same way to Proposition~\ref{kerisinintern}. Let us prove that the inclusion $GVP_n\cap GH_n\leq {\rm ker}(\theta_G^*)$ holds. 

Let $A$ be an element from the intersection $GVP_n\cap GH_n$. 
From the definition of the generators $x_{i,j}~(1\leq i<j\leq n-1)$ of $GH_n$ and the direct calculations it follows that the automorphisms $\theta_G^*(x_{i,j})~(1\leq i<j\leq n-1)$ fix the elements $y_1,y_2,\dots,y_n,z$. From this fact it follows that every element from $\theta_G^*(GH_n)$ fixes the elements $y_1,y_2,\dots,y_n,z$, in particular, since $A\in GH_n$, the automorphism $\theta_G^*(A)$ fixes the elements $y_1,y_2,\dots,y_n,z$.

From the direct calculations it follows that the image of the generator $\lambda_{i,j}~(1\leq i<j\leq n)$ of $GVP_n$ under the map $\theta_G^*$ has the following form
$$\theta^*_G(\lambda_{i,j}):
\begin{cases}
x_i \mapsto x_{j}z^{(-1)^{j+1}},\\
x_{j} \mapsto x_{i}z^{(-1)^{j}},\\
y_i\mapsto y_{j},\\
y_{j}\mapsto y_i,\\
z\mapsto z^{-1}.
\end{cases}$$
This automorphism can be expressed in the form $\theta^*_G(\lambda_{i,j})=\alpha_{i,j}\beta_{i,j}$, where the automorphisms $\alpha_{i,j},\beta_{i,j}$ have the following forms
\begin{align*}
\alpha_{i,j}:
\begin{cases}
x_i \mapsto x_{j}z^{(-1)^{j+1}},\\
x_{j} \mapsto x_{i}z^{(-1)^{j}},\\
z\mapsto z^{-1}.
\end{cases}&&\beta_{i,j}:
\begin{cases}
y_i\mapsto y_{j},\\
y_{j}\mapsto y_i.
\end{cases}
\end{align*}
Since these automorphisms act non-trivially on non-intersecting sets, it is obvious that for arbitrary $1\leq i<j\leq n$, $1\leq r<s\leq n$ the automorphisms $\alpha_{i,j}$, $\beta_{r,s}$ commute. It is clear that the automorphisms $\beta_{i,j}~(1\leq i<j\leq n)$ generate the full permutation group $S_n$, where the automorphism $\beta_{i,j}$ is the transposition $(i,j)$. Checking the defining relations of the permutation group $S_n$ given in Proposition~\ref{permtrans} it is easy to see that the map $\varphi$, which maps the automorphism $\beta_{i,j}$ to the automorphism $\alpha_{i,j}$ for $1\leq i<j\leq n$ gives a homomorphism from the group $S_n$ to the subgroup of ${\rm Aut}(F_{2n+1})$ generated by the automorphisms $\alpha_{i,j}$ for $1\leq i<j\leq n$. Since $A$ belongs to $GVP_n$, the elements $A$ can be written as a word over the elements $\lambda_{i,j}$
$$A=w(\lambda_{i,j},~1\leq i<j\leq n).$$
Since for arbitrary $1\leq i<j\leq n$, $1\leq r<s\leq n$ the automorphisms $\alpha_{i,j}$, $\beta_{r,s}$ commute, the element $\theta_G^*(A)$ can be written in the form
$$\theta_G^*(A)=w(\theta_G^*(\lambda_{i,j}),~1\leq i<j\leq n)=w(\alpha_{i,j},~1\leq i<j\leq n)w(\beta_{i,j},~1\leq i<j\leq n).$$
Since $A$ belongs to $GH_n$, the automorphism $\theta_G^*(A)$ fixes the elements $y_1,y_2,\dots,y_n,z$. Therefore $w(\beta_{i,j},~1\leq i<j\leq n)$ is the identity automorphism (since only it acts non-trivially only on $y_1,y_2,\dots,y_n$). On the other hand we have 
$$w(\alpha_{i,j},~1\leq i<j\leq n)=w(\varphi(\beta_{i,j}),~1\leq i<j\leq n)=\varphi(w(\beta_{i,j},~1\leq i<j\leq n)),$$
therefore $w(\alpha_{i,j},~1\leq i<j\leq n)$ is also the identity automorphism. Therefore the automorphism
$$\theta_G^*(A)=w(\alpha_{i,j},~1\leq i<j\leq n)w(\beta_{i,j},~1\leq i<j\leq n)$$
is identity, i.~e. $A$ belong to the kernel of $\theta_G^*$.
\end{proof}

\subsection{Other representations of $FVB_n$}\label{otherguys}
In this section we introduce several representations of the flat virtual braid group $FVB_n$ which we found together with the representation $\theta:FVB_n\to{\rm Aut}(F_{2n})$ introduced in Theorem~\ref{goodrepr}. Let $F_{2n}$ be the free group with the free generators $x_1,x_2,\dots,x_n,y_1,y_2,\dots,y_n$. For $m\in \mathbb{Z}$ denote by $\theta_{1,m}$ the following map from the set of generators of $FVB_n$ to ${\rm Aut}(F_{2n})$
\begin{align}\label{theta1}
\theta_{1,m}(\sigma_i):
\begin{cases}
x_i \mapsto x_{i+1}y_{i+1}^m,\\
x_{i+1} \mapsto x_iy_{i+1}^{-m},\\
\end{cases}&&
\theta_{1,m}(\rho_i):
\begin{cases}
x_i \mapsto x_{i+1},\\
x_{i+1} \mapsto x_i,\\
y_i \mapsto y_{i+1},\\
y_{i+1} \mapsto y_i.\\
\end{cases}
\end{align}
Let $F_{2n+1}$ be the free group with the free generators $x_1,x_2,\dots,x_n,y_1,y_2,\dots,y_n,z$. Denote by $\theta_2$ the following map from the set of generators of $FVB_n$ to ${\rm Aut}(F_{2n+1})$
\begin{align}\label{theta2}
\theta_2(\sigma_i):
\begin{cases}
x_i \mapsto x_{i+1}y_{i+1}^z,\\
x_{i+1} \mapsto x_i(y_{i+1}^{-1})^z,\\
\end{cases}&&
\theta_2(\rho_i):
\begin{cases}
x_i \mapsto x_{i+1},\\
x_{i+1} \mapsto x_i,\\
y_i \mapsto y_{i+1},\\
y_{i+1} \mapsto y_i.\\
\end{cases}
\end{align}
Let $F_n*(F_n\times \mathbb{Z})=\langle x_1,x_2,\dots,x_n,y_1,y_2,\dots,y_n,z~|~[y_i,z]=1, i=1,2,\dots,n\rangle$ be the free product of the free group $F_n$ generated by the elements $x_1,x_2,\dots,x_n$ and the group $F_n\times \mathbb{Z}$ generated by the elements $y_1,y_2,\dots,y_n,z$. Denote by $\theta_3, \theta_4$ the following maps from the set of generators of $FVB_n$ to ${\rm Aut}(F_n*(F_n\times \mathbb{Z}))$.
 \begin{align}
\label{theta3}&\theta_3(\sigma_i):
\begin{cases}
x_i \mapsto x_{i+1}y_{i+1},\\
x_{i+1} \mapsto x_iy_{i+1}^{-1},\\
\end{cases}&&
\theta_3(\rho_i):
\begin{cases}
x_i \mapsto x_{i+1}z,\\
x_{i+1} \mapsto x_iz^{-1},\\
y_i \mapsto y_{i+1},\\
y_{i+1} \mapsto y_i,\\
\end{cases}\\
\label{theta4}&\theta_4(\sigma_i):
\begin{cases}
x_i \mapsto x_{i+1}y_{i+1},\\
x_{i+1} \mapsto x_iy_{i+1}^{-1},\\
\end{cases}&&
\theta_4(\rho_i):
\begin{cases}
x_i \mapsto x_{i+1}^z,\\
x_{i+1} \mapsto x_i^{z^{-1}},\\
y_i \mapsto y_{i+1},\\
y_{i+1} \mapsto y_i.\\
\end{cases}
\end{align}
Let $G$, $H_1$, $H_2$ be groups, and $\varphi_1:G\to H_1$, $\varphi_2:G\to H_2$ be homomorphisms. The homomorphisms $\varphi_1,\varphi_2$ are said to be equivalent if ${\rm ker}(\varphi_1)={\rm ker}(\varphi_2)$. The main results of this section is the following theorem.
\begin{theorem}\label{cirtsim}The following statements hold.
	\begin{enumerate}\item The maps $\theta_{1,m}(m\in\mathbb{Z}),\theta_2,\theta_3,\theta_4$ given by formulas (\ref{theta1})-(\ref{theta4}) induce representations of the flat virtual braid group $FVB_n$. 
\item The representations $\theta_{1,m}(m\neq0),\theta_2,\theta_3,\theta_4$ are equivalent to the representation $\theta$ introduced in Theorem~\ref{goodrepr}.
\end{enumerate}
\end{theorem}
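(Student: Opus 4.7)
The plan is two-fold, following the statement.

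For Part 1 I would verify for each of $\theta_{1,m}$, $\theta_2$, $\theta_3$, $\theta_4$ that the defining relations (\ref{def2})--(\ref{def8}), (\ref{def1}) of $FVB_n$ are preserved, in direct analogy with the proof of Theorem~\ref{goodrepr}. The commuting relations (\ref{def2}), (\ref{def5}), (\ref{def7}) are immediate from disjoint supports of the generators. The braid and mixed relations (\ref{def3}), (\ref{def6}), (\ref{def8}) and the involutions (\ref{def4}), (\ref{def1}) reduce to finite tables essentially identical to those already displayed for $\theta$; for $\theta_3$ and $\theta_4$ one uses $[y_i,z]=1$ to commute powers of $z$ past $y$'s during the computation.

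For Part 2 the strategy is, for each target $\theta'$, to produce a homomorphism between the relevant free groups that intertwines $\theta$ with $\theta'$ on generators and to extract equality of kernels. For $\theta_{1,m}$ with $m\neq 0$, define $\phi:F_{2n}\to F_{2n}$ by $\phi(x_j)=x_j$, $\phi(y_j)=y_j^m$. Then $\phi$ is injective (its image is free of rank $2n$ by Nielsen--Schreier), and a direct check on generators establishes $\phi\circ\theta(g)=\theta_{1,m}(g)\circ\phi$ for every $g\in FVB_n$. Injectivity of $\phi$ gives $\ker\theta_{1,m}\subseteq\ker\theta$; conversely, if $\theta(g)=\mathrm{id}$ then $\theta_{1,m}(g)$ fixes each $x_j$ and each $y_j^m$, and uniqueness of nontrivial roots in free groups upgrades this to $\theta_{1,m}(g)(y_j)=y_j$, so $\theta_{1,m}(g)=\mathrm{id}$. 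Exactly the same argument, with $\phi_2:F_{2n}\to F_{2n+1}$, $\phi_2(x_j)=x_j$, $\phi_2(y_j)=y_j^z=z^{-1}y_jz$, settles $\theta_2$, using that every $\theta_2(g)$ fixes $z$.

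For $\theta_3$ and $\theta_4$ the natural intertwining goes in the other direction: the projection $\pi:F_n*(F_n\times\mathbb{Z})\to F_{2n}$ with $\pi(z)=1$ satisfies $\pi\circ\theta_3(g)=\theta(g)\circ\pi$ and $\pi\circ\theta_4(g)=\theta(g)\circ\pi$ on generators, so surjectivity of $\pi$ at once yields $\ker\theta_3\subseteq\ker\theta$ and $\ker\theta_4\subseteq\ker\theta$. The reverse containments are the main obstacle, since no section of $\pi$ intertwines the representations: a twist $\phi(x_j)=x_jz^{a(j)}$ is forced by $\rho_i$ to satisfy $a(i+1)=a(i)+1$ but by $\sigma_i$ to satisfy $a(i+1)=a(i)$, a contradiction. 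My plan here is, for $g\in\ker\theta$, to use Proposition~\ref{kerisinintern} to reduce to proving $\theta_3(g)(x_j)=x_j$ for each $j$ (the $y$- and $z$-parts are handled for free, since the $y$-action of $\theta_3$ factors through $\iota_2$ and $g\in FH_n$, and $z$ is fixed by every generator), and then to analyze the crossed homomorphism $g\mapsto q_g\in\mathbb{Z}^n$ that records the $\bar z$-exponents of $\theta_3(g)(\bar x_j)$ in $G^{ab}$. This $q$ vanishes on the normal generators $(\rho_i\sigma_{i+1})^6$ by an explicit calculation extending Proposition~\ref{nontrivker}, and is transported along conjugates by the cocycle identity, so $q\equiv 0$ on a large subgroup of $\ker\theta$. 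The hardest step, and the one that will require the real work, is promoting this abelianized vanishing $q_g=0$ to the non-abelian equality $\theta_3(g)(x_j)=x_j$ in $G$; I expect this to require a careful analysis of the normal closure of $z$ inside the free product $G$ together with the semidirect-product decomposition $FVB_n=FVP_n\rtimes S_n$ from Section~\ref{rabpur}. The argument for $\theta_4$ parallels this one with conjugation-twists in place of multiplicative twists throughout.
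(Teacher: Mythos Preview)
Your treatment of Part~1 and of $\theta_{1,m},\theta_2$ in Part~2 is correct and essentially matches the paper's. The gap is in the inclusion $\ker\theta\subseteq\ker\theta_3$ (and analogously for $\theta_4$).

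You have misjudged where the difficulty lies, and your proposed route cannot close. A short induction on word length shows $\theta_3(\beta)(x_j)\in x_k\cdot\langle y_1,\dots,y_n,z\rangle$ for every $\beta\in FVB_n$; since $[y_i,z]=1$ in that subgroup, this is $x_k\,u\,z^{e}$ with $u\in\langle y_1,\dots,y_n\rangle$, and setting $z=1$ recovers $\theta(\beta)(x_j)=x_k u$. For $\beta\in\ker\theta$ this already gives $\theta_3(\beta)(x_j)=x_j z^{e}$, so the non-abelian equality you flag as the hardest step is \emph{equivalent} to $q_\beta(j)=0$, not a strengthening of it. What actually needs an argument is $q_\beta=0$ for every $\beta\in\ker\theta$, and here your plan only reaches the normal closure $N$ of the elements $(\rho_i\sigma_{i+1})^6$: Corollary~\ref{corker} gives $N\leq\ker\theta$, but equality is precisely the open Problem~1 in the paper, so the argument cannot be completed along these lines. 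The paper bypasses generators of $\ker\theta$ altogether and shows the $z$-exponent vanishes directly: following the word $x_k u z^e$ through each generator, an application of $\sigma_i$ shifts $k$ by $\pm1$ and multiplies $u$ by a single $y$-letter of the same sign, while an application of $\rho_i$ shifts $k$ and $e$ by the same $\pm1$; hence $e$ equals the net change in $k$ minus the total $y$-degree of $u$, and both of these are read off from $\theta(\beta)(x_j)$ and vanish when $\theta(\beta)=\mathrm{id}$.
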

\begin{proof}In order to check that the maps $\theta_{1,m}(m\in\mathbb{Z}),\theta_2,\theta_3,\theta_4$ induce representations it is enough to verify that these maps preserve all defining relations  (\ref{def2})-(\ref{def8}), (\ref{def1}) of the flat virtual braid group $FVB_n$. This can be done by
	direct calculations analougously to the proof of Theorem~\ref{goodrepr}, so, we will not do it here.
	
Let us prove that the representations $\theta_{1,m}(m\neq0),\theta_2,\theta_3,\theta_4$ are equivalent to the representation $\theta$ introduced in Theorem~\ref{goodrepr}, i.~e. that the equalities
$${\rm ker}(\theta)={\rm ker}(\theta_{1,m})={\rm ker}(\theta_2)={\rm ker}(\theta_3)={\rm ker}(\theta_4)$$
hold. 

First, let us prove that for $m\neq 0$ the equality ${\rm ker}(\theta)={\rm ker}(\theta_{1,m})$ holds. In order to do it, in the free group $F_{2n}$ generated by the elements $x_1,x_2,\dots,x_n,y_1,y_2,\dots,y_n$ denote by $G$ the subgroup generated by the elements  $x_1,x_2,\dots,x_n,y_1^{m},y_2^m,\dots,y_n^m$. From the definition of the representation $\theta_{1,m}:FVB_n\to {\rm Aut}(F_{2n})$ given by formulas (\ref{theta1}) it is clear that $G$ is invariant under the action of $\theta_{1,m}(FVB_n)$. Denote by $\varphi:F_{2n}\to G$ the map which maps $x_i$ to $x_i$, and maps $y_i$ to $y_i^m$ for $i=1,2,\dots,n$. Since $m\neq 0$, the group $G$ is the free group with the free generators $x_1,x_2,\dots,x_n,y_1^{m},y_2^m,\dots,y_n^m$, and the map $\varphi$ induces the isomorphism from $F_{2n}$ to $G$. From the definitions of the representations $\theta_{1,m}, \theta:FVB_n\to{\rm Aut}(F_{2n})$, it is clear that for every braid $\alpha\in FVB_n$ the equality $\varphi\theta_{1,m}(\alpha)\varphi^{-1}=\theta(\alpha)$ holds. From this equality clearly follows that ${\rm ker}(\theta)={\rm ker}(\theta_{1,m})$.

Now let us prove the equality ${\rm ker}(\theta)={\rm ker}(\theta_2)$. The representation $\theta:FVB_n\to{\rm Aut}(F_{2n})$ given by formulas~(\ref{mainrepr}) can be extended to the representation $\theta_*:FVB_n\to{\rm Aut}(F_{2n+1})$ using the same formulas~(\ref{mainrepr}) fixing the generator $z$. It is clear that ${\rm ker}(\theta)={\rm ker}(\theta_*)$, so, let us prove that ${\rm ker}(\theta_*)={\rm ker}(\theta_2)$. In order to do it, denote by $\varphi$ the automorphism of $F_{2n+1}$ which maps $y_i$ to $y_i^z$ for $i=1,2,\dots,n$, and fixes the generators $x_1,x_2,\dots,x_n,z$. From the definitions of the representations $\theta_*, \theta_2:FVB_n\to{\rm Aut}(F_{2n+1})$, it is clear that for every braid $\alpha\in FVB_n$ the equality $\varphi^{-1}\theta_*(\alpha)\varphi=\theta_2(\alpha)$ holds. From this equality clearly follows that ${\rm ker}(\theta_*)={\rm ker}(\theta_2)$. Therefore ${\rm ker}(\theta)={\rm ker}(\theta_*)={\rm ker}(\theta_2)$.

The equalities ${\rm ker}(\theta)={\rm ker}(\theta_3)$, ${\rm ker}(\theta)={\rm ker}(\theta_4)$ have very similar proofs, so, here we will prove only the equality ${\rm ker}(\theta)={\rm ker}(\theta_3)$. The inclusion ${\rm ker}(\theta_3)\leq {\rm ker}(\theta)$ is clear, since if $\alpha\in {\rm ker}(\theta_3)$, then $\theta_3(\alpha)$ is identity for  every fixed $z$, in particular, for $z=1$. It means that $\alpha\in {\rm ker}(\theta)$. 

Let us prove the inclusion ${\rm ker}(\theta)\leq {\rm ker}(\theta_3)$. Let $\beta$ be the braid from ${\rm ker}(\theta)$. First, note that from the definitions of the representations $\theta,\theta_3$ it follows that for every braid $\alpha\in FVB_n$ the equalities $\theta(\alpha)(y_i)=\theta_3(\alpha)(y_i)$ holds for $i=1,2,\dots,n$. From this fact and the fact that $\beta$ belongs to ${\rm ker}(\theta)$ follows that $\theta_3(\beta)(y_i)=y_i$ for $i=1,2,\dots,n$.  The element $\theta_3(\rho_i)$ has the following form
\begin{align*}
\theta_3(\rho_i):
\begin{cases}
x_i \mapsto x_{i+1}z,\\
y_i \mapsto y_{i+1},\\
x_{i+1} \mapsto x_iz^{-1},\\
y_{i+1} \mapsto y_i.\\
\end{cases}
\end{align*}
From this formula it is clear that the indices of $x_j$ and $y_j$ (for $j=1,2,\dots,n$) increase and decrease simultaneously. From this fact and the fact that $\theta_3(\beta)(y_i)=y_i$ for $i=1,2,\dots,n$ follows that the total exponent of $z$ in the word $\theta_3(x_i)~(i=1,2,\dots,n)$ is equal to zero. Since all the elements $y_1,y_2,\dots,y_n$ commute with $z$, and the total exponent of $z$ in the word $\theta_3(x_i)~(i=1,2,\dots,n)$ is equal to zero, we have the equalities $\theta_3(x_i)=\theta(x_i)=x_i$ for $i=1,2,\dots,n$. Therefore $\beta$ belongs to the kernel ${\rm ker}(\theta)$.
\end{proof}

Recently in the paper \cite{Barsssin} the notion of the virtually symmetric representation of braid-like groups were defined. Theorem~\ref{cirtsim} and the definition of the representation $\theta$ given in Section~\ref{hitheta} say that all representations given by formulas (\ref{theta1})-(\ref{theta4}) are virtually symmetric.
\section{Normal generators of the groups $VP_n\cap H_n$, $FVP_n\cap FH_n$, $GVP_n\cap GH_n$}\label{intersectionss}
From Proposition~\ref{kerisinintern} and Theorem~\ref{Gauusker} we see that for studying kernels of representations of braid-like groups it is important to understand how the groups $VP_n\cap H_n$, $FVP_n\cap FH_n$, $GVP_n\cap GH_n$ look like. The group theoretic motivation for the studying of the groups $VP_n\cap H_n$, $FVP_n\cap FH_n$, $GVP_n\cap GH_n$ lies in the same field with the motivation of the papers \cite{Bar,BarBel,BarBelDom,Rab}. In this section we find the sets of normal generators of the groups $VP_n\cap H_n$, $FVP_n\cap FH_n$, $GVP_n\cap GH_n$. The main result of this section is the following theorem.
\begin{theorem}\label{intersect}Let $G$ be the subgroup of $VB_n$ generated by the elements 
	\begin{align}
\notag	&\lambda_{s,t}\lambda_{t,s}^{-1},&&1\leq s<t\leq n,\\
\notag&\lambda_{t,s}^2,&&1\leq s<t\leq n,\\
\label{gvph}&\lambda_{t,s}\lambda_{r,q}\lambda_{t,s}^{-1}\lambda_{r,q}^{-1},&&(t-r)(t-q)(s-r)(s-q)>0,\\ 
\notag&\lambda_{t,s}\lambda_{s,r}\lambda_{t,s}^{-1}\lambda_{t,r}^{-1},&&n\geq t>s>r\geq 1,\\
\notag&\lambda_{t,s}\lambda_{s,r}\lambda_{t,r}^{-1}\lambda_{s,r}^{-1},&&n\geq t>s>r\geq 1.
\end{align}
Then the normal closure of $G$ in $VP_n$ coincides with $VP_n\cap H_n$.
\end{theorem}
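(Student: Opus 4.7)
The plan is to identify $VP_n\cap H_n$ as the kernel of the restriction $\iota_2|_{VP_n}:VP_n\to S_n$ and to realize $S_n$ as the quotient $VP_n/N$, where $N$ denotes the normal closure of $G$ in $VP_n$. An index argument will then force $N=VP_n\cap H_n$. First I would verify, by a telescoping computation based on formulas~\eqref{lambda} and the values $\iota_2(\sigma_k)=1$, $\iota_2(\rho_k)=(k,k+1)$, that $\iota_2(\lambda_{i,j})=\iota_2(\lambda_{j,i})=(i,j)$ for every $i\ne j$. In particular $\iota_2|_{VP_n}$ is surjective and $VP_n/(VP_n\cap H_n)\cong S_n$.

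The inclusion $N\le VP_n\cap H_n$ is then straightforward: applying $\iota_2$ to each of the five families of generators of $G$ yields precisely the defining relations of $S_n$ from Proposition~\ref{permtrans} under the correspondence $\lambda_{t,s}\leftrightarrow a_{t,s}$, namely the symmetry $\lambda_{s,t}=\lambda_{t,s}$ of transpositions, the involution $a_{t,s}^2=1$, commutativity for non-crossing indices, and the two forms $a_{t,s}a_{s,r}=a_{t,r}a_{t,s}=a_{s,r}a_{t,r}$ of the triangular relation. Hence every generator of $G$ lies in $VP_n\cap H_n$, and so does its normal closure.

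The heart of the argument is the reverse inclusion, which I would obtain by constructing a section $\tau:S_n\to VP_n/N$ of the induced surjection $\overline{\iota_2}:VP_n/N\twoheadrightarrow S_n$. Define $\tau(a_{t,s})=\lambda_{t,s}N$ for $t>s$; well-definedness amounts to exhibiting, for each defining relation of $S_n$ from Proposition~\ref{permtrans}, the corresponding generator of $G$ that witnesses it modulo $N$. The map $\tau$ is surjective because the first family $\lambda_{s,t}\lambda_{t,s}^{-1}$ in~\eqref{gvph} forces $\lambda_{s,t}\equiv\lambda_{t,s}\pmod N$, so $VP_n/N$ is already generated by the classes $\lambda_{t,s}N$ with $t>s$. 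Since $\overline{\iota_2}\circ\tau=\mathrm{id}_{S_n}$, the section $\tau$ is also injective, hence an isomorphism; combining this with $|S_n|=[VP_n:VP_n\cap H_n]$ yields $N=VP_n\cap H_n$.

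The main obstacle is the bookkeeping when verifying that $\tau$ is a homomorphism: one must match each of the defining-relation families of Proposition~\ref{permtrans}, whose generators $a_{t,s}$ only appear with $t>s$, to the corresponding family in~\eqref{gvph}, where $\lambda_{s,r}$, $\lambda_{t,s}$, $\lambda_{t,r}$ appear together in the same relation and conventions about index order must be tracked carefully. Once this dictionary is pinned down, the rest of the proof is a short and elementary counting argument.
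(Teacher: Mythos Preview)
Your proposal is correct and follows essentially the same route as the paper. Both arguments reduce to showing (i) the normal closure $N$ lies in $\ker(\iota_2|_{VP_n})=VP_n\cap H_n$ because $\iota_2(\lambda_{i,j})=(i,j)$ turns each family in~\eqref{gvph} into a defining relation of $S_n$ from Proposition~\ref{permtrans}, and (ii) the quotient $VP_n/N$ is already $S_n$, which together with $[VP_n:VP_n\cap H_n]=|S_n|$ forces equality; the paper carries out (ii) by writing down the full presentation of $VP_n/N$ and observing that the original $VP_n$-relations become redundant, whereas you package the same observation as a surjective section $\tau:S_n\to VP_n/N$ of $\overline{\iota_2}$.
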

\begin{proof}Denote by $H$ the normal closure of $G$ in $VP_n$, and let us prove that $H=VP_n\cap H_n$. First, let us prove that $H\leq VP_n\cap H_n$. Since $VP_n\cap H_n$ is a normal subgroup of $VP_n$, in order to prove that $H$ is a subset of $VP_n\cap H_n$ it is enough to prove that $G$ is a subset of $VP_n\cap H_n$. The inclusion $G\leq VP_n$ follows from the fact that the generators of $G$ are written in terms of generators $\lambda_{i,j}$ of $VP_n$. In order to prove that $G\leq H_n$ let us consider the homomorphism $\iota_2:VB_n\to S_n$ which maps the generator $\sigma_i$ to the unit element of $S_n$, and maps the generator $\rho_i$ to the transposition $(i, i+1)$ for $i=1,2,\dots,n-1$. From the definition of the elements $\lambda_{i,j}$ it is clear that the homomorphism $\iota_2$ maps the element $\lambda_{i,j}$ to the transposition $(i,j)$. From this fact and formulas (\ref{gvph}) which define the generators of the group $G$ it is clear that all generators of $G$ belong to the kernel of $\iota_2$. Hence, the group $G$ is a subset of ${\rm ker}(\iota_2)=H_n$, and therefore $H\leq VP_n\cap H_n$.
	
Let us prove that $VP_n\cap H_n\leq H$. In order to do it, consider the quotient 
$$(VP_n\cap H_n)/H=(VP_n/H)\cap (H_n/H).$$ 
By the definition of the quotient group, the group $VP_n/H$ can be obtained from the group $VP_n$ adding new relations (\ref{gvph}). Therefore, from Proposition~\ref{vpngen}(1) it follows that the group $VP_n/H$ admits a presentation with the generators $\lambda_{k,l}~(1 \le k \not= l \le n)$, relations
\begin{align}
\label{one1}\lambda_{i,j}\lambda_{k,l} &= \lambda_{k,l}\lambda_{i,j}, \\
\label{one2}\lambda_{k,i}\lambda_{k,j}\lambda_{i,j} &= \lambda_{i,j}\lambda_{k,j}\lambda_{k,i},
\end{align}
where distinct letters denote distinct indices, and additional relations
\begin{align}
\label{one3}\lambda_{s,t}&=\lambda_{t,s},&&1\leq s<t\leq n,\\
\label{one4}\lambda_{t,s}^2&=1,&&1\leq s<t\leq n,\\
\label{one5}\lambda_{t,s}\lambda_{r,q}&=\lambda_{r,q}\lambda_{t,s},&&(t-r)(t-q)(s-r)(s-q)>0,\\ 
\label{one6}\lambda_{t,s}\lambda_{s,r}&=\lambda_{t,r}\lambda_{t,s},&&n\geq t>s>r\geq 1,\\
\label{one7}\lambda_{t,s}\lambda_{s,r}&=\lambda_{s,r}\lambda_{t,r},&&n\geq t>s>r\geq 1.
\end{align}
From Proposition~\ref{permtrans} it follows that the group with the generators $\lambda_{k,l}~(1 \le k \not= l \le n)$ and defining relations (\ref{one3})-(\ref{one7}) is the permutation group $S_n$, where $\lambda_{k,l}$ denotes the transposition $(k,l)$. If for $1 \le k \not= l \le n$ we write the transposition $(k,l)$ instead of $\lambda_{k,l}$ in relations (\ref{one1}), (\ref{one2}), then we get correct equalities. This fact means that relations (\ref{one1}), (\ref{one2}) follow from relations (\ref{one3})-(\ref{one7}), hence, the quotient $VP_n/H$ admits a presentation with the generators $\lambda_{k,l}~(1 \le k \not= l \le n)$ and relations
\begin{align*}
\lambda_{s,t}&=\lambda_{t,s},&&1\leq s<t\leq n,\\
\lambda_{t,s}^2&=1,&&1\leq s<t\leq n,\\
\lambda_{t,s}\lambda_{r,q}&=\lambda_{r,q}\lambda_{t,s},&&(t-r)(t-q)(s-r)(s-q)>0,\\ 
\lambda_{t,s}\lambda_{s,r}&=\lambda_{t,r}\lambda_{t,s},&&n\geq t>s>r\geq 1,\\
\lambda_{t,s}\lambda_{s,r}&=\lambda_{s,r}\lambda_{t,r},&&n\geq t>s>r\geq 1.
\end{align*}
From Proposition~\ref{permtrans} it follows that the group $VP_n/H$ is isomorphic to the full permutation group $S_n$, where the isomorphism maps the element $\lambda_{k,l}~(1 \le k \not= l \le n)$ to the transposition $(k,l)$.

Let $x$ be an element from $VP_n\cap H_n$. Denote by $\overline{x}$ the image of $x$ in $(VP_n\cap H_n)/H$. Since $(VP_n\cap H_n)/H=(VP_n/H)\cap (H_n/H)$, we have $\overline{x}\in VP_n/H$. Since $VP_n/H$ is isomorphic to the full permutation group $S_n$, where the isomorphism maps the element $\lambda_{k,l}~(1 \le k \not= l \le n)$ to the transposition $(k,l)$, and  $\overline{x}\in VP_n/H$, the element $x$ can be written in the form $x=yz$, where $y$ is the element from the preimage of $S_n$ in $VP_n$ and $z\in H$. Since $x\in H_n$ the image of $x$ under the the homomorphism $\iota_2:VB_n\to S_n$ which maps the generator $\sigma_i$ to the unit element of $S_n$, and maps the generator $\rho_i$ to the transposition $(i, i+1)$ for $i=1,2,\dots,n-1$ has to be identity. Therefore we have the equality 
$$1=\iota_2(x)=\iota_2(yz)=\iota_2(y)\iota_2(z).$$
 Since $z\in H\leq H_n$, we have $\iota_2(z)=1$, therefore $\iota_2(y)=1$. Since $\iota_2$ maps  the element $\lambda_{k,l}$ to the transposition $(k,l)$ and $y$ belongs to the preimage of $S_n$ in $VP_n$, the equality $\iota_2(y)=1$ implies that $y=1$. Hence, $x=z$, therefore $VP_n\cap H_n\leq H$ and $H=VP_n\cap H_n$.
\end{proof}
The group $FVB_n$ is the quotient of  $VB_n$ by the relations $\sigma_i^2=1$ for $i=1,2,\dots,n-1$. In this group the equality $\lambda_{t,s}=\lambda_{s,t}^{-1}$ holds for all $t,s$. The group $GVB_n$ is the quotient of the group $FVB_n$ by the relations $\sigma_i\rho_i=\rho_i\sigma_i$ for $i=1,2,\dots,n-1$. in this group the equality $\lambda_{t,s}^2=1$ holds for all $t,s$. These facts together with Theorem~\ref{intersect} imply the following corollary.
\begin{corollary}The following statements hold.
	\begin{enumerate}
		\item Let $G$ be the subgroup of $FVB_n$ generated by the elements 
		\begin{align}
		\notag	&\lambda_{s,t}\lambda_{t,s}^{-1},&&1\leq s<t\leq n,\\
		\notag&\lambda_{t,s}\lambda_{r,q}\lambda_{t,s}^{-1}\lambda_{r,q}^{-1},&&(t-r)(t-q)(s-r)(s-q)>0,\\ 
		\notag&\lambda_{t,s}\lambda_{s,r}\lambda_{t,s}^{-1}\lambda_{t,r}^{-1},&&n\geq t>s>r\geq 1,\\
		\notag&\lambda_{t,s}\lambda_{s,r}\lambda_{t,r}^{-1}\lambda_{s,r}^{-1},&&n\geq t>s>r\geq 1.
		\end{align}
		Then the normal closure of $G$ in $FVP_n$ coincides with $FVP_n\cap FH_n$.
		\item Let $G$ be the subgroup of $GVB_n$ generated by the elements 
		\begin{align}
		\notag&\lambda_{t,s}\lambda_{r,q}\lambda_{t,s}^{-1}\lambda_{r,q}^{-1},&&(t-r)(t-q)(s-r)(s-q)>0,\\ 
		\notag&\lambda_{t,s}\lambda_{s,r}\lambda_{t,s}^{-1}\lambda_{t,r}^{-1},&&n\geq t>s>r\geq 1,\\
		\notag&\lambda_{t,s}\lambda_{s,r}\lambda_{t,r}^{-1}\lambda_{s,r}^{-1},&&n\geq t>s>r\geq 1.
		\end{align}
		Then the normal closure of $G$ in $GVP_n$ coincides with $GVP_n\cap GH_n$.
	\end{enumerate}
\begin{question}{\rm Find the set of generators and the set of defining relations for groups $VP_n\cap H_n$, $FVP_n\cap FH_n$, $GVP_n\cap GH_n$.}
\end{question}	
\end{corollary}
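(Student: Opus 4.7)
The plan is to derive both parts of the corollary by pushing Theorem~\ref{intersect} through the natural surjections $\pi_1\colon VP_n \twoheadrightarrow FVP_n$ and $\pi_2\colon FVP_n \twoheadrightarrow GVP_n$ obtained by restricting the quotient maps $VB_n \twoheadrightarrow FVB_n \twoheadrightarrow GVB_n$. The key preliminary observation is that each such surjection $\pi$ restricts to a surjection of the Rabenda-type intersections; for instance, $\pi_1(VP_n \cap H_n) = FVP_n \cap FH_n$. This holds because the maps $\iota_2$ factor through each quotient (the elements $\sigma_i$ and $\sigma_i^2$ both map to $1 \in S_n$ under $\iota_2$), so any $y \in FVP_n \cap FH_n$ lifts to some $x \in VP_n$ with $\iota_2(x) = \iota_2(y) = 1$, placing $x$ in $VP_n \cap H_n$ with $\pi_1(x) = y$.

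Next, applying $\pi_1$ to the conclusion of Theorem~\ref{intersect} and using that surjective homomorphisms carry normal closures to normal closures, I conclude that $FVP_n \cap FH_n$ equals the normal closure in $FVP_n$ of $\pi_1(G_{VP})$, where $G_{VP}$ denotes the generating set from Theorem~\ref{intersect}. It then remains to identify $\pi_1(G_{VP})$, up to normal closure, with the generating set stated in part~(1). By Proposition~\ref{vpngen}(2) the group $FVP_n$ is presented on generators $\lambda_{i,j}$ with $i<j$ only, so the symbol $\lambda_{t,s}$ with $t>s$ must be read as $\lambda_{s,t}^{-1}$. Under this convention the generator $\lambda_{s,t}\lambda_{t,s}^{-1}$ of the corollary equals $\lambda_{s,t}^2$, while the generator $\lambda_{t,s}^2$ of Theorem~\ref{intersect} equals $\lambda_{s,t}^{-2}$, i.e.\ the inverse of the former; hence the $\lambda_{t,s}^2$ family becomes redundant in the normal closure and may be dropped. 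The three remaining families translate symbol-for-symbol, yielding part~(1).

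Part~(2) follows by the same argument applied to $\pi_2$, or equivalently to the composition $\pi_2\pi_1 \colon VP_n \twoheadrightarrow GVP_n$. Here Proposition~\ref{vpngen}(3) gives the additional relation $\lambda_{i,j}^2 = 1$ in $GVP_n$, so $\lambda_{s,t}\lambda_{t,s}^{-1} = \lambda_{s,t}^2 = 1$ in $GVP_n$ and this generator also drops from the list, leaving exactly the three families stated in part~(2).

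The only delicate point of the argument is keeping the notational conventions consistent across the three groups $VP_n$, $FVP_n$, $GVP_n$: the apparent reduction in the number of generating families between Theorem~\ref{intersect} and the corollary is precisely accounted for by the additional relations $\lambda_{t,s} = \lambda_{s,t}^{-1}$ in $FVP_n$ (which makes $\lambda_{t,s}^2$ redundant) and $\lambda_{s,t}^2 = 1$ in $GVP_n$ (which makes $\lambda_{s,t}\lambda_{t,s}^{-1}$ redundant). Beyond this bookkeeping, the whole proof is a formal consequence of Theorem~\ref{intersect} together with the standard fact that surjective homomorphisms preserve normal closures.
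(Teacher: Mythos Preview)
Your proposal is correct and follows essentially the same route as the paper, which simply remarks before the corollary that in $FVB_n$ one has $\lambda_{t,s}=\lambda_{s,t}^{-1}$ and in $GVB_n$ one has $\lambda_{t,s}^2=1$, and that together with Theorem~\ref{intersect} this yields the result. You have carefully filled in the mechanism the paper leaves implicit---pushing Theorem~\ref{intersect} through the surjections $VP_n\twoheadrightarrow FVP_n\twoheadrightarrow GVP_n$, checking that these surjections carry $VP_n\cap H_n$ onto the corresponding intersections (because $\iota_2$ factors through each quotient), and then tracking how the five generating families collapse to four and then three under the extra relations---and this is exactly the intended argument.
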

\section{Group invariant for flat welded links}\label{invvvv}
In the papers \cite{BarNas1, BarNas2} the notion of a multi-switch was introduced. Using this notion one can construct representations for virtual braids  \cite{BarNas1} and invariants for virtual links  \cite{BarNas2}. In this section we recall the notion of a multi-switch, and using this notion and the representation $\theta:FVB_n\to {\rm Aut}(F_{2n})$ constructed in Section~\ref{hitheta} we introduce a group invariant for flat welded links. 

Let $X$ be a non empty set, and $X_1, X_2, \dots, X_m$ be non-empty subsets of $X$. One can identify the sets
$(X\times X_1\times X_2\times\dots\times X_m)^2$ and $X^2\times X_1^2\times X_2^2\times\dots\times X_m^2$ denoting the ordered pair $(A,B)$ of the elements $A=(a_0,a_1,\dots,a_m), B=(b_0,b_1,\dots,b_m)$ from $X\times X_1\times X_2\times\dots\times X_m$ by $(A,B)=(a_0,b_0; a_1,b_1;a_2,b_2;\dots;a_m,b_m)$.

A map $S:X^2\times X_1^2\times \dots\times X_m^2\to X^2\times X_1^2\times \dots\times X_m^2$ is called an $(m+1)$-switch (or a multi-switch), if $S$ satisfies the equality $(S\times id)(id\times S)(S\times id)=(id\times S)(S\times id)(id\times S)$, where $id$ denotes the identity on $X\times X_1\times X_2\times\dots\times X_m$, and $S$ has the form
$$S(c_0,c_1,\dots,c_m)=(S_0(c_0,c_1,\dots,c_m),S_1(c_1),\dots,S_m(c_m))$$
for $c_0\in X^2$, $c_i\in X_i^2$ for $i=1,2,\dots,m$,
and the maps $S_0, S_1,\dots,S_m$ 
\begin{align}
\notag S_0&:X^2 \times X_1^2 \times \dots \times X_m^2 \to X^2,\\
\notag S_i&:X_i^2  \to X_i^2,~\text{for}~i = 1, 2, \dots, m.
\end{align}
The fact that $S$ is an $(m+1)$-switch on $X$ defined by the maps $S_0,S_1,\dots,S_m$ is denoted by $S=(S_0,S_1,\dots,S_m)$. 

Let $S, V: X^2\times X_1^2\times X_2^2\times\dots\times X_m^2\to X^2\times X_1^2\times X_2^2\times\dots\times X_m^2$ be two $(m+1)$-switches on $X$. The ordered pair $(S,V)$ is called a virtual $(m+1)$-switch on $X$, if $V^2$ is the identity on $X^2\times X_1^2\times X_2^2\times\dots\times X_m^2$, and the equality
$
(id \times V) (S \times id)(id \times V)  =   (V \times id)(id \times S)(V \times id)
$ holds. Let us show how virtual multi-switches can be used for constructing representations of virtual braids.

Let $X$ be an algebraic system generated by the elements $\{x^{i}_j~|~i=0,1,\dots,m,j=1,2,\dots,n\}$, such that $\{x^{i}_{1}, x^{i}_2,\dots,x^{i}_{n}\}\cap\{x^{j}_{1}, x^{j}_2,\dots,x^{j}_{n}\}=\varnothing$ for $i\neq j$. For $i=0,1,2,\dots,m$ let $X_i$ be a subset of $X$ which contains the elements $x^{i}_{1},x^{i}_{2},\dots,x^{i}_{n}$. Let $S=(S_0,S_1,\dots,S_m)$, $V=(V_0,V_1,\dots,V_m)$ be a virtual $(m+1)$-switch on $X$ such that
\begin{align}
\notag S_0=(S_0^l,S_0^r), V_0=(V_0^l,V_0^r)&:X^2 \times X_1^2 \times X_2^2 \times \dots \times X_m^2 \to X^2,\\
\notag S_i=(S_i^l,S_i^r), V_i=(V_i^l,V_i^r)&:X_i^2  \to X_i^2,~\text{for}~i = 1, 2, \dots, m,
\end{align}
and for $i=0,1,\dots,m$ the images of maps $S_i^l,S_i^r, V_i^l,V_i^r$  are words over its arguments in terms of operations of $X$. For $j=1,2,\dots,n-1$ denote by $R_j,G_j$ the following maps from the set of generators  $\{x^{i}_j~|~i=0,1,\dots,m,j=1,2,\dots,n\}$ of $X$ to $X$
\begin{align}
\label{fj} R_j :&\begin{cases}
x^0_{j} \mapsto S_0^l(x^0_{j}, x^0_{j+1}, x^1_{j}, x^1_{j+1}, \dots, x^m_{j}, x^m_{j+1}),\\
x^0_{j+1} \mapsto S_0^r(x^0_{j}, x^0_{j+1}, x^1_{j}, x^1_{j+1}, \dots, x^m_{j}, x^m_{j+1}),\\
x^1_{j} \mapsto S_1^l(x^1_{j}, x^1_{j+1}),\\
x^1_{j+1} \mapsto S_1^r(x^1_{j}, x^1_{j+1}), \\
~~\vdots\\
x^m_{j} \mapsto S_m^l(x^m_{j}, x^m_{j+1}),\\
x^m_{j+1} \mapsto S_m^r(x^m_{j}, x^m_{j+1}),
\end{cases}\end{align}
\begin{align}
\label{gj} G_j :&\begin{cases}
x^0_{j} \mapsto V_0^l(x^0_{j}, x^0_{j+1}, x^1_{j}, x^1_{j+1}, \dots, x^m_{j}, x^m_{j+1}),\\
x^0_{j+1} \mapsto V_0^r(x^0_{j}, x^0_{j+1}, x^1_{j}, x^1_{j+1}, \dots, x^m_{j}, x^m_{j+1}),\\
x^1_{j} \mapsto V_1^l(x^1_{j}, x^1_{j+1}),\\
x^1_{j+1} \mapsto V_1^r(x^1_{j}, x^1_{j+1}), \\
~~\vdots\\
x^m_{j} \mapsto V_m^l(x^m_{j}, x^m_{j+1}),\\
x^m_{j+1} \mapsto V_m^r(x^m_{j}, x^m_{j+1}),
\end{cases}
\end{align}
where all generators which are not explicitly written in $R_j, G_j$ are fixed. If for $j=1,2,\dots,n-1$ the maps $R_j, G_j$ induce automorphisms of $X$, then we say that $(S,V)$ is an automorphic virtual multi-switch (shortly, AVMS) with respect to the set of generators $\{x^i_{j}~|~i=0,1,\dots,m,j=1,2,\dots,n\}$. The following result is proved in \cite[Theorem~2]{BarNas1}.

\begin{proposition}\label{vautrepr}
	Let $(S, V)$ be an AVMS on $X$
	with respect to the set of generators $\{x^i_{j}~|~i=0,1,\dots,m,j=1,2,\dots,n\}$. Then the map
	$$
	\varphi_{S,V} : VB_n \to {\rm Aut}(X)
	$$
	which is defined on the generators of $VB_n$ as
	\begin{align}
	\notag\varphi_{S,V}(\sigma_j) = R_j,&&\varphi_{S,V}(\rho_j) = G_j,&&{\text for}~j = 1, 2, \dots, n-1,
	\end{align}
	where $R_j$, $G_j$ are defined by equalities (\ref{fj}), (\ref{gj}),
	is a representation of $VB_n$.
\end{proposition}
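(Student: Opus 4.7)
The plan is to verify that $\varphi_{S,V}$ preserves every defining relation of $VB_n$, namely relations (\ref{def2})--(\ref{def8}), and thereby extends from the generating set to a group homomorphism; faithfulness is not claimed. The AVMS hypothesis already guarantees that each $R_j$ and each $G_j$ is an automorphism of $X$, so the only task is the relation-checking. I would group the defining relations into four blocks and handle each block by identifying which hypothesis of the virtual multi-switch $(S,V)$ is responsible.

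First I would dispose of the ``far commutation'' relations (\ref{def2}), (\ref{def5}), (\ref{def7}) for indices $|i-j|\ge 2$. By construction $R_j$ and $G_j$ act identically on every generator $x^i_k$ with $k\notin\{j,j+1\}$, so for $|i-j|\ge 2$ the supports of $R_i,G_i$ and $R_j,G_j$ are disjoint subsets of $\{x^0_1,\dots,x^m_n\}$ and the corresponding automorphisms commute on generators, hence on all of $X$. Next, the involutivity (\ref{def4}) $\rho_i^2=1$ translates to $G_i^2=\mathrm{id}$, which follows at once from $V^2=\mathrm{id}$ applied component-by-component (noting that $V$ decomposes as $(V_0,V_1,\dots,V_m)$ with $V_i$ acting only on $X_i^2$ for $i\ge 1$, while $V_0$ produces the new values of the $x^0$-variables).

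The two three-strand braid relations (\ref{def3}) and (\ref{def6}) are the heart of the argument. The relation $\sigma_i\sigma_{i+1}\sigma_i=\sigma_{i+1}\sigma_i\sigma_{i+1}$ becomes $R_iR_{i+1}R_i = R_{i+1}R_iR_{i+1}$, and this is exactly the equation $(S\times \mathrm{id})(\mathrm{id}\times S)(S\times \mathrm{id})=(\mathrm{id}\times S)(S\times \mathrm{id})(\mathrm{id}\times S)$ read off on the three-strand subsystem indexed by $\{j,j+1,j+2\}$ with $j=i$, once one recognises that the factors $S_1,\dots,S_m$ act independently on the pairs $(x^k_j,x^k_{j+1})$ and that the $S_0$-block is the one tracked by the ``outer'' tensor factor in the switch equation. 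The relation $\rho_i\rho_{i+1}\rho_i=\rho_{i+1}\rho_i\rho_{i+1}$ is proved identically, using that $V$ is itself a multi-switch. Finally the mixed relation (\ref{def8}) $\rho_i\rho_{i+1}\sigma_i=\sigma_{i+1}\rho_i\rho_{i+1}$ is obtained by unpacking the hypothesis $(\mathrm{id}\times V)(S\times\mathrm{id})(\mathrm{id}\times V)=(V\times\mathrm{id})(\mathrm{id}\times S)(V\times\mathrm{id})$ on the same three-strand subsystem, after rewriting it in the equivalent ``moved-to-one-side'' form that matches (\ref{def8}) under the right-action convention fixed at the beginning of Section~\ref{hitheta}.

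The main obstacle will be the third block, in particular the $S_0$-component of the braid relation for $R_j$: unlike $S_i$ for $i\ge 1$ which depends only on $X_i^2$, the map $S_0$ depends on all of $X^2\times X_1^2\times\cdots\times X_m^2$, so verifying the three-strand equation on the distinguished generators $x^0_j,x^0_{j+1},x^0_{j+2}$ requires careful bookkeeping of how the intermediate values in $X_1,\dots,X_m$ are updated by the auxiliary applications of $S_1,\dots,S_m$ before being fed back into $S_0$. The cleanest way to handle this is to interpret $R_j$ literally as the restriction of $S\times\mathrm{id}$ (and $R_{j+1}$ as $\mathrm{id}\times S$) to the free algebra on three strands, so that the three-strand switch equation applied entrywise produces the required automorphism equality; the remaining strands are untouched by all three composed maps and play no role. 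With this identification set up once, the braid relation for $G_j$ and the mixed relation reduce to the same formal manipulation with $V$ or with the pair $(S,V)$ in place of $S$.
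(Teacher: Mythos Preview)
The paper does not give its own proof of this proposition; it simply cites it as \cite[Theorem~2]{BarNas1}. Your proposal is the standard and correct route: the far-commutation relations (\ref{def2}), (\ref{def5}), (\ref{def7}) follow from disjointness of the strand supports $\{j,j+1\}$ and $\{i,i+1\}$; the involution (\ref{def4}) follows from $V^2=\mathrm{id}$; the braid relations (\ref{def3}) and (\ref{def6}) are exactly the set-theoretic Yang--Baxter equations for $S$ and $V$ read on the three consecutive strands; and the mixed relation (\ref{def8}) is the compatibility axiom $(\mathrm{id}\times V)(S\times\mathrm{id})(\mathrm{id}\times V)=(V\times\mathrm{id})(\mathrm{id}\times S)(V\times\mathrm{id})$. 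Your identification of $R_i$ with $S\times\mathrm{id}$ and $R_{i+1}$ with $\mathrm{id}\times S$ on the three-strand block is the right bookkeeping device, and your remark that the $S_0$-component requires tracking the auxiliary updates of the $X_1,\dots,X_m$ coordinates is precisely the point where the ``multi'' in multi-switch matters. There is nothing to add; this is the argument one finds in \cite{BarNas1}.
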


Let $X$ be the free group $F_{2n}$ with the free generators $x_1^0,x_2^0,\dots,x_n^0,x_1^1,x_2^1,\dots,x_n^1$. Denote by $X_0$ the subgroup of $X$ generated by the elements  $x_1^0,x_2^0,\dots,x_n^0$, and by $X_1=\{x_1^1,x_2^1,\dots,x_n^1\}$. Let $S,V:X^2\times X_1^2\to X^2\times X_1^2$ be the maps given by the following formulas
\begin{align}\label{thetaswitch}
S(a,b;x,y)=(by,ay^{-1};x,y),&&V(a,b;x,y)=(b,a;y,x)
\end{align}
for $a,b\in X$, $x,y\in X_1$. It is easy to check that $(S,V)$ is a virtual $2$-switch on $X$. If we apply Proposition~\ref{vautrepr} to this multi-switch, then we get the representation $\varphi_{S,V}=\theta:FVB_n\to{\rm Aut}(F_{2n})$ introduced in Section~\ref{hitheta} (where $x_i$ is changed by $x_i^0$, and $y_i$ is changed by $x_i^1$).

Let us show how this representation can be used for constructing an invariant for flat welded links. In order to do it, let us recall the definition of a biquandle. A biquandle $BQ$ is an algebraic system with two binary algebraic operations $(a, b) \mapsto a^b$, $(a, b) \mapsto a_b$ which satisfy the following axioms.
\begin{enumerate}
	\item For all $a\in BQ$ the maps $f^a,f_a:BQ\to BQ$ given by the equalities $f^a(x)=x^a$, $f_a(x)=x_a$ for $x\in BQ$ are bijections on $BQ$.
	\item For all $a\in BQ$ the equalities $a^{a^{-1}}=a_{a^{a^{-1}}}$,  $a_{a^{-1}}=a^{a_{a^{-1}}}$ hold.
	\item For all $a,b,c\in Q$ the equalities  $a^{bc}=a^{c_bb^c}$, $a_{bc}=a_{c^bb_c}$, $(a_b)^{c_{b^a}}=(a^{c})_{b^{c_a}}$ hold.
\end{enumerate}

Let $S:(X\times X_1\times \dots\times X_m)^2\to (X\times X_1\times \dots\times X_m)^2$ be a multi-switch on $X$, such that $S(A,B)=(S^l(A,B), S^r(A,B))$ for $A,B\in X\times X_1\times \dots\times X_m$. Denote by
\begin{align}
\notag A_B=S^l(B,A),&& A^B=S^r(A,B).
\end{align}
If the algebraic system $X\times X_1\times \dots\times X_m$ with the operations $(A,B)\mapsto A^B$, $(A,B)\mapsto A_B$ is a biquandle, then the multi-switch $S$ is called a biquandle multi-switch. 

Let $X$ be an algebraic system generated by the elements $\{x^{i}_j~|~i=0,1,\dots,m,j=1,2,\dots,n\}$, such that $\{x^{i}_{1}, x^{i}_2,\dots,x^{i}_{n}\}\cap\{x^{j}_{1}, x^{j}_2,\dots,x^{j}_{n}\}=\varnothing$ for $i\neq j$. For $i=0,1,2,\dots,m$ let $X_i$ be a subset of $X$ which contains elements $x^{i}_{1},x^{i}_{2},\dots,x^{i}_{n}$. Let $S=(S_0,S_1,\dots,S_m)$, $V=(V_0,V_1,\dots,V_m)$ be an automorphic virtual $(m+1)$-switch on $X$ with respect to the set of generators $\{x^{i}_j~|~i=0,1,\dots,m,j=1,2,\dots,n\}$. From Proposition~\ref{vautrepr} it follows that one can construct a representation $\varphi_{S,V}:VB_n\to{\rm Aut}(X)$. For a braid $\beta\in VB_n$ denote by $X_{S,V}(\beta)$ the quotient of the algebraic system $X$ by the relations $\varphi_{S,V}(x_i^j)=x_i^j$ for  $i=0,1,\dots,m,j=1,2,\dots,n$. The following statement follows from  \cite[Theorems~1,~3]{BarNas2}.

\begin{proposition}\label{invariant}If $S,V$ are biquandle multi-switches on $X$, then the algebraic system $X_{S,V}(\beta)$ depends not on the braid $\beta$, but on the closure $\widehat{\beta}$ of the braid $\beta$.
\end{proposition}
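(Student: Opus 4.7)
The plan is to invoke Kamada's virtual analogue of Markov's theorem, which asserts that two virtual braids $\beta,\beta'$ (on possibly different numbers of strands) have equivalent closures as virtual links if and only if they can be connected by a finite sequence of the following moves applied inside $\bigcup_k VB_k$: conjugation $\beta\leftrightarrow\alpha\beta\alpha^{-1}$ in some $VB_k$, classical stabilization $\beta\leftrightarrow\beta\sigma_k^{\pm 1}$ where $\beta\in VB_k$ is viewed in $VB_{k+1}$, and virtual stabilization $\beta\leftrightarrow\beta\rho_k$. It therefore suffices to prove that each of these three moves preserves the isomorphism class of the algebraic system $X_{S,V}(\beta)$ built from the representation $\varphi_{S,V}$ provided by Proposition~\ref{vautrepr}.

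For conjugation, set $f=\varphi_{S,V}(\alpha)$ and $g=\varphi_{S,V}(\beta)$, both automorphisms of $X$. Unwinding the right-action convention $(uv)(x)=v(u(x))$, one gets $\varphi_{S,V}(\alpha\beta\alpha^{-1})(x)=f^{-1}(g(f(x)))$, so the defining relation $\varphi_{S,V}(\alpha\beta\alpha^{-1})(x_i^j)=x_i^j$ is equivalent to $g(f(x_i^j))=f(x_i^j)$. Since $f$ is an automorphism of $X$, the set $\{f(x_i^j)\}$ is again a system of generators of $X$, so $f$ descends to an isomorphism $X_{S,V}(\beta)\to X_{S,V}(\alpha\beta\alpha^{-1})$ of the quotient systems.

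For classical stabilization, embed $VB_n\hookrightarrow VB_{n+1}$ and present $X_{S,V}(\beta\sigma_n^{\pm 1})$ on the generators $\{x_i^j\mid 0\leq i\leq m,\,1\leq j\leq n+1\}$. Because $\beta$ fixes the $(n+1)$-th strand, $\varphi_{S,V}(\beta)$ fixes every $x_i^{n+1}$, while $\varphi_{S,V}(\sigma_n^{\pm 1})$ acts by $S^{\pm 1}$ only on the coordinates indexed by $n$ and $n+1$. Splitting the defining relations, those indexed by $j\leq n-1$ coincide with the relations for $X_{S,V}(\beta)$, while those indexed by $j=n,n+1$ express each $x_i^{n+1}$ as a biquandle product involving $\varphi_{S,V}(\beta)(x_i^n)$ and $x_i^{n+1}$. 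Biquandle axiom~(1), the bijectivity of the maps $f^a$ and $f_a$, is precisely what lets one Tietze-eliminate the generators $x_i^{n+1}$; the residual relations then force $\varphi_{S,V}(\beta)(x_i^n)=x_i^n$ and recover $X_{S,V}(\beta)$. Virtual stabilization $\beta\leftrightarrow\beta\rho_n$ is handled identically, but requires only the involutivity $V^2={\rm id}$ and the product structure of $V$.

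The main obstacle is carrying out the stabilization step rigorously: one has to verify, coordinate-by-coordinate on $X\times X_1\times\cdots\times X_m$, that biquandle axiom~(1) provides unique solvability for the new generators, while axioms~(2) and~(3) guarantee that the remaining relations pull back without loss to the presentation of $X_{S,V}(\beta)$; this is where the biquandle hypothesis on $S$ is essential, as a generic multi-switch would leave obstructions to the Tietze elimination. The virtual-switch identity $(id\times V)(S\times id)(id\times V)=(V\times id)(id\times S)(V\times id)$ then ensures that the isomorphisms produced by classical and virtual stabilization commute appropriately, so the resulting isomorphism class of $X_{S,V}(\beta)$ depends only on $\widehat{\beta}$.
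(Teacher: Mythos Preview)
The paper does not give its own proof of this proposition: it simply records that ``the following statement follows from \cite[Theorems~1,~3]{BarNas2}'' and moves on. So there is no in-paper argument to compare against; your sketch is an attempt to supply what the cited reference presumably contains.

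Your overall strategy---invoke a Markov-type theorem for virtual braids and check invariance under each move---is the standard one and is almost certainly the route taken in \cite{BarNas2}. However, your statement of Kamada's theorem is incomplete. In \cite{Kam} the Markov-type result for virtual braids has \emph{three} families of moves: (VM1) conjugation, (VM2) positive/negative/virtual right stabilization, and (VM3) right and left \emph{virtual exchange moves} (of the form $\beta_1\sigma_n^{\varepsilon}\beta_2\sigma_n^{-\varepsilon}\leftrightarrow\beta_1\rho_n\beta_2\rho_n$ with $\beta_1,\beta_2\in VB_n$). You have omitted (VM3) entirely, and it is known that (VM3) does not follow from (VM1)--(VM2). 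Until you verify that $X_{S,V}(\beta)$ is preserved under the virtual exchange moves, the argument is genuinely incomplete; this step again uses the biquandle hypothesis together with the mixed relation between $S$ and $V$.

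Two smaller points. First, the role you assign to the biquandle axioms in the stabilization step is slightly off: it is axiom~(2) (the ``type~I'' condition) that is the essential ingredient allowing the new strand to be collapsed without residual relations, not axiom~(1) alone. Second, the last sentence misidentifies the purpose of the identity $(id\times V)(S\times id)(id\times V)=(V\times id)(id\times S)(V\times id)$: this is the compatibility needed for $\varphi_{S,V}$ to be a representation (Proposition~\ref{vautrepr}), not a statement about commuting stabilization isomorphisms.
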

 
From the Alexander theorem for virtual links \cite[Proposition~3]{Kam} it follows that every virtual link diagram $D$ is equivalent to the closure of some virtual braid $\beta$ from $VB_n$. From Proposition~\ref{invariant} it follows that $X_{S,V}(\beta)$ can be written as $X_{S,V}(D)$ since $X_{S,V}(\beta)$ depend not on $\beta$, but on its closure $D$. Therefore, if $S,V$ are biquandle multi-switches on $X$, then the map 
$$D\mapsto\beta\mapsto X_{S,V}(\beta)=X_{S,V}(D)$$
from the set of virtual link diagrams to the set of algebraic systems from the same category as $X$, is an invariant for virtual links.

Since the representation $\theta:FVB_n\to {\rm Aut}(F_{2n})$ can be obtained using Proposition~\ref{vautrepr} and the virtual 2-switch given by formulas (\ref{thetaswitch}), one can try to apply Proposition~\ref{invariant} in order to construct an invariant for flat virtual links. Unfortunately, Proposition~~\ref{invariant} cannot be directly applied since the 2-switch $S$ given by formulas (\ref{thetaswitch}) is not a biquandle $2$-switch (the first axiom of a biquandle does not hold). However, if we let $x_1^1=x_2^1=\dots=x_n^1$, then $(S,V)$ becomes a biquandle $2$-switch. From this fact and Proposition~\ref{invariant} we have the following statement.
\begin{theorem}\label{lastinv}For a braid $\beta\in FVB_n$ denote by $G(\beta)$ the quotient of the free group $F_{2n}$ with the free generators $x_1,x_2,\dots,x_n,y_1,y_2,\dots,y_n$ by the relations $\theta(\beta)(x_i)=x_i$, $\theta(\beta)(y_i)=y_i$ for $i=1,2,\dots,n$, and the relations $y_1=y_2=\dots=y_n$.  Then $G(\beta)$ depends not on the braid $\beta$, but on the closure $\widehat{\beta}$ of the braid $\beta$.
\end{theorem}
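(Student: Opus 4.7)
The plan is to apply Proposition~\ref{invariant} directly, after collapsing the virtual $2$-switch $(S,V)$ of formulas~(\ref{thetaswitch}) to one in which the failing biquandle axiom is restored. As noted in the discussion preceding the theorem, $(S,V)$ itself is not a biquandle $2$-switch because the first biquandle axiom fails: the map $f^{B}\colon(a,y_i)\mapsto(ay_j^{-1},y_i)$ is not a bijection of $X\times X_1$, since every image is forced to have second coordinate $y_i$. The natural fix is to collapse the set $X_1$ to a single element $y$.

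First I would check that $\theta$ descends to the quotient $F_{n+1}$ of $F_{2n}$ by the relations $y_1=y_2=\cdots=y_n$, which is the free group on $x_1,\ldots,x_n,y$. This is immediate from formulas~(\ref{mainrepr}): each $\theta(\sigma_i)$ fixes every $y_j$, and each $\theta(\rho_i)$ permutes the $y_j$ by a transposition, so the normal subgroup of $F_{2n}$ generated by the elements $y_iy_j^{-1}$ is $\theta(FVB_n)$-invariant, and $\theta$ descends to a representation $\overline{\theta}\colon FVB_n\to{\rm Aut}(F_{n+1})$. Then I would realise $\overline{\theta}$ through Proposition~\ref{vautrepr} applied to $X=F_{n+1}$ with $X_0=\langle x_1,\ldots,x_n\rangle$, $X_1=\{y\}$, and generators $x_i^0:=x_i$, $x_i^1:=y$ for every $i$; the pair $(S,V)$ of~(\ref{thetaswitch}) is still an AVMS on $X$ in this collapsed form, and the induced representation is exactly $\overline{\theta}$.

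The heart of the argument is the verification that the collapsed pair $(S,V)$ is now a biquandle $2$-switch. After collapsing, the operations simplify to $(a,y)^{(b,y)}=(ay^{-1},y)$ and $(a,y)_{(b,y)}=(ay,y)$, so biquandle axiom~(1) holds with $f^{B}$ and $f_{B}$ being right multiplication by $y^{\mp 1}$ on the first coordinate; axioms~(2) and~(3) then reduce to short free-group identities already encoded in the Yang-Baxter relation for $S$ used when showing $(S,V)$ is a virtual multi-switch. Proposition~\ref{invariant} now yields that the quotient $X_{S,V}(\beta)$ of $F_{n+1}$ by $\overline{\theta}(\beta)(x_i)=x_i$ and $\overline{\theta}(\beta)(y)=y$ depends only on the closure $\widehat{\beta}$. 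Pulling this back through the surjection $F_{2n}\twoheadrightarrow F_{n+1}$, and noting that once $y_1=\cdots=y_n$ is imposed the relations $\theta(\beta)(y_i)=y_i$ become automatic (since $\theta(\beta)$ merely permutes the $y_j$), one identifies $X_{S,V}(\beta)$ with the group $G(\beta)$ of the statement. The main place where care is needed is the biquandle verification, together with a sanity check that Proposition~\ref{invariant} genuinely applies when $X_1$ has been collapsed to a single element rather than the set of $n$ distinct generators originally envisaged in~\cite{BarNas2}.
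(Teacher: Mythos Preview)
Your proposal is correct and follows essentially the same route as the paper: the paper's argument (given in the paragraph immediately preceding the theorem) is precisely to collapse $x_1^1=\cdots=x_n^1$ so that $(S,V)$ becomes a biquandle $2$-switch and then invoke Proposition~\ref{invariant}, identifying the resulting $X_{S,V}(\beta)$ with $G(\beta)$. You supply considerably more detail than the paper does---in particular the explicit descent of $\theta$ to $F_{n+1}$, the biquandle computations, and the observation that the relations $\theta(\beta)(y_i)=y_i$ become automatic---but the strategy is identical.
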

The group $G(\beta)$ from Theorem~\ref{lastinv} is the algebraic system $X_{S,V}(\beta)$ from Proposition~\ref{invariant} where the virtual $2$-switch $(S,V)$ is given by formulas (\ref{thetaswitch}). Since $\theta:FVB_n\to{\rm Aut}(F_{2n})$ is the representation of the flat virtual braid group, the group $G(\beta)$ is an invariant for flat virtual links. If in formulas (\ref{mainrepr}) which define the representation $\theta:FVB_n\to{\rm Aut}(F_{2n})$ we put $y_1=y_2=\dots=y_n$, then the resulting representation preserves the forbidden relations. Therefore, the group $G(\beta)$ gives an invariant for flat welded links.

As an example, let us calculate the group $G(\beta)$ in the case when  $\beta=(\rho_1\sigma_1)^m$ for some $m\geq0$. If $m=1$, then the closure of the braid $\beta$ is the flat virtual Hopf link. The braid $\beta$ belongs to $FVB_2$, therefore, by the definition, the group $G(\beta)$ is the quotient of the group $F_4=\langle x_1,x_2,y_1,y_2\rangle$ by the relations $\theta(\beta)(x_i)=x_i$, $\theta(\beta)(y_i)=y_i$ for $i=1,2$, and the relation $y_1=y_2$.
$$
G(\beta)=\langle x_1,x_2,y_1,y_2~|~\theta(\beta)(x_1)=x_1, \theta(\beta)(x_2)=x_2, \theta(\beta)(y_1)=y_1, \theta(\beta)(y_1)=y_1, y_1=y_2\rangle
$$
Using the relation $y_1=y_2$, denoting by $y=y_1=y_2$ let us rewrite the group $G(\beta)$ in the following form
$$
G(\beta)=\langle x_1,x_2,y~|~\theta(\beta)(x_1)=x_1, \theta(\beta)(x_2)=x_2, \theta(\beta)(y)=y\rangle.
$$
From the direct calculations it follows that the automorphism $\theta(\beta)=\theta((\sigma_1\rho_1)^m)$ provided by the equality $y_1=y_2=y$ has the following form 
$$\theta(\sigma_1\rho_1)^m:\begin{cases}
x_1\mapsto x_1y^m,\\
x_2\mapsto x_2y^{-m},\\
y\mapsto y.
\end{cases}$$
Therefore, the group $G(\beta)$ can be rewritten in the following form
$$G(\beta)=\langle x_1,x_2, y~|~x_1y^m=x_1, x_2y^{-m}=x_2\rangle=\langle x_1,x_2,y~|~y^m=1\rangle=F_2*\mathbb{Z}^m.$$
From this equality it follows that the group invariant introduced in Theorem~\ref{lastinv} destinguishes the closures of the braids $(\rho_1\sigma_1)^m$ for different $m\geq 0$. In particular, it distinguishes the flat virtual Hopf link from the flat virtual trivial link with two components.

~\\

~\\
Valeriy Bardakov$^{*,\dag, \ddag}$ (bardakov@math.nsc.ru), 
Bogdan Chuzhinov$^*$ (b.chuzhinov@g.nsu.ru),\\
Ivan  Emel'yanenkov$^*$ (i.emelianenkov@g.nsu.ru),
Maxim Ivanov$^*$ (m.ivanov2@g.nsu.ru),\\
Elizaveta Markhinina$^*$ (e.markhinina@g.nsu.ru),Timur Nasybullov$^{*,\dag, \ddag}$ (timur.nasybullov@mail.ru),
Sergey Panov$^*$ (s.panov@g.nsu.ru), Nina Singh$^{\ae}$ (ninok.singh@gmail.com),\\
Sergey Vasyutkin$^*$  (s.vasyutkin@g.nsu.ru),
Valeriy Yakhin$^*$ (v.yakhin@g.nsu.ru),\\
 Andrei Vesnin$^{*,\dag, \ddag}$ (vesnin@math.nsc.ru)

~\\
$^*$ Novosibirsk State University, Pirogova 1, 630090 Novosibirsk, Russia,\\
$^{\dag}$ Tomsk State University, pr. Lenina 36, 634050 Tomsk, Russia,\\
$^{\ddag}$ Sobolev Institute of Mathematics, Acad. Koptyug avenue 4, 630090 Novosibirsk, Russia,\\
$^{\ae}$ Saint Petersburg State Electrotechnical University, ul. Professora Popova 5, 197376 Saint Petersburg, Russia.

\end{document}